\documentclass[11pt]{amsart}
\usepackage{amsmath}
\usepackage{amssymb}
\usepackage{caption}
\usepackage{subcaption}
\usepackage{float}
\usepackage[curve]{xypic}
\usepackage{cite}
\usepackage{enumitem}
\usepackage{color}
\usepackage{url}
\usepackage[usenames,dvipsnames]{xcolor}
\usepackage{graphicx}
\usepackage{verbatim}
\usepackage{tikz}
\usepackage[normalem]{ulem}
\usepackage{hyperref}

\theoremstyle{plain}
\newtheorem{Theorem}{Theorem}[section]

\newtheorem{Definition}[Theorem]{Definition}

\newtheorem{Lemma}[Theorem]{Lemma}
\newtheorem{Proposition}[Theorem]{Proposition}

\newtheorem{conj}[Theorem]{Conjecture}

\newtheorem{Remark}{Remark}

\usetikzlibrary{patterns,arrows.meta}
\usetikzlibrary{arrows.meta}
\usepackage{pgfplots}

\newcommand{\Tei}{\mathcal{T}}
\newcommand{\HP}{\mathbb{H}}
\newcommand{\C}{\mathbb{C}}
\newcommand{\Mod}{\operatorname{Mod}}
\newcommand{\Q}{\mathcal{Q}}

\newcommand{\TS}{\mathbf{S}}
\newcommand{\TL}{\mathbf{L}}
\newcommand{\TR}{\mathbf{R}}
\newcommand{\rot}{\mathbf{J}}

\usepackage{microtype}

\title[Carath\'{e}odory metric]{The Carath\'{e}odory metric on Teichm\"uller space of genus two surface}
\author[K. Lin and W. Su]{Kejie Lin and Weixu Su}
\address{Kejie Lin: School of Mathematics, Sun Yat-sen University, Guangzhou 510275, China} \email{linkj23@mail2.sysu.edu.cn}

\address{Weixu Su: School of Mathematics, Sun Yat-sen University, Guangzhou 510275, China}
\email{suwx9@mail.sysu.edu.cn}

\begin{document}
\maketitle

\begin{abstract}
   Let $\Tei_{g,n}$ be the Teichm\"uller space of Riemann surfaces of genus $g$
   with $n$ punctures. It is conjectured  that the Teichm\"uller and Carath\'{e}odory  metrics agree on a Teichm\"{u}ller disk  if and only if all the zeros of the corresponding holomorphic quadratic differential are of even order.  The conjecture was proved by Gekhtman and Markovic for $\Tei_{0,5}\cong \Tei_{1,2}$. We confirm the  conjecture  for $\Tei_{2,0}\cong\Tei_{0,6}$.
\end{abstract}

\section{Introduction}

Let $\Tei_{g,n}$ be the Teichm\"{u}ller space of Riemann surfaces of genus $g$ with $n$ marked points. 
Throughout this paper, we assume that $3g-3+n\geq 2$. 
The Teichm\"uller space $\Tei_{g,n}$ is the (orbifold) universal cover of the moduli space of Riemann surfaces
 $\mathcal{M}_{g,n}$ and  is naturally a complex manifold of dimension $3g-3+n$. It is known that
 $\Tei_{g,n}$ can be realized as a bounded domain in $\C^{3g-3+n}$, by the Bers
 embedding \cite{Bers}.

Let $\mathbb{H}$ be the upper half-plane, equipped with the Poincar\'e metric $$d_{\mathbb{H}}= \frac{|dz|}{2 \operatorname{Im} z}.$$
The \emph{Kobayashi metric} $d_{\mathcal{K}}$ on $\Tei_{g,n}$ is the largest metric so that every holomorphic map $$f: \mathbb{H}\rightarrow\Tei_{g,n}$$ is nonexpanding.
Royden \cite{Royden1969} proved that the Kobayashi metric on $\Tei_{g,n}$ coincides with the Teichm\"{u}ller metric $d_{\mathcal{T}}$. 
Furthermore,  the Teichm\"uller metric is not homogeneous at any point and its isometry group is essentially the mapping class group.

Another natural metric on $\Tei_{g,n}$ satisfying the Schwarz-Pick inequality is the \emph{Carath\'{e}odory metric}, which can be defined as the smallest metric so that every holomorphic map $$F:  \Tei_ {g,n} \rightarrow \mathbb{H}$$ is nonexpanding.
Let $d_\mathcal{C}$ denote the \emph{Carath\'{e}odory metric} on $\Tei_{g,n}$.
 By the Schwarz Lemma, the inequality 
$$d_\mathcal{C} \leq d_\Tei$$ holds  on $\Tei_{g,n}$. 

A longstanding open  problem is whether the two metrics $d_\Tei$ and $d_\mathcal{C}$ agree on $\Tei_{g,n}$. Recently, Markovic \cite{Markovic2018} solved the problem. 

Teichm\"uller disks, also known as complex geodesics, are holomorphic isometric
 embeddings of the hyperbolic plane into Teichm\"uller space. There is a unique Teichm\"uller disc through any pair of distinct points in $\Tei_{g,n}$. Every Teichm\"uller disk is generated by a non-zero holomorphic quadratic differential $\phi$, with at most simple poles at the marked points.  

Let $$\tau^\phi: \mathbb{H} \rightarrow \Tei_{g,n}$$ 
be a Teichm\"uller disk. 
Kra \cite{Kra1981} proved that the two metrics $d_\Tei$ and $d_\mathcal{C}$ agree on $\tau^\phi$ if all the zeros of $\phi$ are of even order, 
see also McMullen \cite[Theorem 4.1]{McMullen2003}. 
\begin{Remark}
 Note that if a holomorphic quadratic differential $\phi$ vanishes at a marked point (puncture), then it is not necessary to require that it vanish of even order.
\end{Remark}

Markovic \cite{Markovic2018} established a criterion that says when the two metrics $d_\Tei$ and $d_\mathcal{C}$ agree on a Teichm\"uller disk. 
Then he used this criterion to show that  $d_\Tei$ and $d_\mathcal{C}$ disagree on certain Teichm\"uller disk in $\Tei_{0,5}$.
Since there exists a holomorphic and isometric embedding of $\Tei_{0,5}$  into $\Tei_{g,n}$, 
we have  $d_\Tei\neq d_\mathcal{C}$ on $\Tei_{g,n}$.

  An application of Markovic's result is to prove a folklore conjecture of Siu that the Teichm\"uller space is not biholomorphic to any bounded convex domain in $\C^{3g-3+n}$.  Note that Gupta and Seshadri 
 \cite[Theorem 1.1.]{GS2020} proved a related result that the Teichm\"uller space is not biholomorphically equivalent to any bounded domain in $\C^{3g-3+n}$ which is strictly locally convex at one boundary point. 

\medskip
 
A remaining problem is to classify Teichm\"{u}ller disks for which $d_\Tei = d_\mathcal{C}$.
This is equivalent  to classifying Teichm\"{u}ller disks that are holomorphic retracts of the Teichm\"uller space. Gekhtman and Markovic   \cite{GM2020} made the following conjecture.

\begin{conj}\label{conj:equl}
A Teichm\"uller disk  is a holomorphic retract if and only if it is generated by
a holomorphic quadratic differential all of whose zeros are of even order.
\end{conj}

Gekhtman and Markovic \cite[Theorem 1.2]{GM2020} proved the above conjecture for $\Tei_{0,5}\cong\Tei_{1,2}$. 
In this paper, we confirm the conjecture for $\Tei_{0,6}\cong\Tei_{2,0}$,
which is of complex dimension $3$.

\begin{Theorem}\label{thm:main}
Let $\tau^{\phi}$ be a Teichm\"{u}ller disk in $ \Tei_{2,0}$.
    The Teichm\"uller metric and Carath\'{e}odory metric agree on $\tau^{\phi}$  if and only if  $\phi$ is a quadratic differential all of whose zeros are of even order.  
 \end{Theorem}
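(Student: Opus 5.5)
The "if" direction is Kra's theorem (see also McMullen \cite[Theorem 4.1]{McMullen2003}), so only the converse needs proof. Suppose $\phi$ on a genus two surface $X$ has a zero of odd order; we must show $d_\mathcal{C}<d_\Tei$ somewhere on $\tau^\phi$. In genus two, $\phi$ has total zero order $4$, so the possible strata are $(1,1,1,1)$, $(2,1,1)$, $(3,1)$ and $(2,2)$, $(4)$. The last two are excluded by hypothesis. In each of the first three, $\phi$ has at least two odd zeros.

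\textbf{Transfer to $\Tei_{0,6}$.} Use the hyperelliptic involution $\iota$ and the isomorphism $\Tei_{2,0}\cong\Tei_{0,6}$ given by the quotient $\pi: X\to X/\iota=\hat{\C}$, branched over the six Weierstrass points. Every quadratic differential on $X$ is $\iota$-invariant, since $\dim Q(X)=3=\dim Q(\hat{\C},6\text{ pts})$. Hence $\phi=\pi^*q$, where $q$ is meromorphic on the sphere with at most simple poles at the six branch points.

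Translate zero orders through the covering. A zero of order $k$ of $q$ at a non-branch point lifts to two zeros of order $k$. At a branch point where $q$ has order $m\in\{-1,0,1,\dots\}$, $\phi$ has order $2m+2$. So the odd zeros of $\phi$ come exactly from odd-order zeros of $q$ away from the marked points. Since the identification is a biholomorphism carrying Teichmüller disks to Teichmüller disks, it suffices to show: if $q$ on $\hat{\C}\setminus\{6\text{ pts}\}$ has an odd-order zero at an unmarked point, then $\tau^q$ is not a holomorphic retract.

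\textbf{Apply Markovic's criterion.} Markovic's criterion \cite{Markovic2018}, as used in \cite{GM2020}, states the following. If $\tau^q$ is a holomorphic retract, then for certain pairs of Jenkins–Strebel differentials $q_1,q_2$ and all $s,t$ related to $q$, a specific inequality must hold. This is an identity of the form
$$\Big|\int \frac{q\,|q_1|\,|q_2|\dots}{\dots}\Big|\ \text{versus}\ \int |q|\,\dots$$
built from the Reich–Strebel/Gardiner formula for the derivative of extremal length, applied to $q=\lambda_1 q_1+\lambda_2 q_2$-type decompositions. The strategy of \cite{GM2020} for $\Tei_{0,5}$ runs as follows. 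One writes $q$ (up to the $SL_2(\mathbb{R})$ action on the disk) as a limit of, or directly as, a combination of two Jenkins–Strebel differentials $q_1,q_2$ with one cylinder each, or uses a density argument over such $q$ in the stratum. One then shows that the retract condition forces an identity of the form $|q_1+ t q_2| = |q_1|+t|q_2|$ a.e. on a positive-measure set. That identity fails unless the horizontal foliations are compatible in a way possible only when all zeros are even.

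The plan is to reproduce this in $\Tei_{0,6}$ by reduction to the five-point case where possible. Stratum $(3,1)$ corresponds to $q$ with zeros of orders $3,1$? No: by the count $\deg q=-4$ with six simple poles allowed, $q$ has total unmarked zero order plus marked orders equal to $2$. So the three strata become:
\begin{itemize}
\item[(a)] $q$ with two simple zeros at unmarked points;
\item[(b)] $q$ with one simple zero unmarked plus a simple zero at a marked point (order $1$ at a branch point gives order $4$ upstairs, which does not occur), which in fact reduces to one unmarked simple zero and a marked point with $m=0$;
\item[(c)] $q$ with a double zero.
\end{itemize}
Recompute carefully: $\phi$ odd zeros $\Leftrightarrow$ $q$ has an unmarked zero of odd order. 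The cases reduce to $q$ having either two unmarked simple zeros, or one unmarked simple zero together with the forgetful-type degeneration where $q$ is regular at one marked point. In the latter case $q$ is the pullback of a differential on $\Tei_{0,5}$ under the forgetful map. We then use the holomorphic isometric embedding $\Tei_{0,5}\to\Tei_{0,6}$ together with the forgetful holomorphic map $\Tei_{0,6}\to\Tei_{0,5}$. A retraction onto $\tau^q\subset\Tei_{0,6}$ composed with the embedding would give a retraction in $\Tei_{0,5}$, contradicting \cite[Theorem 1.2]{GM2020}.

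\textbf{Main obstacle.} The hard case is the generic one: $q$ with six simple poles and two unmarked simple zeros, which does not come from $\Tei_{0,5}$. Here I would run Markovic's criterion directly. Approximate $q$ in its stratum by Jenkins–Strebel differentials $q_1,q_2$ whose horizontal foliations fill. Pull everything back to the Euclidean square-tiled/pillowcase model. Then show that the retract condition forces $\operatorname{Re}$-type equalities of $L^1$ norms that fail near a simple zero, where the angle $3\pi$ makes the local foliations incompatible with a global sign choice of $\sqrt{q}$. This last step requires an explicit construction of $q_1,q_2$ and a nonvanishing computation, and it is where I expect the real work.
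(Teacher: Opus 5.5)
Your ``if'' direction (Kra) and the transfer to $\Tei_{0,6}$ are fine. You also arrive at the right two cases, $q\in\Q(1^2,-1^6)$ and $q\in\Q(1,-1^5)$ with the zero at an unmarked point. Incidentally, $(3,1)$ is empty in genus two: an $\iota$-invariant $\phi$ has even order at Weierstrass points, and its other zeros come in $\iota$-pairs of equal order. Your case (c) gives $(2,2)$, so it is not needed.

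\textbf{First gap: the $\Q(1,-1^5)$ case.} Your reduction to \cite[Theorem 1.2]{GM2020} runs in the wrong direction. Let $q'$ be the same differential on the five-pointed sphere. The forgetful map $\pi\colon\Tei_{0,6}\to\Tei_{0,5}$ satisfies $\pi\circ\tau^{q}=\tau^{q'}$, so $G\mapsto G\circ\pi$ turns a retraction of $\tau^{q'}$ into one of $\tau^{q}$. That is the implication you do not need. To turn a retraction $F$ of $\tau^q$ into one of $\tau^{q'}$, you need a holomorphic $E\colon\Tei_{0,5}\to\Tei_{0,6}$ with $E\circ\tau^{q'}=\tau^{q}$, i.e.\ a holomorphic extension over all of $\Tei_{0,5}$ of the motion of the sixth point along the disk. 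You do not supply one.
\begin{itemize}
\item The natural candidate fails. The covering-induced isometric embedding $\Tei_{0,5}\cong\Tei_{1,2}\hookrightarrow\Tei_{2,0}\cong\Tei_{0,6}$, via the double cover of the torus branched at its two marked points, sends $\tau^{q'}$ to the disk of a differential in $\Q(1^2,-1^6)$, not to $\tau^q$.
\item $E$ cannot be a section of the forgetful map either, since such holomorphic sections do not exist (Earle--Kra).
\end{itemize}
This is why the paper treats this case directly as Case (III-1). It regards the regular marked point as a singularity of order $0$. It then uses the $\HP^3$-action and $\rot$ to normalize $\phi$ to an $L$-shaped pillowcase with the extra point on the bottom edge, and reruns Markovic's Schwarz--Christoffel computation there.

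\textbf{Second gap: the generic case $\Q(1^2,-1^6)$.} This case, which is the heart of the theorem, is not proved, and the mechanism you sketch is not one that works.

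The orbit-closure logic is inverted. Proposition~\ref{lem:GM-orbit} transfers the retract property \emph{from} $\phi$ \emph{to} its orbit closure. So you must exhibit a non-retract inside $\overline{\operatorname{GL}_2^+(\mathbb{R})\cdot\phi}$; approximating $\phi$ by Jenkins--Strebel differentials proves nothing. The paper proceeds as follows:
\begin{itemize}
\item Theorem~\ref{thm:SW} gives a Jenkins--Strebel differential with three cylinders in the orbit closure.
\item The case analysis of Section~\ref{sec:classify} (cylinder shears, height changes, $\rot$, and repeated use of Theorem~\ref{thm:SW}) reaches a staircase, Proposition~\ref{prop:staircase}.
\end{itemize}

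The obstruction is also not a pointwise $L^1$ identity such as $|q_1+tq_2|=|q_1|+t|q_2|$, nor a sign incompatibility of $\sqrt{q}$ near a zero. In the paper, the contradiction is a failure of smoothness:
\begin{itemize}
\item By Theorem~\ref{thm:GM-JS}, a retraction yields a holomorphic $F$ equal to the area of $L(a,b,c,p_0,q_0)$ (Lemma~\ref{Lemma 4.2}).
\item The real-analytic family $\TR(x,y)=S(a_0,0,0,p_0-x,q_0-y)$ is re-expressed as staircases $\TS(s,t)$ with $p(s,t)=p_0$ and $q(s,t)=q_0$.
\item The Schwarz--Christoffel parameters of these staircases have $t\ln\frac1t$ and $s\ln\frac1s$ expansions.
\item This forces $F(\TR(x,y))$ to contain terms like $y^2/\ln\frac1y$, which contradicts the smoothness of $F$.
\end{itemize}

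Until you supply such a reduction and an explicit computation of this kind, the ``only if'' direction is not established.
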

 
Equivalently, we show

\begin{Theorem}\label{thm:main2}
Let $\tau^{\phi}$ be a Teichm\"{u}ller disk in $\Tei_{0,6}$.
    The Teichm\"uller metric and Carath\'{e}odory metric agree on $\tau^{\phi}$  if and only if  either $\phi$ is a quadratic differential with no odd-order zeros, or $\phi$ is a quadratic differential with a simple zero located at a marked point.    
 \end{Theorem}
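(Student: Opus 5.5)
The plan is to deduce Theorem~\ref{thm:main2} from Theorem~\ref{thm:main} through the hyperelliptic correspondence $\Tei_{2,0}\cong\Tei_{0,6}$, and to track how quadratic differentials and their zeros transform under it.

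Every genus two surface $X$ is hyperelliptic, with a canonical involution $\iota$ and quotient map $\pi: X\to \hat{\C}$ branched over six points. Sending $X$ to $(\hat{\C},\text{branch points})$ gives a biholomorphism $\Tei_{2,0}\to\Tei_{0,6}$. It is an isometry for the Teichm\"uller metric, because the extremal Beltrami differential is $\iota$-invariant. Since the Carath\'eodory metric is a biholomorphic invariant, it is also preserved.

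Teichm\"uller disks correspond as follows. Every holomorphic quadratic differential on $X$ is $\iota$-invariant: the space is $3$-dimensional, and it is spanned by products of the abelian differentials, which are anti-invariant. Hence each such $\phi$ descends to a meromorphic quadratic differential $q$ on $\hat{\C}$ with $\phi=\pi^*q$, and $q$ has at most simple poles at the six marked points. The disk $\tau^\phi$ maps onto $\tau^q$. So Theorem~\ref{thm:main} gives: $d_\Tei=d_{\mathcal C}$ on $\tau^q$ if and only if all zeros of $\pi^*q$ have even order.

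It remains to translate this parity condition through a local computation of $\pi^*q$.
\begin{itemize}
\item At an unbranched point, $\pi$ is a local biholomorphism, so a zero of $q$ of order $k$ pulls back to zeros of order $k$.
\item At a branch point, $\pi$ is $w=z^2$. If $q=w^m\,dw^2$ with $m\ge -1$, then
\[
\pi^*q = z^{2m}\,(2z\,dz)^2 = 4z^{2m+2}\,dz^2 .
\]
The order $2m+2$ is always even, and it is $\ge 0$, so $\phi$ is holomorphic there.
\end{itemize}
Consequently, $\pi^*q$ has only even-order zeros exactly when $q$ has no odd-order zero away from the marked points. Zeros of $q$ at marked points, of any order, impose no condition.

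Now compare with the statement. In $\Tei_{0,6}$ we have $\deg q=-4$ and at most six simple poles, so $q$ has $p-4\le 2$ zeros counted with multiplicity, where $p$ is the number of poles. Its zeros are therefore: none; one double zero; two simple zeros; or one simple zero, in which case $p=5$ and the sixth marked point is a regular point.
\begin{itemize}
\item Two simple zeros that are both unmarked give an odd-order zero for $\phi$.
\item Two simple zeros that are both marked are impossible, because then at most four poles remain and the degree count fails.
\item So the two-simple-zero case reduces to exactly one simple zero located at a marked point together with one odd zero elsewhere. The latter is again excluded: with $p$ poles, $p-4=2$ forces $p=6$, so no marked point is free to carry a zero.
\end{itemize}

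Redoing the count carefully, the good cases are: no odd-order zeros off the marked points, or a single simple zero sitting at a marked point (with $p=5$). This matches the dichotomy in Theorem~\ref{thm:main2}. A zero of $q$ at a marked point means the point is not a pole; the order $m\ge 1$ then gives $2m+2$, still even.

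The equivalence is then immediate from Theorem~\ref{thm:main}. The main obstacle is not this translation but Theorem~\ref{thm:main} itself. I would prove it in the $\Tei_{0,6}$ picture: Kra's theorem gives the even-zero direction, and for the converse I would apply Markovic's criterion and the Gekhtman--Markovic method to the finitely many zero configurations listed above.
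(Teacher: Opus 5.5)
\textbf{There is a genuine gap: the hard direction is not proved, and the route you chose is circular.}

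Your translation between the two formulations is correct. The local computation $\pi^*(w^m\,dw^2)=4z^{2m+2}\,dz^2$ and the degree count $p-4\le 2$ are exactly what the paper uses, in the opposite direction, in Cases (II), (III-2), (IV) and in the Conclusion of Section 3. Your ``if'' direction via Kra's theorem also matches the paper.

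The problem is the logic. In the paper, Theorem~\ref{thm:main} is \emph{deduced from} Theorem~\ref{thm:main2}, so invoking it to prove Theorem~\ref{thm:main2} is circular. Your last paragraph concedes this: you would prove Theorem~\ref{thm:main} ``in the $\Tei_{0,6}$ picture'', which means proving Theorem~\ref{thm:main2}. What you have written is therefore only the equivalence of the two statements plus the easy direction. The entire ``only if'' direction is missing: that no disk generated by $\phi\in\Q(1^2,-1^6)$, or by $\phi\in\Q(1,-1^5)$ with the zero off the marked points, is a holomorphic retract.

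That direction is not a routine application of Markovic's criterion to ``finitely many zero configurations''. Each stratum contains a continuum of differentials, and the paper needs three substantial steps:
\begin{enumerate}
\item[(i)] It uses Proposition~\ref{lem:GM-orbit} and Theorem~\ref{thm:SW} to pass to Jenkins--Strebel differentials with as many cylinders as possible.
\item[(ii)] It runs a case analysis over all horizontal critical graphs in $\Q(1^2,-1^6)$. Using $\mathbb{H}^k$ cylinder deformations and the rotation $\rot$, it shows every such orbit closure contains a staircase (Proposition~\ref{prop:staircase}). The $\Q(1,-1^5)$ case is reduced separately to an $L$-shaped pillowcase with an extra marked point.
\item[(iii)] It proves Theorem~\ref{thm:typeA} for staircases. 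Assuming a retraction, Theorem~\ref{thm:GM-JS} forces $F(S(a,b,c,p_0,q_0))=a+b(p_0+q_0)+cp_0$, i.e.\ $F$ is the area of the staircase. A Schwarz--Christoffel two-parameter family $\TS(s,t)$ then represents the real-analytic family $\TR(x,y)$. The side lengths have expansions with $t\ln\frac1t$ and $s\ln\frac1s$ terms. Hence the area contains $x^2/\ln\frac1x$ and $(x+y)^2/\ln\frac{1}{x+y}$ terms, contradicting the smoothness of $F\circ\TR$.
\end{enumerate}
None of (i)--(iii) appears in your proposal. Step (iii) in particular is the new input, and it cannot be supplied by citing the Gekhtman--Markovic result for $\Tei_{0,5}$.
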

 
The proof of Theorem \ref{thm:main2}  follows the strategy developed by Gekhtman and Markovic \cite{Markovic2018, GM2020}. 
Due to Markovic's criterion, it suffices to classify Jenkins-Strebel differentials 
with at least one odd-order zero. In Section \ref{sec:classify}, by applying cylinder deformations or by interchanging the vertical and horizontal foliations if necessary, we prove  that Theorem \ref{thm:main2} can be reduced to  the case when $\phi$ is a staircase.

Assume that $\phi$ is a staircase surface and that the corresponding Teichm\"uller disk $\tau^{\phi}$
admits a holomoprhic retraction, say
$F: \mathcal{T}_{0,6} \to \mathbb{H}$. 
In Section \ref{sec:computation}, we use the Schwarz-Christoffel formula to compute $F$ on certain smooth paths of $\mathcal{T}_{0,6}$. The computation is explicit and it shows that  $F$ fails to be $C^2$-smooth, which leads to a contradiction. 
Our computational method generalizes Markovic's previous work on $L$-shaped pillowcases \cite{Markovic2018}.

\begin{Remark}
By a result of Apisa and Wright \cite[Corollary 1.3]{AW2022},
Conjecture \ref{conj:equl} can be reduced to checking strata (and full loci of covers)
of genus zero quadratic differentials.
\end{Remark}

\subsection*{Acknowledgements} The authors are grateful to Lixin Liu for for his valuable suggestions. W. Su is partially
 supported by NSFC Grant No. 12371076.

\section{Teichm\"{u}ller disks and Orbit closures}

In this section, we introduce the notion of Teichm\"uller disk.
Then we recall the method of  Gekhtman and Markovic \cite{GM2020},
which reduces the classification of Teichm\"uller disks  to the case of Jenkins-Strebel differentials. 
Most of the  relevant materials can be found in \cite{GM2020}. 
See Gupta \cite{Gupta2020} for a survey on complex analytic aspect of Teichm\"uller space. 
For general background on Teichm\"uller theory and quadratic differentials, we refer to 
Hubbard \cite{Hubbard} and Strebel \cite{Strebel}.

\subsection{Teichm\"{u}ller disks and holomorphic retracts}
Let $S_{g,n}$ be an oriented surface of genus $g$
with $n$ marked points (punctures), where $3g-3+n\geq 2$. Let $\Tei_{g,n}$ be the Teichm\"{u}ller space of Riemann surfaces marked by $S_{g,n}$.
Let $\Mod_{g,n}$ be the mapping class group of $S_{g,n}$. The quotient $$\mathcal{M}_{g,n}=\Tei_{g,n}/\Mod_{g,n}$$ is the moduli space of Riemann surfaces. Let
 $$\pi:\Tei_{g,n}\rightarrow \mathcal{M}_{g,n}$$ be the natural projection.

Given $X\in \Tei_{g,n}$, a \emph{holomorphic quadratic differential} $\phi$ on $X$ is a $(2,0)$ tensor
locally given by 
$\phi=\phi(z) dz^2,$
where $\phi(z)$ is a holomorphic function. 
Let $Q(X)$ be the space of holomorphic quadratic differentials $\phi$ on $X$ such that the $L^1$-norm
$$\|\phi\| = \int_X |\phi|$$
is finite. Note that $\|\phi\|<\infty$ 
 if and only if $\phi$ has at most simple poles at the punctures. 
Any $\phi\in Q(X)$ 
induces a flat metric of finite area, which is also called a \emph{half-translation surface}. 

It is known that the  cotangent space of $\Tei_{g,n}$ at $X$
can be naturally identified with $Q(X)$ and the $L^1$-norm is dual to the Teichm\"uller norm. 
We denote the cotangent bundle by $Q\Tei_{g,n}$. 
A pair $(X,\phi)\in Q\Tei_{g,n}$ with $\phi\in Q(X)$ and $\phi\neq 0$ generates a holomorphic embedding
$$\tau^\phi: \HP \rightarrow \Tei_{g,n},$$ 
which is an isometry from the Poincar\'e metric on $\HP$
to the Teichm\"uller metric on $\Tei_{g,n}$. For each $\lambda\in \mathbb{H}$, 
the Riemann surface $X_\lambda= \tau^\phi(\lambda)$ is characterized by the property that the extremal quasiconformal map between $X$ and $X_\lambda$ has Beltrami differential 
$$\mu_\lambda = \left( \frac{i-\lambda}{i+\lambda}  \right) \frac{|\phi|}{\phi}.$$

In the natural coordinates associated with $\phi$, the extremal quasiconformal map is 
an affine map of the form $$z=x+iy \mapsto  x+\lambda y.$$ 
By abuse of notation, we write $$\tau^{\phi}(\lambda)=\left( \frac{i-\lambda}{i+\lambda}  \right) \frac{|\phi|}{\phi}.$$ 
We call $\tau^{\phi}$ the \emph{Teichm\"{u}ller disk} generated by $\phi$.
Note that $\tau^\phi\left( i \cdot \mathbb{R}_+ \right)$ is the Teichm\"uller geodesic
in the direction of $\phi$.

\begin{Definition}[Holomorphic retract]
  Let $\tau^\phi: \HP \rightarrow \Tei_{g,n}$ be a Teichm\"uller disk. 
We say that $\tau^{\phi}$ is a \emph{holomorphic retract} of $\Tei_{g,n}$ if 
there exists a holomorphic map $F : \Tei_{g,n} \to \HP$ such that $$F\circ \tau^\phi=\mathrm{id}_\HP.$$
Any holomorphic map  $F : \Tei_{g,n} \to \HP$ satisfies the above property is called a holomorphic retraction of $\tau^\phi$.
\end{Definition}

The following lemma states that classifying Teichm\"uller disks for which $d_\Tei = d_\mathcal{C}$ is equivalent to classifying holomorphic retracts of $\Tei_{g,n}$.
See \cite[Lemma 1.3]{GM2020}.
\begin{Lemma}
 The Kobayashi and Carath\'{e}odory metrics agree on a Teichm\"{u}ller disk $\tau^{\phi}$ if and only if $\tau^{\phi}$ is a holomorphic retract of $\Tei_{g,n}$.
\end{Lemma}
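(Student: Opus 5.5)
The plan is to prove the two implications separately. Both rest on three facts: $\tau^\phi$ is an isometric embedding of $(\HP, d_{\HP})$ into $(\Tei_{g,n}, d_\Tei)$; $d_\Tei$ equals the Kobayashi metric (Royden); and the Carath\'eodory metric of $\HP$ is its Poincar\'e metric. We may use either the distance or the infinitesimal versions of the metrics; I will work with distances $d_\mathcal{C}(\tau^\phi(\lambda_1),\tau^\phi(\lambda_2))$ along the disk.

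\emph{Retract implies agreement.} Suppose $F:\Tei_{g,n}\to\HP$ is holomorphic with $F\circ\tau^\phi=\mathrm{id}_\HP$. Fix $\lambda_1,\lambda_2\in\HP$. Since $F$ is one of the competitors in the definition of $d_\mathcal{C}$,
\[
d_\mathcal{C}(\tau^\phi(\lambda_1),\tau^\phi(\lambda_2)) \ge d_\HP(F\tau^\phi(\lambda_1),F\tau^\phi(\lambda_2)) = d_\HP(\lambda_1,\lambda_2) = d_\Tei(\tau^\phi(\lambda_1),\tau^\phi(\lambda_2)).
\]
Together with $d_\mathcal{C}\le d_\Tei$, this gives equality on $\tau^\phi$.

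\emph{Agreement implies retract.} This direction is the main obstacle, because we must produce a single holomorphic $F$ realizing the supremum for all pairs simultaneously. The argument proceeds as follows.
\begin{enumerate}
\item Fix two distinct points, say $\tau^\phi(i)$ and $\tau^\phi(\lambda_0)$. By definition of $d_\mathcal{C}$ there are holomorphic $F_k:\Tei_{g,n}\to\HP$ with $d_\HP(F_k\tau^\phi(i),F_k\tau^\phi(\lambda_0))\to d_\HP(i,\lambda_0)$. Post-composing with M\"obius automorphisms of $\HP$, normalize so that $F_k(\tau^\phi(i))=i$.
\item By Montel's theorem, since $\HP$ is biholomorphic to the disk and the family is uniformly bounded after that identification, pass to a subsequence converging locally uniformly to a holomorphic limit $F$. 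The normalization $F(\tau^\phi(i))=i$ keeps the image off the boundary, so $F$ maps into $\HP$.
\item The composition $g=F\circ\tau^\phi:\HP\to\HP$ is holomorphic. It fixes $i$ and preserves the distance between $i$ and $\lambda_0$. By the equality case of the Schwarz--Pick lemma, $g$ is an automorphism of $\HP$.
\item Set $\tilde F=g^{-1}\circ F$. Then $\tilde F$ is a holomorphic retraction of $\tau^\phi$.
\end{enumerate}
If one prefers the infinitesimal version of the metrics, the same argument works with the derivative at a single point and equality in Schwarz's lemma at that point.

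The delicate point is step 2: one must check that the normalized limit neither degenerates to a constant boundary value nor loses the extremal property. The normalization handles the first issue. The second follows because the distance condition passes to the limit by locally uniform convergence.
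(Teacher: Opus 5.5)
Your proof is correct. The forward direction is the Schwarz-lemma comparison. The converse is the standard normal-families extraction of an extremal $F$, followed by the equality case of Schwarz--Pick, which is the usual argument for the fact the paper simply quotes from \cite[Lemma 1.3]{GM2020}; the paper gives no proof of its own. Your converse only uses equality of $d_\mathcal{C}$ and $d_\Tei$ at one pair of distinct points of the disk (or at one tangent vector), so it actually proves a slightly stronger statement.
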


\subsection{Orbit closures}

There is a natural $\operatorname{GL}_2^+(\mathbb{R})$-action on $Q\Tei_{g,n}$ 
that commutes with 
the action of $\operatorname{Mod}_{g,n}$.  One can define a quadratic differential
$\phi$ by gluing a polygon $P$ along sides in pairs, in a way that each side of $P$
belongs to exactly one pair and the two sides in each pair are parallel and of the
same length. If $A\in \operatorname{GL}_2^+(\mathbb{R})$, then $A$ acts on $P$
as an affine transformation. By definition,  $A \cdot \phi$ is the quadratic differential obtained by gluing $A\cdot P$ in the same pattern as that for $P$. 

Let $p: Q\Tei_{g,n} \to \Tei_{g,n}$ be the natural projection. 
For any $(X,\phi) \in  Q\Tei_{g,n}$, the image of $A \cdot (X,\phi)$ under $p$
remains the same if and only if we multiply $A$ by $\mathbb{C}^*=\mathbb{C}\setminus\{0\}$. 
Note that $\mathbb{H} \cong  \mathbb{C}^* \backslash \operatorname{GL}_2^+(\mathbb{R})$. In fact, we can identify $\mathbb{H}$
as a subgroup of $\operatorname{GL}_2^+(\mathbb{R})$:

$$\mathbb{H} \cong\left\{\begin{pmatrix}
    1 & \operatorname{Re} \lambda\\
    0 & \operatorname{Im} \lambda
\end{pmatrix} \ : \  \lambda\in \mathbb{H} \right\}.$$

In complex coordinates, the action of $\begin{pmatrix}
    1 & \operatorname{Re} \lambda\\
    0 & \operatorname{Im} \lambda
\end{pmatrix}$ is given by 
$$x+i y \mapsto x + \lambda y.$$
So the Beltrami differential is $$\left( \frac{i-\lambda}{i+\lambda}  \right) \frac{|\phi|}{\phi},$$
which coincides with $\tau^\phi(\lambda)$ defined before.

Let $Q\mathcal{M}_{g,n} = Q\Tei_{g,n} / \Mod_{g,n}$ be the moduli space of holomorphic quadratic differentials. 
We also use $\pi$ to denote the natural projection $ Q\Tei_{g,n} \to Q\mathcal{M}_{g,n}$. The $\operatorname{GL}_2^+(\mathbb{R})$-action on $Q\Tei_{g,n}$ 
induces an action of $\operatorname{GL}_2^+(\mathbb{R})$ on $Q\mathcal{M}_{g,n}$.
Given $(X,\phi)\in Q\mathcal{M}_{g,n}$, we denote by $\operatorname{GL}_2^+(\mathbb{R}) \cdot \phi$ the $\operatorname{GL}_2^+(\mathbb{R})$ orbit of $(X,\phi)$. 
Let $\overline{\operatorname{GL}_2^+(\mathbb{R}) \cdot \phi}$ be the orbit closure in a stratum of $Q\mathcal{M}_{g,n}$.

The next result is very useful, proved by Gekhtman and Markovic \cite[Lemma 2.2]{GM2020}.

\begin{Proposition}\label{lem:GM-orbit}
Let $\phi$ be a quadratic differential. 
    If $\phi$ generates a Teichm\"uller disk $\tau^\phi$ that is a holomorphic retract of $\mathcal{T}_{g,n}$, so does every  element in the  orbit closure  $\overline{\operatorname{GL}_2^+(\mathbb{R}) \cdot \phi}$.
\end{Proposition}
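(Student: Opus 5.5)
We argue in two stages: first invariance of the retract property under the $\operatorname{GL}_2^+(\mathbb{R})$-action (and the mapping class group), then closedness under limits via a normal families argument.

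\emph{Invariance along the orbit.} For $A\in\operatorname{GL}_2^+(\mathbb{R})$, the differential $A\cdot\phi$ generates the same Teichm\"uller disk as $\phi$, up to a M\"obius reparametrization of $\mathbb{H}$. Indeed, the image $p(\operatorname{GL}_2^+(\mathbb{R})\cdot(X,\phi))$ is exactly $\tau^\phi(\mathbb{H})$ via the identification $\mathbb{H}\cong\mathbb{C}^*\backslash \operatorname{GL}_2^+(\mathbb{R})$. Hence $\tau^{A\cdot\phi}=\tau^\phi\circ M_A$ for some holomorphic automorphism $M_A$ of $\mathbb{H}$. If $F$ is a retraction for $\tau^\phi$, then $M_A^{-1}\circ F$ is a retraction for $\tau^{A\cdot\phi}$.

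The mapping class group acts on $\Tei_{g,n}$ by biholomorphisms. If $h\in\Mod_{g,n}$, then $F\circ h^{-1}$ retracts onto $\tau^{h\cdot\phi}=h\circ\tau^\phi$. So the retract property descends to points of $Q\mathcal{M}_{g,n}$ and holds on the entire orbit $\operatorname{GL}_2^+(\mathbb{R})\cdot\phi$.

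\emph{Passing to the closure.} Let $\psi$ lie in the orbit closure. Choose $\phi_k$ in the orbit with $\pi(\phi_k)\to\pi(\psi)$ in the stratum. Lifting and composing with mapping classes, we may assume $(X_k,\phi_k)\to(Y,\psi)$ in $Q\Tei_{g,n}$. Let $F_k:\Tei_{g,n}\to\mathbb{H}$ be retractions with $F_k\circ\tau^{\phi_k}=\mathrm{id}$.

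Compose with the Cayley map $\mathbb{H}\to\mathbb{D}$ to get maps $G_k$ into the unit disk. These form a uniformly bounded family of holomorphic maps on the complex manifold $\Tei_{g,n}$, so by Montel's theorem a subsequence converges locally uniformly to a holomorphic $G:\Tei_{g,n}\to\overline{\mathbb{D}}$. By the maximum principle, either $G$ maps into $\mathbb{D}$ or $G$ is a constant of modulus one.

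Next we use continuity of the disks. The map $(X,\phi,\lambda)\mapsto\tau^{\phi}(\lambda)$ is continuous on $Q\Tei_{g,n}\times\mathbb{H}$: the Beltrami differential $\mu_\lambda$ depends continuously on $\phi$, and the Teichm\"uller projection is continuous. Hence $\tau^{\phi_k}\to\tau^\psi$ locally uniformly on $\mathbb{H}$. Combining this with the local uniform convergence $G_k\to G$, we obtain, for each $\lambda$,
\[
G(\tau^\psi(\lambda))=\lim_k G_k(\tau^{\phi_k}(\lambda))=C(\lambda),
\]
where $C$ is the Cayley map. This excludes the constant boundary case, and the Cayley inverse of $G$ is the desired retraction for $\tau^\psi$.

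\emph{Main obstacle.} The delicate step is normalizing the lifts so that $(X_k,\phi_k)$ genuinely converge in $Q\Tei_{g,n}$. The retractions $F_k$ must then be transported by the corresponding mapping classes, which is legitimate by the invariance established in the first stage. The other point requiring care is the continuity of $\tau^\phi$ in $\phi$, together with the equicontinuity needed for the diagonal limit; the Schwarz--Pick inequality for the $F_k$ in the Kobayashi (Teichm\"uller) metric supplies this equicontinuity.
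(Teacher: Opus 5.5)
The paper gives no argument of its own here; it quotes the result from Gekhtman--Markovic \cite[Lemma 2.2]{GM2020}. Your proof is correct and self-contained, and it is the natural argument. You first show the retract property is invariant under $\operatorname{GL}_2^+(\mathbb{R})$ (the disks $\tau^{A\cdot\phi}$ and $\tau^\phi$ differ by a M\"obius reparametrization of $\mathbb{H}$) and under $\Mod_{g,n}$. You then take a Montel limit of the retractions, and the identity $G\circ\tau^\psi=C$ excludes a unimodular constant limit.

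One justification should be replaced. The convergence $\tau^{\phi_k}(\lambda)\to\tau^\psi(\lambda)$ is true, but not for the reason you state. First, $\mu_\lambda$ lives on the varying surfaces $X_k$. Second, even on a fixed surface the map $\phi\mapsto|\phi|/\phi$ is not continuous into $L^\infty$: near a zero of $\psi$ that moves, $|\phi_k|/\phi_k$ and $|\psi|/\psi$ differ by a definite amount on a set of small positive measure. So $L^\infty$-continuity of the Teichm\"uller projection does not apply.

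The cleanest fix stays within the paper's setup. Write $\tau^\phi(\lambda)=p(B_\lambda\cdot\phi)$ with $B_\lambda=\begin{pmatrix}1&\operatorname{Re}\lambda\\0&\operatorname{Im}\lambda\end{pmatrix}$. The $\phi_k$ and $\psi$ lie in a common stratum, on which the $\operatorname{GL}_2^+(\mathbb{R})$-action is continuous (linear in period coordinates). Hence $B_\lambda\cdot\phi_k\to B_\lambda\cdot\psi$, and therefore $\tau^{\phi_k}(\lambda)\to\tau^\psi(\lambda)$. Alternatively, transport to a common base surface and use continuity of normalized solutions of the Beltrami equation under uniformly bounded a.e.\ convergence.

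Also, the equicontinuity you attribute to Schwarz--Pick is not needed. Your diagonal limit uses only the locally uniform convergence $G_k\to G$ from Montel together with the pointwise convergence $\tau^{\phi_k}(\lambda)\to\tau^\psi(\lambda)$.
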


Let 
$ \kappa =(\kappa_1, \cdots, \kappa_m)$ be an integral vector where $\kappa_i \geq -1$, $i = 1,...,m$, and satisfy $\sum\limits_{i=1}^{m} \kappa_i =4g-4.$ 
Denote by $\mathcal{Q}(\kappa)$ the stratum of quadratic differentials $(X,\phi)$ in
$Q\mathcal{M}_{g,n}$ consisting of $\phi$ with $m$ distinct zeros or poles of multiplicities $\kappa_1, \cdots, \kappa_m$.

The horocycle flow action $h_t : \mathcal{Q}(\kappa) \rightarrow \mathcal{Q}(\kappa)$ 
is defined  as the restriction of the $\operatorname{GL}_2^+(\mathbb{R})$-action to the subgroup
$$H=\left\{\begin{pmatrix}
    1 & t\\
    0 & 1
\end{pmatrix} \ : \  t\in \mathbb{R} \right\}.$$
We refer to \cite{CW2022} for a comprehensive survey of the dynamics of
 the horocycle flow.

Let $\phi$ be a non-zero Jenkins-Strebel differential  in $Q\mathcal{M}_{0,6}$ with $k$ cylinders. A \emph{saddle connection} of $\phi$ is a geodesic segment connecting singularities (zeros or poles) of $\phi$, and
which does not contain any singularities in its interior. 
Let $\mathcal{F}(\phi)$ be the horizontal foliation
 of $\phi$, whose leaves are tangent to the directions  such that $\phi(z)dz^2>0$.   Denote the horizontal critical graph of $\phi$
by $\Gamma=\Gamma(\phi)$. 
We say that  $\phi$ or $\mathcal{F}(\phi)$ is \emph{Jenkins-Strebel}  if $\Gamma(\phi)$ is compact and the complement $X\setminus \Gamma(\phi)$  is a union of disjoint flat cylinders foliated by horizontal closed leaves.

Smillie and Weiss \cite[Theorem 5]{SW2004} proved the following theorem. 

\begin{Theorem}\label{thm:SW}
Let $(X,\phi) \in \mathcal{Q}(\kappa)$. Then
\begin{enumerate}
  \item The  closure $\overline{H \cdot \phi}$ contains a Jenkins-Strebel differential $\psi\in  \mathcal{Q}(\kappa)$. 
  \item If $\phi$ has $k$ horizontal cylinders whose union is not dense in $S_{g,n}$,
  then there exists a Jenkins-Strebel differential $\psi$ in $\overline{H \cdot \phi}$ that has at least $k+1$ horizontal cylinders. 
\end{enumerate}   
\end{Theorem}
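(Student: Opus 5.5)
The statement is the Smillie--Weiss theorem on horocycle orbit closures. I would argue by recurrence of saddle connections under the horocycle flow together with compactness in the stratum. Throughout I will use that $h_t$ fixes horizontal vectors and shears a holonomy vector $(x,y)$ to $(x+ty,y)$.

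\textbf{Step 1: find a minimal set.} I would first show that $\overline{H\cdot\phi}$ contains a compact $H$-invariant set.

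Suppose no such set exists. Then, by Minsky--Weiss quantitative nondivergence for the horocycle flow, the orbit $h_t\phi$ still spends a definite proportion of time in a compact part of $\mathcal{Q}(\kappa)$. Here "compact part" means a region where every saddle connection has length at least $\epsilon$, and the only exception is saddle connections that are horizontal for $\phi$. Those stay short but are $h_t$-invariant, so their lengths are fixed; we may pass to the closed subset of the stratum with the same horizontal data.

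So the orbit closure meets a compact set. Zorn's lemma then gives a minimal nonempty closed $H$-invariant subset $M\subset\overline{H\cdot\phi}$ that meets the compact part.

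\textbf{Step 2: elements of a minimal set are Jenkins--Strebel.} Take $\psi\in M$. If $\psi$ has a non-horizontal saddle connection of holonomy $(x,y)$ with $y\neq0$, then at time $t=-x/y$ it becomes vertical, and its length $|y|$ is fixed along the way.

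A more robust mechanism comes from the renormalization $g_s=\mathrm{diag}(e^{s},e^{-s})$, which normalizes $H$. Points of a minimal set for $H$ are recurrent under $H$. I would combine this with the Minsky--Weiss fact that "most" points on a long horocycle segment have a horizontal saddle connection of bounded length coming from the cusp excursions. Shearing a short, nearly horizontal saddle connection to horizontal position and passing to limits, we find that the minimal set contains surfaces with more and more horizontal saddle connections.

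The cleaner route, which I expect Smillie--Weiss use, is induction on the dimension of the horizontal data:
\begin{itemize}
\item fix the horizontal saddle connections of $\psi$;
\item if the horizontal foliation is not completely periodic, the union $U$ of horizontal cylinders is not dense;
\item the complement of $U$ carries a minimal component, which contains arbitrarily long, nearly horizontal saddle connections of bounded vertical height;
\item shearing one of these to horizontal at the appropriate times $t_n$ and taking a limit in the compact part gives a new horizontal saddle connection, and eventually a new closed horizontal curve, hence a new cylinder.
\end{itemize}
This proves (2). Iterating it at most a bounded number of times (the number of cylinders is at most the dimension of the stratum) gives a completely periodic horizontal foliation, that is, a Jenkins--Strebel $\psi$, which proves (1). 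Note that (1) follows from (2) by induction starting from $k=0$.

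\textbf{Main obstacle.} The hard part is Step 2: the limit must stay in the same stratum $\mathcal{Q}(\kappa)$ and must keep the old cylinders while adding a new one. For this I would choose the times $t_n$ using quantitative nondivergence relative to the already horizontal saddle connections. These are fixed by $h_t$, so the old cylinders persist with constant moduli. Meanwhile, no non-horizontal saddle connection becomes short along the chosen subsequence, so zeros do not collide in the limit.
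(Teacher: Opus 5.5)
The paper does not prove this statement; it quotes it from Smillie--Weiss \cite{SW2004}. Your outline therefore has to stand on its own, and at the decisive step it does not.

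\textbf{The mechanism is misstated.} The flow $h_t$ fixes the vertical component of every holonomy vector, so no saddle connection can be ``sheared to horizontal''. A new horizontal saddle connection can only appear as a limit of $h_{t_n}\gamma_n$ with $y(\gamma_n)\to 0$; bounded vertical height is not enough. Here $t_n$ must lie in the window $I_n=\{t : x(\gamma_n)+t\,y(\gamma_n)\in[1,2]\}$, of length $1/|y(\gamma_n)|$, so that the sheared $\gamma_n$ has length bounded above and below.

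\textbf{The real gap is the nondivergence step.} Because the times are confined to $I_n$, your claim that ``no non-horizontal saddle connection becomes short along the chosen subsequence'' is exactly what needs proof. It does not follow from nondivergence relative to the old horizontal saddle connections. Minsky--Weiss gives a time in $I_n$ with $h_t\psi$ in a fixed compact set only if every saddle connection reaches length $\ge\rho$ somewhere on $I_n$. A saddle connection $\delta$ with $0<|y(\delta)|\le 2\rho\,|y(\gamma_n)|$, which the minimal component supplies in abundance, can stay shorter than $\rho$ on all of $I_n$. The limit may then leave $\mathcal{Q}(\kappa)$. An extra idea is needed. For instance, take $\gamma_n$ to be a best approximation: minimal $|y|$ among saddle connections with $y\neq 0$ and $|x|\le X_n$, where $X_n\to\infty$. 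Any such $\delta$ then has $|x(\delta)|>X_n$, hence is long throughout $I_n$, and the Minsky--Weiss hypothesis holds.

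\textbf{The iteration tracks the wrong quantity.} One round produces a new horizontal saddle connection, not a new closed leaf, so a bound on the number of cylinders does not make the process terminate. The argument that does work runs as follows:
\begin{itemize}
\item The horizontal saddle connections of $\psi$ persist in any limit of $h_{t_n}\psi$ in the stratum, with the same holonomy though possibly subdivided.
\item The new one is distinct from them, by uniqueness of geodesic representatives rel singularities.
\item Their number is at most $\frac12\sum_i(\kappa_i+2)$.
\end{itemize}
So after finitely many rounds no minimal component remains, which gives (1). For (2) you must also check two things. The $k$ original cylinders survive as $k$ distinct maximal cylinders, because their boundary saddle connections persist. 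An area count then forces at least one more cylinder.

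\textbf{Step 1 is unused and, as written, wrong.} Zorn's lemma needs a single compact set meeting \emph{every} closed invariant subset of $\overline{H\cdot\phi}$. You only show that the orbit of $\phi$ meets it. Points of the orbit closure can acquire new, arbitrarily short horizontal saddle connections, so the set you obtain need not be minimal. This is repairable: first pass to a point of $\overline{H\cdot\phi}$ with the maximal number of horizontal saddle connections. After that, horizontal lengths are bounded below on its orbit closure.
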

See \cite[\S 2.4]{GM2020} for details.

By Proposition \ref{lem:GM-orbit} and Theorem \ref{thm:SW}, Conjecture \ref{conj:equl}
can be reduced to the classification of Jenkins-Strebel differentials.

\subsection{Markovic's criterion } 
Suppose that $\phi$ is a Jenkins-Strebel differential on $X$ that decomposes the surface into a finite number of horizontal cylinders $\Pi_{j}, j=1,\cdots,k$. 

Let $\mathbb{H}^k$ be the $k$-fold product of the upper half-plane $\mathbb{H}$. Following \cite[Section 4]{Markovic2018}, we define the \emph{Teichm\"uller polydisk} $$\mathcal{E}^\phi:\mathbb{H}^k\rightarrow \Tei_{g,n}$$ by associating 
each ${\lambda}= (\lambda_1, \cdots, \lambda_k)$ a Riemann surface $X_\lambda\in \Tei_{g,n}$ determined by the Beltrami differential  $$\mu_\lambda=\left( \frac{i-\lambda_j}{i+\lambda_j} \right) \frac{|\phi|}{\phi}$$ 
on each $\Pi_{j}$. 

\begin{Remark}
  It is known that (see \cite[Section 4]{Markovic2018}) $\mathcal{E}^\phi$ is a holomorphic embedding, but not proper.
\end{Remark}

Denote by $h_j$ the height of $\Pi_{j}$.
Note that each $\Pi_j$ is deformed into a new horizontal cylinder $\Pi_j(\lambda)$.
The circumference is preserved, while the height is scaled by a factor $\operatorname{Im} \lambda_j$. 
By gluing $\Pi_j(\lambda), i =1, \cdots, k$, we obtain a
new Jenkins-Strebel differential $\phi_\lambda$ on $X_\lambda$.

The following beautiful and important theorem provides a criterion to characterize Jenkins-Strebel differentials which generate Teichm\"uller disks 
that admit holomorphic retractions.  See \cite[Theorem 4.1]{GM2020}.

\begin{Theorem}[Markovic's criterion]\label{thm:GM-JS}
      Let $\phi\in Q(X)$ be a non-zero Jenkins-Strebel differential. Denote the area of the $j$-th cylinder of $\phi$ by $a_j$ . Let $\mathcal{E}^{\phi}:\mathbb{H}^k\rightarrow\Tei_{g,n}$ be the Teichm\"{u}ller polydisk associated with $\phi$. Then the Teichm\"{u}ller disk $\tau^{\phi}$ admits a holomorphic retraction  if and only if there exists a holomorphic map $\Phi:\Tei_{g,n}\rightarrow \mathbb{H}$ satisfying \begin{equation}\label{equ:criterion}
       \left(\Phi\circ \mathcal{E}^\phi\right)(\lambda)=\sum\limits_{j=1}^{k}a_j\lambda_j.
      \end{equation}
\end{Theorem}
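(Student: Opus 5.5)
The \emph{if} direction is formal. Write $A=\sum_{j}a_j=\|\phi\|$. Since $\mathcal{E}^\phi(\lambda,\dots,\lambda)=\tau^\phi(\lambda)$, a map $\Phi$ satisfying \eqref{equ:criterion} gives $\Phi(\tau^\phi(\lambda))=A\lambda$. Hence $F=\Phi/A$ is a holomorphic map $\Tei_{g,n}\to\HP$ with $F\circ\tau^\phi=\mathrm{id}_\HP$. The whole proof is therefore in the converse. Given a retraction $F$, I will set $\Phi=A\,F$ and show that
$$f:=F\circ\mathcal{E}^\phi:\HP^k\to\HP$$
equals $\sum_j w_j\lambda_j$ with $w_j=a_j/A$.

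\emph{Step 1 (first-order information on the diagonal).} At $X_\lambda=\tau^\phi(\lambda)$ the differential $dF$ is a cotangent vector, i.e.\ an element of $Q(X_\lambda)$. By the Schwarz--Pick lemma its $L^1$-norm is bounded by the Poincar\'e norm of $d\lambda$. Because $F\circ\tau^\phi=\mathrm{id}$, it also pairs to exactly that value with the unit Teichm\"uller tangent vector $|\phi_\lambda|/\phi_\lambda$. Equality in $|\int_X\mu\,\psi|\le\|\mu\|_\infty\|\psi\|_1$ forces $\psi=\mu^{-1}|\psi|$ a.e., and hence $\psi$ is a positive multiple of $\phi_\lambda$. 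Thus $dF$ at $X_\lambda$ is an explicit multiple of $\phi_\lambda$.

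The $j$-th coordinate vector of $\mathcal{E}^\phi$ at the diagonal point is the Beltrami differential $\tfrac{|\phi_\lambda|}{\phi_\lambda}\chi_{\Pi_j(\lambda)}$, up to the same scalar that relates $d\lambda$ to $\tau^\phi$. Pairing it with $\phi_\lambda$ gives $\int_{\Pi_j(\lambda)}|\phi_\lambda|$, which is $a_j\operatorname{Im}\lambda$. After normalising, this yields
$$\partial f/\partial\lambda_j=w_j \quad\text{at every diagonal point, for every } j.$$

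\emph{Step 2 (from the diagonal to all of $\HP^k$).} Consider the holomorphic function $g(\lambda)=f(\lambda)-\sum_j w_j\lambda_j$. It vanishes identically on the diagonal and, by Step 1, to first order there. I plan to show $\operatorname{Im} g\ge 0$ on $\HP^k$, i.e.
$$\operatorname{Im} f(\lambda)\ \ge\ \sum_j w_j\operatorname{Im}\lambda_j .$$
Granting this, $g$ maps the connected domain $\HP^k$ into the closed upper half-plane and takes a real value at interior points. By the open mapping theorem (applied on complex lines through a diagonal point) $g$ is constant, hence $g\equiv0$, and $\Phi=A\,F$ satisfies \eqref{equ:criterion}.

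To obtain the inequality, I will use a Julia--Wolff--Carath\'eodory argument at the boundary point $\infty$. Along the diagonal, $f(iy,\dots,iy)=iy$, so $f$ has angular derivative $1$ at $\infty$ in the diagonal direction. Julia's lemma on the polydisk, with the horospheres at $\infty$ weighted by the $w_j$, should then give the inequality. An alternative route is to approximate arbitrary points by the slices $t\mapsto(\lambda_1+t,\dots,\lambda_k+t)$ and apply the one-variable Julia lemma along complex lines $\zeta\mapsto\lambda^0+\zeta v$ with $v\in\mathbb{R}_{>0}^k$. In either version the weights must be identified with the $w_j$ computed in Step 1; matching them is the delicate point.

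\emph{Main obstacle.} I expect Step 2 to be the hard part. A holomorphic map $\HP^k\to\HP$ fixing the diagonal is not a priori linear; for instance, perturbations by $(\lambda_1-\lambda_2)^2$ vanish on the diagonal to first order. The polydisk Julia lemma only produces weights in a convex set of possibilities, so one must use the exact equality $\partial f/\partial\lambda_j=w_j$ on the whole diagonal, not just at one point. If the Julia approach gives only an inequality with the wrong weights, my fallback is to exploit the real translations $\lambda_j\mapsto\lambda_j+t$. Integer multiples of the modulus correspond to Dehn twists about the core curves of the $\Pi_j$. I would average $F$ over these mapping classes via a normal-families limit along the diagonal, to force $f$ to be invariant under, and hence affine in, each real translation. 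Combined with Step 1, this pins $f$ down to $\sum_j w_j\lambda_j$.
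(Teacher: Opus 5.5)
The paper does not prove this criterion. It quotes it from \cite{GM2020}, where it is credited to \cite{Markovic2018}, so I am judging your argument on its own terms. Your \emph{if} direction is correct, and so is Step 1, which is the standard uniqueness of the norming differential for $|\phi_\lambda|/\phi_\lambda$.

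\textbf{The gap is Step 2, which carries the whole \emph{only if} direction.} In your main route, the only information you use about $F$ is that $f=F\circ\mathcal{E}^\phi$ is a holomorphic map $\mathbb{H}^k\to\mathbb{H}$ that is the identity on the diagonal, with $\partial f/\partial\lambda_j=w_j$ at every diagonal point. These properties do not imply $\operatorname{Im} f\ge\sum_j w_j\operatorname{Im}\lambda_j$. Take the weighted geometric mean $f(\lambda)=\prod_j\lambda_j^{w_j}$ with principal branches, so that $\arg f=\sum_j w_j\arg\lambda_j$.
\begin{itemize}
\item It maps $\mathbb{H}^k$ into $\mathbb{H}$, fixes the diagonal pointwise, has $\partial_j f=w_j$ along the whole diagonal, and has angular derivative $1$ at $\infty$ along the diagonal.
\item At $\lambda=(iy_1,\dots,iy_k)$ with the $y_j$ not all equal, weighted AM--GM gives $\operatorname{Im} f=\prod_j y_j^{w_j}<\sum_j w_jy_j$. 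So $f$ violates your inequality and is not linear.
\end{itemize}
Julia-type arguments cannot rescue this.
\begin{itemize}
\item The polydisk Julia lemma gives only $\operatorname{Im} f(\lambda)\ge\min_j\operatorname{Im}\lambda_j$: compare with $f(iy\mathbf{1})=iy$ by Schwarz--Pick and let $y\to\infty$. The example satisfies this bound.
\item Along the lines $\lambda^0+\zeta\mathbf{1}$ you get the weighted inequality only on $\{\operatorname{Im}\lambda\in\mathbb{R}_{>0}\mathbf{1}\}$. That set has real codimension $k-1$, and the geometric mean satisfies the inequality there too, so your open-mapping step has nothing to act on.
\item Along $\lambda^0+\zeta v$ with $v\notin\mathbb{R}\mathbf{1}$, Step 1 does not determine the angular derivative. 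In the example it is $\prod_j v_j^{w_j}$, not $\sum_j w_jv_j$.
\end{itemize}
So ``matching the weights'' is not a delicate point; as stated, that step is false. Any proof must use structure of $\Tei_{g,n}$ beyond these properties of $f$. Note also that setting $\Phi=AF$ commits you to showing that the \emph{given} retraction is linear on the polydisk. That is more than the theorem asserts, since it asks only for some $\Phi$, and your argument offers no reason for such rigidity.

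\textbf{The fallback does not close the gap either.} This is where Teichm\"uller-specific input would have to enter. The twist about the core of $\Pi_j$ acts on the polydisk by $\lambda\mapsto\lambda+m_je_j$, where $m_j=\ell_j/h_j$. So $F\circ T^n$ retracts the disk $T^{-n}(\tau^\phi)$, i.e.\ the line $\mu\mathbf{1}-(n_jm_j)_j$, and not $\tau^\phi$ unless $(n_jm_j)_j\in\mathbb{R}\mathbf{1}$.
\begin{itemize}
\item To average you must renormalize, say to $F\circ T^n-\sum_j w_jn_jm_j$. A non-degenerate normal-families limit then needs a bound, uniform in $n$, on $f(\mu\mathbf{1}+(n_jm_j)_j)-\sum_j w_jn_jm_j$.
\item Schwarz--Pick only places this point within hyperbolic distance $O(\log|n|)$ of $\mu$. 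That allows Euclidean drift of polynomial size.
\item For example, the harmonic mean $2\lambda_1\lambda_2/(\lambda_1+\lambda_2)$ is another retraction of the diagonal of $\mathbb{H}^2$ with the same diagonal derivatives, and it equals $i(1+s^2)$ at $(i+s,i-s)$. With such drift the Ces\`aro averages can escape to $\infty$, and nothing in your proposal excludes this for $F\circ\mathcal{E}^\phi$.
\end{itemize}
An a priori bound of this kind, or some other way of building from $F$ a new holomorphic $\Phi$ on $\Tei_{g,n}$ whose restriction to $\mathcal{E}^\phi(\mathbb{H}^k)$ is twist-equivariant with cocycle $n\mapsto\sum_j w_jn_jm_j$, is the missing idea. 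Even granting a limit, the twists give only the lattice $\bigoplus_j\mathbb{Z}m_je_j$ of translations, not every real translation. You would still need a separate argument to get from lattice-equivariance to linearity.
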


As a corollary of Theorem \ref{thm:GM-JS}, if $\phi$ is a Jenkins-Strebel differential with $k$ cylinders and if $\phi$ generates a Teichm\"uller disk that admits a holomorphic retraction, then so does every quadratic differential in its $\mathbb{H}^k$ orbit.

Based on Theorem \ref{thm:GM-JS}, Gekhtman and Markovic proposed the program to solve Conjecture \ref{conj:equl} by showing that for any quadratic differential $\phi$ with an odd-order zero (not lie at a marked point), the orbit closure of $\phi$ contains a Jenkins-Strebel differential that does not satisfy the criterion \eqref{equ:criterion}. They carried out the program for $\Tei_{0,5}$ in \cite{GM2020}. 

In the remainder of this paper, we focus on $\Tei_{0,6}$.

\section{Classification of Jenkins-Strebel differentials on $S_{0,6}$}
\label{sec:classify}

We adopt the following notation. Let  $\mathcal{Q}(\kappa)$ be a stratum of $Q\mathcal{M}_{0,6}$, where $\kappa=(\kappa_1, \cdots, \kappa_m)$.  If there are $j$ multiplicities say $\kappa_{1},\cdots, \kappa_{j}$ that are equal to some $n\in \mathbb{Z}$, then we denote $n^{j}$ instead of $(\kappa_{1},\cdots, \kappa_{j})$. For instance, $\mathcal{Q}(-1^4)$ denotes 
the stratum of quadratic differentials with four simple poles (without zeros),
and $\mathcal{Q}(2, -1^6)$ denotes 
the stratum of quadratic differentials with a single zero of order $2$ 
and six simple poles.
There are four strata in $Q\mathcal{M}_{0,6}$, as illustrated in the following table:

\bigskip

\begin{center}
\begin{tabular}{ |c |c |c| c | c| } 
  \hline
Case & (I) & (II) & (III) & (IV)  \\ 
  \hline
 Stratum &  $\Q(1^2, -1^6)$ & $\Q(2, -1^6)$ & $\Q(1, -1^5)$ & $\Q(-1^4)$ \\ 
  \hline
\end{tabular}
\end{center}

\bigskip

A Jenkins-Strebel differential  that is obtained by taking the double of a rectilinear polygon (whose sides are parallel to the coordinate axes) is called a \emph{pillowcase}. 
A \emph{staircase (surface)} on $S_{0,6}$ is the double of a staircase-shaped polygon  illustrated  in Figure   
\ref{fig:staircase}. It is a Jenkins-Strebel differential with two simple zeros and six simple poles.
This concept can be extended to flat surfaces on $S_{0,n}$ for any $n\geq 5$. 

\begin{figure}[h]\label{fig:staircase}
   \centering
   \begin{tikzpicture}[line width=1pt,scale=0.8]
     \draw (-5.7,0)--(-0.7,0)--(-0.7,1.2)--(-2.3,1.2)--(-2.3,2.4)--(-3.9,2.4)--(-3.9,3.6)--(-5.7,3.6)--(-5.7,0);
        \draw[dashed] (-5.7,1.2)--(-2.3,1.2);
        \draw[dashed] (-5.7,2.4)--(-3.9,2.4);
    \end{tikzpicture}
    \caption{The double of the polygon is called a staircase.}
    \label{fig:staircase}
\end{figure}

The first step to prove Theorem \ref{thm:main2} is to classify Jenkins-Strebel differentials on $S_{0,6}$.  In this section, we show that the problem can be reduced to 
staircase surfaces.

\begin{Remark}
Staircase surfaces on $S_{0,7}$ are used by Bourque and Rafi \cite{BourqueRafi} to construct  non-convex Teichm\"uller geodesic balls. 
  Staircase surfaces on $S_{0,n}$ (and the moduli space) are studied by Weber and Wolf \cite{WW98} for finding minimal surfaces. 
\end{Remark}

If $\phi$ is a Jenkins-Strebel differential with $k$ horizontal cylinders, then 
 $\mathbb{H}^k$ acts on  $\phi$ through the Teichm\"uller polydisk associated with $\phi$. 
Throughout what follows, we denote by
$$\rot=\begin{pmatrix}
    0 & -1\\
    1 & 0
\end{pmatrix}$$
the rotation matrix of the plane by angle $\frac{\pi}{2}$. The matrix $\rot$ acts on $\phi$ by interchanging the horizontal and vertical foliations.

This section contains many figures that serve to illustrate the proofs. We usually plot the trajectories of a quadratic differential on the plane, with the assumption that the point at infinity is a regular point.

\subsection{Case (I): $\phi \in \Q(1^2, -1^6)$.}\label{Case-I}
In this case, $\phi$ has two simple zeros and six simple poles.  
Most of this section will be devoted to this case. 
We shall classify $\phi$ according to the topological type of its critical graph $\Gamma$.
Denote the zeros of $\phi$ by $z_1$ and $z_2$. Denote the poles by $p_1, \cdots, p_5$ and $p_6$. 

\medskip
\noindent
{\bf Jenkins-Strebel differentials with three cylinders.}

\subsection*{Case (I-1):} \label{case-I-1} Each zero $z_i$ of $\phi$ is connected to itself by a horizontal saddle connection, and there is a horizontal saddle connection joining $z_i$ and a pole. 
There are three possibilities as shown in Figure \ref{fig:I-1},
which are referred to as type (a), (b) and (c), respectively. 
The surface is decomposed by $\Gamma$ into three cylinders, $\Pi_1, \Pi_2$ and $\Pi_3$,  which are arranged sequentially such that $\Pi_2$ separates $\Pi_1$ from $\Pi_3$. 
Denote their circumferences by $\ell_1,\ell_2$ and $\ell_3$.

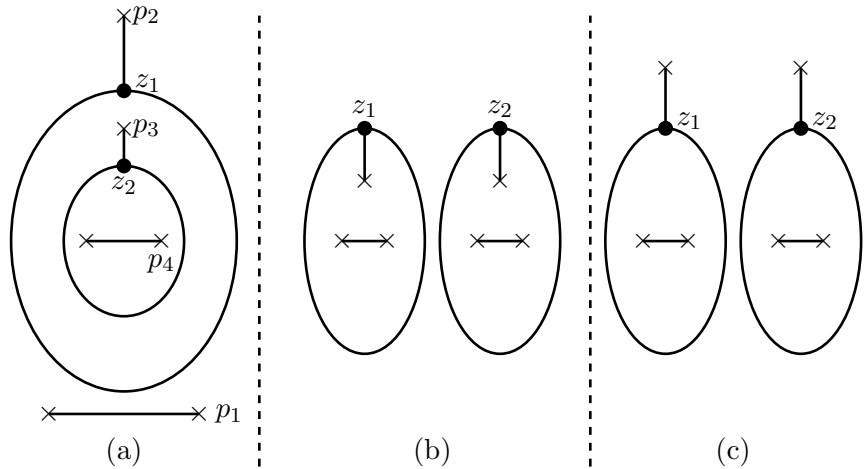
\begin{figure}[h]
\centering

\begin{tikzpicture}[line width=1pt,scale=1]
 
  \draw (-4.2,0) ellipse (1.5cm and 2cm);
  \node[ circle, fill=black,inner sep = 2pt] (A) at (-4.2, 2.0){};
  \node[right] at (-4.2, 2.1) {$z_1$};
  \draw (-4.2,2.0) to (-4.2,3.0);
   \node  at (-4.2, 3.0) {$\times$};
   \node  at (-3.9, 3.0) {$p_2$};
   \draw (-4.2,0) ellipse (0.8cm and 1.0cm);
   \node[ circle, fill=black,inner sep = 2pt] (A) at (-4.2, 1.0){};
   \node[below] at (-4.2, 1.0) {$z_2$};
   \draw (-4.2,1.0) to (-4.2,1.5);
   \node  at (-3.9, 1.5) {$p_3$};
   \node  at (-4.2, 1.5) {$\times$};
   \draw (-4.7,0) to (-3.7,0);
    \node  at (-4.7, 0){$\times$};
     \node at (-3.7, 0){$\times$};
     \node at (-3.7, -0.3){$p_4$};
     \draw (-5.2,-2.3) to (-3.2, -2.3);
      \node at (-5.2,-2.3){$\times$};
     \node  at (-3.2, -2.3){$\times$};
     \node at (-2.8, -2.3){$p_1$}; 
     \node  at (-4.2,-2.8) {(a)};
     \draw[dashed] (-2.4,-3)--(-2.4,3);

 \draw (-1.0,0) ellipse (0.8cm and 1.5cm);
 \draw (0.8,0) ellipse (0.8cm and 1.5cm);
 \draw (-1.0,1.5) to (-1.0,0.8);
 \draw (0.8,1.5) to (0.8, 0.8);
\node[ circle, fill=black,inner sep = 2pt] (A) at (-1.0, 1.5){};
\node[ circle, fill=black,inner sep = 2pt] (A) at (0.8, 1.5){};
\node[above] at (-1.0, 1.5) {$z_1$};
\node[above] at (0.8, 1.5) {$z_2$};
\node at (-1.0,0.8){$\times$};
\node  at(0.8, 0.8){$\times$};
\draw (-0.7,0) to (-1.3,0);
\node  at(-0.7,0){$\times$};
\node  at(-1.3,0){$\times$};
\draw (0.5,0) to (1.1,0);
\node  at(0.5,0){$\times$};
\node  at(1.1,0){$\times$};
\draw [dashed] (2,-3)--(2,3);
\node  at (-0.1,-2.8) {(b)};

\draw (3,0) ellipse (0.8cm and 1.5cm);
 \draw (4.8,0) ellipse (0.8cm and 1.5cm);
 \draw (3.0,1.5) to (3.0,2.3);
 \draw (4.8,1.5) to (4.8,2.3);
\node[ circle, fill=black,inner sep = 2pt] (A) at(3.0,1.5){};
\node[ circle, fill=black,inner sep = 2pt] (A) at (4.8,1.5){};
\node[right] at (3.0,1.6) {$z_1$};
\node[right] at (4.8,1.6) {$z_2$};
\node  at(3.0,2.3){$\times$};
\node  at(4.8,2.3){$\times$};
\draw (2.7,0) to (3.3,0);
\node  at(2.7,0){$\times$};
\node  at(3.3,0){$\times$};
\draw (4.5,0) to (5.1,0);
\node  at(4.5,0){$\times$};
\node  at(5.1,0){$\times$};
\node  at (3.9,-2.8) {(c)};
\end{tikzpicture}
\caption{Case (I-1): The crosses stand for simple poles and the black dots stand for zeros. The Jenkins-Strebel differential $\phi$ is decomposed by the critical graph into three cylinders.}
\label{fig:I-1}
\end{figure}

\subsubsection*{\textbf{(I-1-a):}} $\phi$ is of type (a).  
The circumferences of the cylinders satisfy $\ell_3 < \ell_2 < \ell_1$.

Now $\HP^3$ acts on $\phi$ through the Teichm\"uller polydisk $\mathcal{E}^\phi$. 
We  shear $\Pi_1$   appropriately so that there exists a vertical saddle connection ending at the poles $p_1$ and $p_2$. See Figure \ref{fig:I-1} (a) for the notation. More precisely, 
we take a saddle connection in the cylinder $\Pi_1$ connecting $p_1$ and $p_2$, which may not be vertical. However, we can use the horocycle flow action on $\Pi_1$ to shear the cylinder so that 
saddle connection is vertical. 

Similarly, we shear $\Pi_2$ appropriately so that there is a vertical saddle connection connecting $z_1$ to the pole $p_3$. Finally, we shear $\Pi_3$ appropriately so that there exists a vertical saddle connection joining  $z_2$ and the pole $p_4$. 
We obtain a quadratic differential $\psi$ that lies within the $\HP^3$-orbit of $\phi$. Moreover, $\psi$  is a staircase, obtained by doubling the polygon as illustrated in the first case of Figure  \ref{fig:I-1-new}.

\begin{figure}[h]
    \centering

\begin{tikzpicture}[line width=1pt,scale=0.9]
\draw (-5.7,0)--(-2.7,0)--(-2.7,1.0)--(-3.7,1.0)--(-3.7,2)--(-4.7,2)--
(-4.7,3)--(-5.7,3)--(-5.7,0);
 \node[below]  at (-4.2,-0.3) {(a)};
 \draw (-1.7,0)--(1.3,0)--(1.3,1)--(-0.2,1)--(-0.2,2)--(1.1,2)--(1.1,3)--(-1.7,3)--
 (-1.7,0);
\node[below]  at (-0.2,-0.3) {(b)};
\draw (2.3,0)--(4.3,0)--(4.3,1)--(5.3,1)--(5.3,2)--(4.0,2)--
(4.0,3)--(2.3,3)--(2.3,0);
\node[below]  at (3.8,-0.3) {(c)};
\draw [dashed] (-4.7,2.0) -- (-5.7,2.0);
\draw [dashed] (-5.7,1.0) -- (-3.7,1.0);
\draw [dashed] (-1.7,2.0) -- (-0.2,2.0);
\draw [dashed] (-1.7,1.0) -- (-0.2,1.0);
\draw [dashed] (2.3,2.0) -- (4.3,2.0);
\draw [dashed] (2.3,1.0) -- (4.3,1.0);
\end{tikzpicture}
\caption{The three types of polygon corresponding to types (a),
(b) and (c). The one of type (a) is a staircase-shaped polygon.} 

\label{fig:I-1-new}
\end{figure}
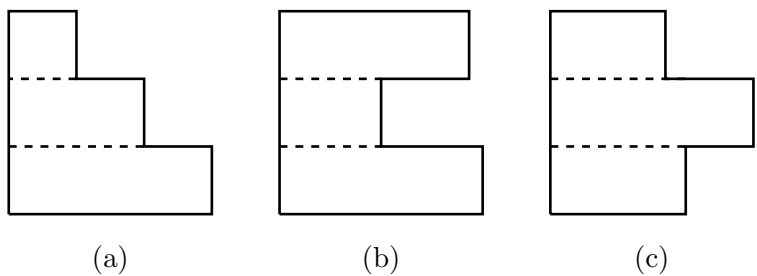

The other two types can be  handled in much the same way. By applying
the $\mathbb{H}^3$-action, we can transform them into pillowcases. 
There is a correspondence between Figure  \ref{fig:I-1} and 
Figure  \ref{fig:I-1-new}.
In Type (b), we have $\ell_1 > \ell_2$ and $\ell_3>\ell_2$. 
In Type (c), we have $\ell_1 < \ell_2$ and $\ell_3< \ell_2$.

\begin{Remark}
If $\psi$ is a pillowcase,
then the vertical foliation of $\psi$ is also Jenkins-Strebel. In other words,
  rotating $\psi$ by $\rot$ gives a new Jenkins-Strebel differential. We will show that every Jenkins-Strebel differential $\phi\in \Q(1^2, -1^6)$ can be replaced by a new $\psi$, which is a pillowcase of type (a), (b) or (c).
\end{Remark}

Next, we show that any pillowcase surface $\phi$ of type (I-1-b) or (I-1-c) can be deformed into a staircase, i.e., a Jenkins-Strebel differential of type (I-1-a).

\subsubsection*{\textbf{(I-1-b):}} $\phi$ is of type (I-1-b). 
We can apply the $\HP^3$-action on $\phi$ using the following steps 
(as illustrated in Figure \ref{fig:I-1-b}):

\begin{enumerate}
  \item[(i)] As a first step, we shear the cylinder $\Pi_2$ (the one in the middle)  appropriately so that the surface becomes the double of a $Z$-shaped polygon.
  \item[(ii)] Secondly, we use the $\HP^3$-action on the horizontal cylinders to adjust their heights, denoted by $h_1,h_2$ and $h_3$, to satisfy the inequality $h_1<h_2<h_3$.
      \item[(iii)] Thirdly, we apply $\rot$ to rotate the quadratic differential such that the horizontal leaves become vertical.  Then we shear the horizontal cylinder in the middle and rotate it back around so that the surface is transformed into the one in Case (I-1-a), i.e., the horizontal cylinder at the bottom has the largest circumference. 
\item[(iv)] Finally, we  
shear the horizontal cylinder in the middle  to transform the surface into a staircase.
\end{enumerate}

\begin{figure}[h]
    \centering
\begin{tikzpicture}[line width=1pt,scale=0.8]
 \draw (-1.7,0)--(1.3,0)--(1.3,1)--(-0.2,1)--(-0.2,2)--(1.1,2)--(1.1,3)--(-1.7,3)--
 (-1.7,0);
\draw [dashed] (-1.7,2)--(-0.2,2);
\draw [dashed] (-1.7,1)--(-0.2,1);
\draw [dashed] (5.5,2)--(4,2);
\draw [dashed] (5.5,1)--(4,1);
 \draw [-{To[length=3mm,width=2mm]}] (1.5,1.5)--(2.5,1.5);
 \draw (2.5,0)--(5.5,0)--(5.5,2)--(6.8,2)--(6.8,3)--(4.0,3)--(4.0,1)--(2.5,1)--(2.5,0);
  \draw [-{To[length=3mm,width=2mm]}] (6.8,1.5)--(7.8,1.5);
  \draw (7.0,0)--(10.0,0)--(10.0,1.5)--(11.3,1.5)--(11.3,3)--(8.5,3)--(8.5,0.5)--(7.0,0.5)--(7.0,0);
  \draw [-{To[length=3mm,width=2mm]}] (0,-1.8)--(1,-1.8);
 \draw (0.5,-3.5)--(4.8,-3.5)--(4.8,-2)--(3.5,-2)--(3.5,-0.5)--(2,-0.5)--(2,-3)--(0.5,-3)--(0.5,-3.5);
 \draw [-{To[length=3mm,width=2mm]}] (4.5,-1.7)--(5.5,-1.7);
 \draw (6,-3.5)--(10.3,-3.5)--(10.3,-3)--(9,-3)--(9,-2)--(7.5,-2)--(7.5,-0.5)--(6,-0.5)--(6,-3.5);
\draw [color=red,line width=2pt,dashed] (10,1.5)--(10,3);
\draw [color=red,line width=2pt,dashed] (8.5,0.5)--(8.5,0);
\draw [dashed] (2,-3)--(4.8,-3);
\draw [dashed] (2,-2)--(3.5,-2);
\draw [dashed] (2,-3)--(4.8,-3);
\draw [dashed] (6,-2)--(7.5,-2);
\draw [dashed] (6,-3)--(9,-3);

\node[above]  at (2,1.5) {(i)};
\node[above]  at (7.3,1.5) {(ii)};
\node[above]  at (0.5,-1.8) {(iii)};
\node[above]  at (5,-1.7) {(iv)};

\end{tikzpicture}
\caption{The steps to deform a pillowcase surface of type (b) into a staircase. The dashed lines in color red denote vertical saddle connections.} 
\label{B-A}
\label{fig:I-1-b}
\end{figure}
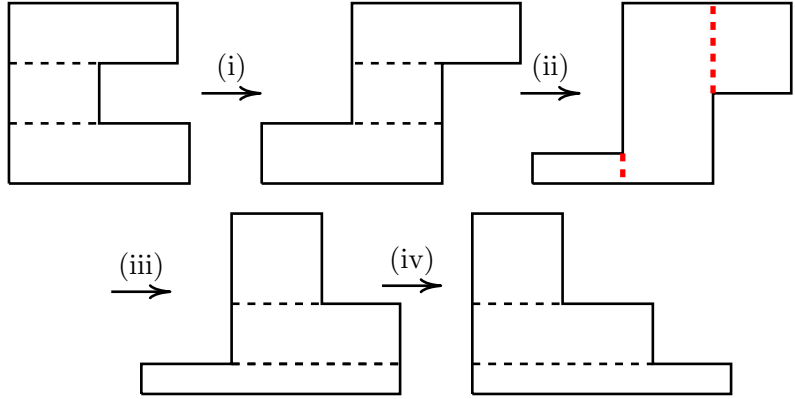



As a result, we have shown that surfaces of type  (I-1-b) can be deformed into staircases. In other words, there exists a staircase surface in the $\HP^3$-orbit of $\phi$, up to a $\rot$-rotation that changes the horizontal foliation into a vertical one.

\subsubsection*{\textbf{(I-1-c):}} $\phi$ is of type (I-1-c). There are two possibilities.
First, if $\phi$ has three vertical cylinders, then we apply $\rot$ to rotate $\phi$ so that it becomes a surface as in the third step of the surgery that we performed for Case (I-1-b). Then we can shear the horizontal cylinder in the middle  to create a staircase.

It remains to consider the case when $\phi$ has only two vertical cylinders. This occurs when the cylinders at the top and the bottom have the same circumference. 

\begin{figure}[h]
	\centering
	\begin{tikzpicture}[line width=1pt,scale=0.8]
		
		\draw (0,0)--(2,0)--(2,1)--(3,1)--(3,2)--(2,2)--
		(2,3)--(0,3)--(0,0);
		\draw [dashed] (0,1)--(2,1);
		\draw [dashed] (0,2)--(2,2);
		\draw [-{To[length=3mm,width=2mm]}] (3.2,1.5)--(4.2,1.5);
		\node[above]  at (3.7,1.5) {(i)};
		\draw (5.5,0)--(7.5,0)--(7.5,2)--(6.5,2)--(6.5,3)--(4.5,3)--(4.5,1)--(5.5,1)--(5.5,0);
		\draw [-{To[length=3mm,width=2mm]}] (7.8,1.5)--(8.8,1.5);
		\draw [dashed] (4.5,2)--(6.5,2);
		\draw [dashed] (5.5,1)--(7.5,1);
		\node[above]  at (8.3,1.5) {(ii)};
		\draw (10,0)--(12,0)--(12,1.5)--(11,1.5)--(11,1.5)--(11,3)--(9,3)--(9,0.5)--(10,0.5)--(10,0);
		\draw [color=red,line width=2pt,dashed] (10,0.5)--(10,3);
		\draw [color=red,line width=2pt,dashed] (11,0)--(11,1.5);
		\draw [-{To[length=3mm,width=2mm]}] (0,-1.8)--(1,-1.8);
		\node[above]  at (0.5,-1.8) {(iii)};
		\draw (1.2,-3.5)--(4.2,-3.5)--(4.2,-2)--(3.2,-2)--(3.2,-0.5)--(2.2,-0.5)--(2.2,-1)--(1.2,-1)--(1.2,-3.5);
		\draw [color=red,line width=2pt,dashed] (2.2,-1)--(2.2,-3.5);
		\draw [color=red,line width=2pt,dashed] (3.2,-2)--(3.2,-3.5);
		\draw [-{To[length=3mm,width=2mm]}] (4.5,-1.7)--(5.5,-1.7);
		\node[above]  at (5,-1.7) {(iv)}; 
		\draw (5.7,-3.5)--(8.7,-3.5)--(8.7,-0.5)--(7.7,-0.5)--(7.7,-1)--(6.7,-1)--(6.7,-2)--(5.7,-2)--(5.7,-3.5);
		
	\end{tikzpicture}
	\caption{The step to deform a pillowcase of type (c) with two vertical cylinders into a staircase surface.} 
	\label{fig:I-1-c}
\end{figure}
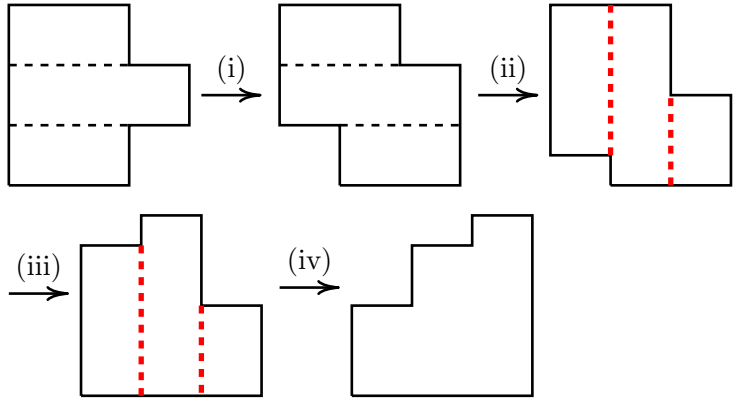

Now, we deform $\phi$ using the following steps (as illustrated in Figure \ref{fig:I-1-c}):

\begin{enumerate}
  \item[(i)] Note that $\phi$ has two vertical cylinders.
  By applying the $\mathbb{H}^2$-action on the vertical cylinders we can make one cylinder longer and the other one shorter. Now we examine  the horizontal cylinders. 
  We shear the horizontal cylinder in the middle appropriately so
that the new surface has three vertical cylinders. We still denote the resulting surface by $\phi$.
\item[(ii)] In the second step, we apply the $\HP^3$-action to the horizontal cylinders so that their heights strictly decrease  from top to bottom.
    \item[(iii)] Thirdly, we apply $\rot$ to rotate the quadratic differential so that the horizontal leaves become vertical. Next,
 we shear the horizontal cylinder in the middle  appropriately so that the  resulting surface becomes the one considered in the previous case. That is, the surface consists of three vertical cylinders.  
\item[(iv)] Finally, we apply $\rot$ to rotate $\phi$ to a surface as in the third step of the surgery performed for Case (I-1-b). Then we can proceed to shear the horizontal cylinder in the middle  to transform the surface into a staircase.
\end{enumerate}

This shows that surfaces of type (I-1-c) can be deformed into staircase surfaces. 

\subsection*{Case (I-2):} Each zero of $\phi$ is connected to itself by a horizontal saddle connection, and the two zeros are joined by a horizontal saddle connection. 
This is illustrated in Figure \ref{fig:I-2}.

In this case, $\phi$ is a Jenkins-Strebel differential with three horizontal cylinders. After shearing the horizontal cylinders of $\phi$ appropriately, we obtain a quadratic differential $\psi$ in the $\mathbb{H}^3$-orbit of $\phi$, which is the double of a polygon as illustrated  in Figure \ref{fig:I-2-new}. Rotate $\psi$  by $\rot$ transforms $\psi$ into a pillowcase of type (I-1-b).

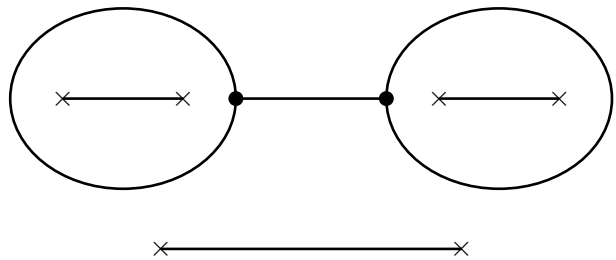
\begin{figure}[h]
    \centering
    \begin{tikzpicture}[line width=1pt]
    \draw  (-2.5,0) ellipse (1.5cm and 1.2cm);
   \node[ circle, fill=black,inner sep = 2pt] (A) at (-1,0){};
    \draw  (2.5,0) ellipse (1.5cm and 1.2cm);
    \node[ circle, fill=black,inner sep = 2pt] (A) at (1,0){};
    \draw (-1,0)--(1,0);
    \draw (-3.3,0)--(-1.7,0);
    \draw (3.3,0)--(1.7,0);
    \node  at(-3.3,0){$\times$};
\node  at(3.3,0){$\times$};
\node  at(-1.7,0){$\times$};
\node  at(1.7,0){$\times$};
\draw (-2,-2.0)--(2,-2.0);
\node  at(-2,-2.0){$\times$};
\node  at(2,-2.0){$\times$};

    \end{tikzpicture}
    \caption{A Jenkins-Strebel differential in Case (I-2).}
    \label{fig:I-2}
\end{figure}

\begin{figure}[h]
    \centering
    \begin{tikzpicture}[line width=1pt,scale=0.9]
     \draw (0,0)--(5,0)--(5,4)--(3.4,4)--(3.4,2.2)--(1.8,2.2)--(1.8,4.2)--(0,4.2)--(0,0);

    \end{tikzpicture}
    \caption{A polygon in Case (I-2).}
    \label{fig:I-2-new}
\end{figure}
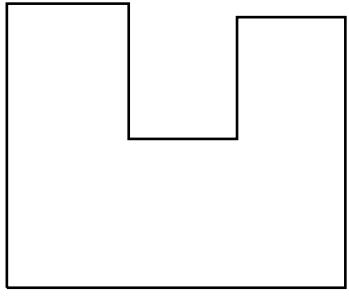

\subsection*{Case \textbf{(I-3):}} 
There are three horizontal saddle connection that emanate from one zero and terminate at the other,
denoted by $\gamma_1,\gamma_2$ and $\gamma_3$, respectively. In this case, $\phi$ has three horizontal cylinders. Denote by $\widehat{p_1p_2},
\widehat{p_3p_4}$ and $\widehat{p_5p_6}$ the horizontal saddle connections joining the pairs of poles.  See Figure \ref{fig:I-6}  for an illustration.

\begin{figure}[h]
    \centering
    \begin{tikzpicture}[line width=1pt, scale=1.2]
        \draw (0,0) ellipse (2cm and 1.5cm);
        \draw (-2,0)--(2,0);
        \draw (-0.7,0.7)--(0.7,0.7);
        \draw (-0.7,-0.7)--(0.7,-0.7);
        \draw (-0.6,-2)--(0.6,-2);
         \node[ circle, fill=black,inner sep = 2pt] (A) at (-2,0){};
        \node[ circle, fill=black,inner sep = 2pt] (A) at (2,0){};
         \node at(-0.7,0.7){$\times$}; 
        \node  at(0.7,0.7){$\times$};
        \node at(-0.7,-0.7){$\times$}; 
       \node at(0.7,-0.7){$\times$}; 
        \node at(-0.6,-2){$\times$}; 
        \node at(0.6,-2){$\times$};
        \node[left] at(-0.7,0.7){$p_1$}; 
        \node[right]  at(0.7,0.7){$p_2$};
        \node[left] at(-0.7,-0.7){$p_3$}; 
       \node[right] at(0.7,-0.7){$p_4$}; 
        \node[left] at(-0.6,-2){$p_5$}; 
        \node[right] at(0.6,-2){$p_6$};
        \node[above] at(0,1.5){$\gamma_1$};
        \node[above] at(0,0){$\gamma_2$};
        \node[above] at(0,-1.5){$\gamma_3$};
        \node[left] at(-2,0) {$z_1$};
         \node[right] at(2,0) {$z_2$};
    \end{tikzpicture}
    \caption{Case (I-3).}
    \label{fig:I-6}
\end{figure}
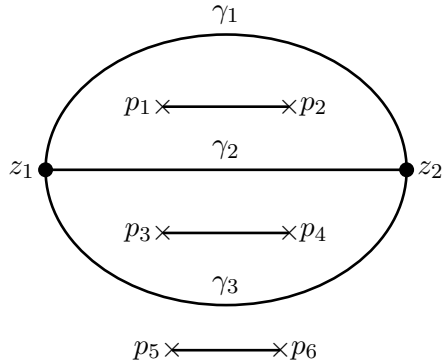

With an obvious change in notation, there are three subcases as follows. 

\subsubsection*{\textbf{(I-3-a):}} Assume that $|\gamma_1|=|\gamma_2|=|\gamma_3|$.
We can  shear  the horizontal cylinders appropriately so that 
$z_1$ is connected to each of $p_1,p_3,p_5$ by a vertical saddle connection.
The assumption  $|\gamma_1|=|\gamma_2|=|\gamma_3|$ implies that 
$z_2$ is also connected to each of $p_2,p_4,p_6$ by a vertical saddle connection.
As a result, the vertical critical graph of $\phi$ consists of two trivalent trees, 
see Figure \ref{fig:I-6-1}.

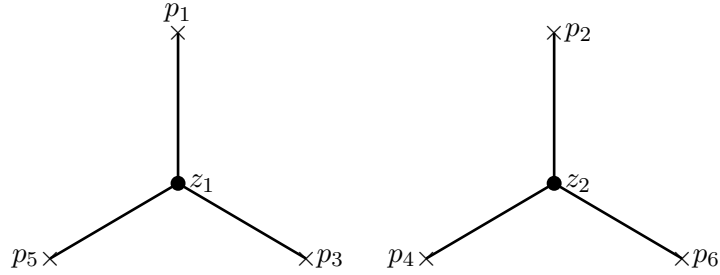
\begin{figure}[h]
    \centering
    \begin{tikzpicture}[line width=1pt]
        \draw (0,0)--(0,-2)--(-1.7,-3);
        \draw (0,-2)--(1.7,-3);
        \draw (5,0)--(5,-2)--(3.3,-3);
        \draw (5,-2)--(6.7,-3);
        \node[ circle, fill=black,inner sep = 2pt] (A) at (0,-2){};
        \node[ circle, fill=black,inner sep = 2pt] (A) at (5,-2){};
         \node  at(0,0){$\times$}; 
        \node  at(1.7,-3){$\times$};
        \node  at(-1.7,-3){$\times$}; 
       \node  at(5,0){$\times$}; 
        \node  at(6.7,-3){$\times$}; 
        \node  at(3.3,-3){$\times$}; 
         \node[above] at(0,0){$p_1$}; 
        \node[right]  at(1.7,-3){$p_3$};
        \node[left] at(-1.7,-3){$p_5$}; 
       \node[right] at(5,0){$p_2$}; 
        \node[left] at(3.3,-3){$p_4$}; 
        \node[right] at(6.7,-3){$p_6$};
        \node[right] at(0,-2) {$z_1$};
         \node[right] at(5,-2) {$z_2$};
    \end{tikzpicture}
    \caption{The vertical critical graph of $\phi$ consists of two trivalent trees.}
    \label{fig:I-6-1}
\end{figure}

By applying the $\mathbb{H}^3$-action to $\phi$, we can adjust the heights of the three horizontal cylinders to be $1,2$ and $1$, respectively. 
We rotate the quadratic differential by $\rot$.
 The resulting quadratic differential, still denoted by $\phi$, 
 is a Jenkins-Strebel differential with a single cylinder that can be obtained by gluing the rectangle in Figure \ref{fig:I-6-a}.

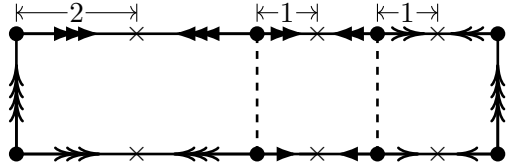
\begin{figure}[h]
    \centering
    \begin{tikzpicture}[line width=1pt,scale=0.8]
        \draw (0,0) rectangle (8,2);
        \node[ circle, fill=black,inner sep = 2pt] (A) at (0,0){};
        \node[ circle, fill=black,inner sep = 2pt] (A) at (8,0){};
         \node[ circle, fill=black,inner sep = 2pt] (A) at (0,2){};
        \node[ circle, fill=black,inner sep = 2pt] (A) at (4,0){}; 
        \node[ circle, fill=black,inner sep = 2pt] (A) at (4,2){};
        \node[ circle, fill=black,inner sep = 2pt] (A) at (8,2){};
        \node[ circle, fill=black,inner sep = 2pt] (A) at (6,0){};
        \node[ circle, fill=black,inner sep = 2pt] (A) at (6,2){};
        \node at (2,0) {$\times$};
        \node at (2,2) {$\times$};
        \node at (5,0) {$\times$};
        \node at (5,2) {$\times$};
        \node at (7,0) {$\times$};
        \node at (7,2){$\times$};
        \draw [-{Latex[length=3mm,width=2mm]}]  (4,0)--(4.7,0);
        \draw (4.7,0)--(5,0);
        \draw [-{Latex[length=3mm,width=2mm]}]  (6,0)--(5.3,0);
        \draw[-{To[length=3mm,width=2mm]}]  (6,0)--(6.7,0);
         \draw[-{To[length=3mm,width=2mm]}]  (8,0)--(7.3,0);
        \draw[-{To[length=3mm,width=2mm]}]  (6,2)--(6.6,2);
         \draw[-{To[length=3mm,width=2mm]}]  (6.5,2)--(6.8,2);
         \draw[-{To[length=3mm,width=2mm]}]  (8,2)--(7.4,2);
         \draw[-{To[length=3mm,width=2mm]}]  (7.4,2)--(7.2,2);

         \draw[-{Latex[length=3mm,width=2mm]}]  (4,2)--(4.6,2);
         \draw[-{Latex[length=3mm,width=2mm]}]  (4.5,2)--(4.8,2);
         \draw[-{Latex[length=3mm,width=2mm]}]  (6,2)--(5.4,2);
         \draw[-{Latex[length=3mm,width=2mm]}]  (5.4,2)--(5.2,2);

        \draw [-{To[length=3mm,width=2mm]}] (0,0)--(1,0);
        \draw [-{To[length=3mm,width=2mm]}] (1,0)--(1.2,0);
        \draw [-{To[length=3mm,width=2mm]}] (1.2,0)--(1.4,0);
        \draw [-{To[length=3mm,width=2mm]}] (4,0)--(3,0);
        \draw [-{To[length=3mm,width=2mm]}] (3,0)--(2.8,0);
        \draw [-{To[length=3mm,width=2mm]}] (2.8,0)--(2.6,0);

        \draw [-{Latex[length=3mm,width=2mm]}] (0,2)--(1,2);
        \draw [-{Latex[length=3mm,width=2mm]}] (1,2)--(1.2,2);
        \draw [-{Latex[length=3mm,width=2mm]}] (1.2,2)--(1.4,2);
        \draw [-{Latex[length=3mm,width=2mm]}] (4,2)--(3,2);
        \draw [-{Latex[length=3mm,width=2mm]}] (3,2)--(2.8,2);
        \draw [-{Latex[length=3mm,width=2mm]}] (2.8,2)--(2.6,2);

        \draw [-{To[length=3mm,width=2mm]}] (0,0)--(0,0.9);
         \draw [-{To[length=3mm,width=2mm]}] (0,0.9)--(0,1.1); 
       \draw [-{To[length=3mm,width=2mm]}] (0,1.1)--(0,1.3);
         \draw [-{To[length=3mm,width=2mm]}] (0,1.3)--(0,1.5); 

        \draw [-{To[length=3mm,width=2mm]}] (8,0)--(8,0.9);
         \draw [-{To[length=3mm,width=2mm]}] (8,0.9)--(8,1.1); 
       \draw [-{To[length=3mm,width=2mm]}] (8,1.1)--(8,1.3);
         \draw [-{To[length=3mm,width=2mm]}] (8,1.3)--(8,1.5); 
         
         \draw [line width =0.4pt](0,2.2)--(0,2.5);
         \draw [line width =0.4pt](2,2.2)--(2,2.5);
         \draw [line width =0.4pt][->](1.15,2.35)--(2,2.35);
         \draw [line width =0.4pt][<-](0,2.35)--(0.85,2.35);  
         \node at (1,2.35) {2};

         \draw [line width =0.4pt](4,2.2)--(4,2.5);
         \draw [line width =0.4pt](5,2.2)--(5,2.5);
         \draw [line width =0.4pt][->](4.65,2.35)--(5,2.35);
         \draw [line width =0.4pt][->](4.35,2.35)--(4,2.35);  
         \node at (4.5,2.35) {1};

         \draw [line width =0.4pt](6,2.2)--(6,2.5);
         \draw [line width =0.4pt](7,2.2)--(7,2.5);
         \draw [line width =0.4pt][->](6.65,2.35)--(7,2.35);
         \draw [line width =0.4pt][->](6.35,2.35)--(6,2.35);  
         \node at (6.5,2.35) {1};
         \draw [dashed] (4,0)--(4,2);
         \draw [dashed] (6,0)--(6,2);
    \end{tikzpicture}
        \caption{The rectangle in Case (I-3-a).}
    \label{fig:I-6-a}
\end{figure}

Now we apply $\begin{pmatrix}
    1 & -2/|\gamma_i|\\
    0 & 1
\end{pmatrix}$ to shear $\phi$ (the rectangle) so that it becomes the situation illustrated in Figure \ref{fig:I-6-a2}. The new Jenkins-Strebel differential, still denoted by $\phi$,  has a pair of vertical saddle connections of equal length that connect one zero to  the other, and each zero is connected to a pole by a vertical saddle connection.  Note that the union of the pair of vertical saddle connections connecting the zeros bounds a Jordan domain containing two poles.

\begin{figure}[h]
    \centering
    \begin{tikzpicture}[line width=1pt,scale=0.8]
        \draw (2,0)--(10,0)--(8,2)--(0,2)--(2,0);
        \node[ circle, fill=black,inner sep = 2pt] (A) at (2,0){};
        \node[ circle, fill=black,inner sep = 2pt] (A) at (10,0){};
         \node[ circle, fill=black,inner sep = 2pt] (A) at (0,2){};
        \node[ circle, fill=black,inner sep = 2pt] (A) at (6,0){}; 
        \node[ circle, fill=black,inner sep = 2pt] (A) at (4,2){};
        \node[ circle, fill=black,inner sep = 2pt] (A) at (8,2){};
        \node[ circle, fill=black,inner sep = 2pt] (A) at (8,0){};
        \node[ circle, fill=black,inner sep = 2pt] (A) at (6,2){};
        \node at (4,0) {$\times$};
        \node at (2,2) {$\times$};
        \node at (7,0) {$\times$};
        \node at (5,2) {$\times$};
        \node at (9,0) {$\times$};
        \node at (7,2){$\times$};
        \draw [-{Latex[length=3mm,width=2mm]}]  (6,0)--(6.7,0);
        \draw (6.7,0)--(7,0);
        \draw [-{Latex[length=3mm,width=2mm]}]  (8,0)--(7.3,0);
        \draw[-{To[length=3mm,width=2mm]}]  (8,0)--(8.7,0);
         \draw[-{To[length=3mm,width=2mm]}]  (10,0)--(9.3,0);
        \draw[-{To[length=3mm,width=2mm]}]  (6,2)--(6.6,2);
         \draw[-{To[length=3mm,width=2mm]}]  (6.5,2)--(6.8,2);
         \draw[-{To[length=3mm,width=2mm]}]  (8,2)--(7.4,2);
         \draw[-{To[length=3mm,width=2mm]}]  (7.4,2)--(7.2,2);

         \draw[-{Latex[length=3mm,width=2mm]}]  (4,2)--(4.6,2);
         \draw[-{Latex[length=3mm,width=2mm]}]  (4.5,2)--(4.8,2);
         \draw[-{Latex[length=3mm,width=2mm]}]  (6,2)--(5.4,2);
         \draw[-{Latex[length=3mm,width=2mm]}]  (5.4,2)--(5.2,2);

        \draw [-{To[length=3mm,width=2mm]}] (2,0)--(3,0);
        \draw [-{To[length=3mm,width=2mm]}] (3,0)--(3.2,0);
        \draw [-{To[length=3mm,width=2mm]}] (3.2,0)--(3.4,0);
        \draw [-{To[length=3mm,width=2mm]}] (6,0)--(5,0);
        \draw [-{To[length=3mm,width=2mm]}] (5,0)--(4.8,0);
        \draw [-{To[length=3mm,width=2mm]}] (4.8,0)--(4.6,0);

        \draw [-{Latex[length=3mm,width=2mm]}] (0,2)--(1,2);
        \draw [-{Latex[length=3mm,width=2mm]}] (1,2)--(1.2,2);
        \draw [-{Latex[length=3mm,width=2mm]}] (1.2,2)--(1.4,2);
        \draw [-{Latex[length=3mm,width=2mm]}] (4,2)--(3,2);
        \draw [-{Latex[length=3mm,width=2mm]}] (3,2)--(2.8,2);
        \draw [-{Latex[length=3mm,width=2mm]}] (2.8,2)--(2.6,2);

        \draw [-{To[length=3mm,width=2mm]}] (2,0)--(1.1,0.9);
         \draw [-{To[length=3mm,width=2mm]}] (1.1,0.9)--(0.9,1.1); 
       \draw [-{To[length=3mm,width=2mm]}] (0.9,1.1)--(0.7,1.3);
         \draw [-{To[length=3mm,width=2mm]}] (0.7,1.3)--(0.5,1.5); 

        \draw [-{To[length=3mm,width=2mm]}] (10,0)--(9.1,0.9);
         \draw [-{To[length=3mm,width=2mm]}] (9.1,0.9)--(8.9,1.1); 
       \draw [-{To[length=3mm,width=2mm]}] (8.9,1.1)--(8.7,1.3);
         \draw [-{To[length=3mm,width=2mm]}] (8.7,1.3)--(8.5,1.5); 
         
         \draw [line width =0.4pt](0,2.2)--(0,2.5);
         \draw [line width =0.4pt](2,2.2)--(2,2.5);
         \draw [line width =0.4pt][->](1.15,2.35)--(2,2.35);
         \draw [line width =0.4pt][<-](0,2.35)--(0.85,2.35);  
         \node at (1,2.35) {2};

         \draw [line width =0.4pt](4,2.2)--(4,2.5);
         \draw [line width =0.4pt](5,2.2)--(5,2.5);
         \draw [line width =0.4pt][->](4.65,2.35)--(5,2.35);
         \draw [line width =0.4pt][->](4.35,2.35)--(4,2.35);  
         \node at (4.5,2.35) {1};

         \draw [line width =0.4pt](6,2.2)--(6,2.5);
         \draw [line width =0.4pt](7,2.2)--(7,2.5);
         \draw [line width =0.4pt][->](6.65,2.35)--(7,2.35);
         \draw [line width =0.4pt][->](6.35,2.35)--(6,2.35);  
         \node at (6.5,2.35) {1};
         \draw [dashed] (2,0)--(2,2);
        \draw [dashed] (4,0)--(4,2);
        \draw [dashed] (6,0)--(6,2);
        \draw [dashed] (8,0)--(8,2);
         \draw [dashed] (1,1)--(1,2);
        \draw [dashed] (3,0)--(3,2);
        \draw [dashed] (5,0)--(5,2);
        \draw [dashed] (7,0)--(7,2);
        \draw [dashed] (9,0)--(9,1);
       
    \end{tikzpicture}
     \caption{Case (I-3-a) : The rectangle after the horocycle flow action. Dashed curve stands for vertical geodesic. }
    \label{fig:I-6-a2}
\end{figure}
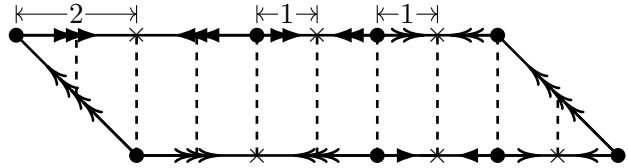

Rotating by $\rot$, we obtain a Jenkins-Strebel differential with two horizontal cylinders 
as shown in Figure \ref{fig:I-6-0}. This case is not included in Case (I-1) or Case (I-2).
But it can be transformed into a pillowcase of type (I-1-c).
We postpone the discussion of this situation to 
Case (I-4-a). 

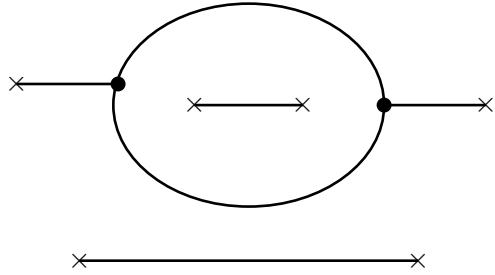
\begin{figure}[h]
    \centering
   \begin{tikzpicture}[line width=1pt,scale=0.9]
    \draw (0,0) ellipse (2cm and 1.5cm);
        \node[ circle, fill=black,inner sep = 2pt] (A) at (2,0){};
        \node[ circle, fill=black,inner sep = 2pt] (A) at (-1.93,0.31225){};
       \draw (2,0)--(3.5,0);
       \draw (-1.93,0.31225)--(-3.43,0.31225);
       \node  at(3.5,0){$\times$};
       \node  at(-3.43,0.31225){$\times$};
       \draw (-0.8,0)--(0.8,0);
        \node  at(0.8,0){$\times$};
       \node  at(-0.8,0){$\times$};
       \draw (-2.5,-2.3)--(2.5,-2.3);
         \node  at(-2.5,-2.3){$\times$};
       \node  at(2.5,-2.3){$\times$};    
   \end{tikzpicture}
   \caption{The resulting Jenkins-Strebel differential in Case (I-3-a).}
   \label{fig:I-6-0}
\end{figure}

\subsubsection*{\textbf{(I-3-b):}} Assume that $|\gamma_1|=|\gamma_3|>|\gamma_2|$.
By  appropriately shearing the horizontal cylinders, we can assume that 
$z_1$ is connected to $p_1$ by a vertical saddle connection and $z_2$ is connected to $p_4$
by a vertical saddle connection.
Denote by $\widetilde{p_1p_2}$ the horizontal saddle connection between $p_1$ and $p_2$,
and denote  by $\widetilde{p_3p_4}$ the horizontal saddle connection between $p_3$ and $p_4$.
Note that $|\widetilde{p_3p_4}| > |\gamma_2|$. There is a vertical leaf $\delta_1$ which  emanates from $z_1$ and intersects with the interior of $\widetilde{p_3p_4}$. 
We extend the leaf until it meets with $\gamma_3$. Similarly,
there is a vertical leaf $\delta_2$ which  emanates from $z_2$ and intersects with the interior of $\widetilde{p_1p_2}$.  We extend $\delta_2$ until it meets with $\gamma_1$. 
See Figure \ref{fig:I-6-2} for an illustration. 

\begin{figure}[h!]
\centering
\includegraphics[scale=0.3]{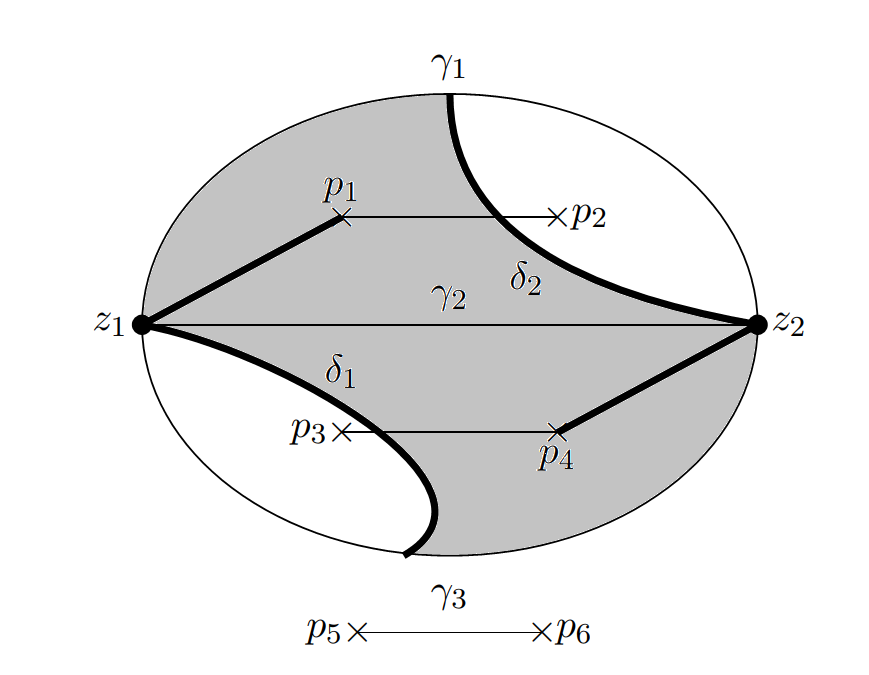}
    \caption{Case (I-3-b). Thick lines denote vertical saddle connections. The grey domain is a rectangle bounded by vertical and horizontal saddle connections.}
    \label{fig:I-6-2}
\end{figure}

Denote by $\widehat{z_1p_1}$ the vertical saddle connection between $z_1$ and $p_1$,
and denote by $\widehat{z_1p_4}$ the vertical saddle connection between $z_2$ and $p_4$.
As shown in Figure \ref{fig:I-6-2}, the vertical leaves $\delta_1, \widehat{z_1p_1}, \delta_2, \widehat{z_2p_4}$ together bound a vertical strip of width $|\gamma_2|$. The top and the bottom of the strip are two horizontal segments, one contained in $\gamma_1$ and the other contained in $\gamma_3$.
Let $\widetilde{p_5p_6}$ the horizontal saddle connection between $p_5$ and $p_6$. Note that 
$2 |\widetilde{p_5p_6}| = |\gamma_1| + |\gamma_3|$ and then $|\widetilde{p_5p_6}|>|\gamma_2|$. 
By  appropriately shearing the horizontal cylinder with the slit $\widetilde{p_5p_6}$, we can obtain a new surface so that the vertical strip can be enlarge to a vertical cylinder across $\widetilde{p_5p_6}$. Denote such a vertical cylinder by
$\Theta$.

Consider the new surface (still denoted by $\phi$). The complement of $\Theta$ consists of two disjoint Jordan domains. Each such domain is bounded by a vertical closed leaf passing through a zero, and the interior of each domain contains two poles. It turns out that,  in each domain, the two poles are connected by a vertical saddle connection. See Figure \ref{fig:I-3-b-2} for an illustration.
As a result, up to a rotation by $\rot$,
$\phi$ is a Jenkins-Strebel differential with three horizontal cylinders, and each zero is connected to itself by a horizontal saddle connection. Thus we can reduce this case to Case (I-1).

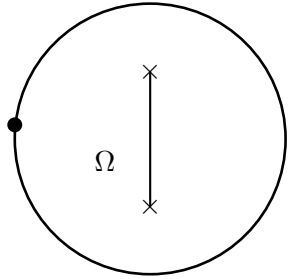
\begin{figure}
    \centering
   \begin{tikzpicture}[line width=1pt,scale=0.6]
    \draw (0,0) ellipse (3cm and 3cm);
        \node[ circle, fill=black,inner sep = 2pt] (A) at (-3,0.31225){};
          \node  at(0,1.5){$\times$};
            \node  at(0,-1.5){$\times$};
   \draw[thick] (0,1.5)-- (0,-1.5); 
   \node at(-1,-0.5){$\Omega$};
   \end{tikzpicture}
   \caption{The Jordan domain is bounded by a closed vertical saddle connection passing through a zero, with two poles in the interior.}
   \label{fig:I-3-b-2}
\end{figure}


\subsubsection*{\textbf{(I-3-c):}} Assume that $|\gamma_1|=|\gamma_2|<|\gamma_3|$.
Note that $|\widetilde{p_1p_2}|=|\gamma_1|=|\gamma_2|$, which is strictly less than
$|\widehat{p_3p_4}|=|\widehat{p_5p_6}|$.
By  appropriately shearing the horizontal cylinders, we can assume that there is a vertical cylinder $\Theta$ containing the interior of $\widetilde{p_1p_2}, \gamma_1$ and $\gamma_2$,
which crosses $\widetilde{p_3p_4}, \widetilde{p_5p_6}$ and $\gamma_3$. 
See Figure \ref{fig:I-6-3} for an illustration.

Now the complement of $\overline{\Theta}$ has two connected components, each of which is a Jordan domain with two poles 
in the interior. 
As a result, up to a rotation by $\rot$,
$\phi$ is a Jenkins-Strebel differential with three horizontal cylinders, and each zero is connected to itself by a horizontal saddle connection. Thus we can reduce this case to Case (I-1) again. 

\begin{figure}[h!]
\centering
\includegraphics[scale=0.3]{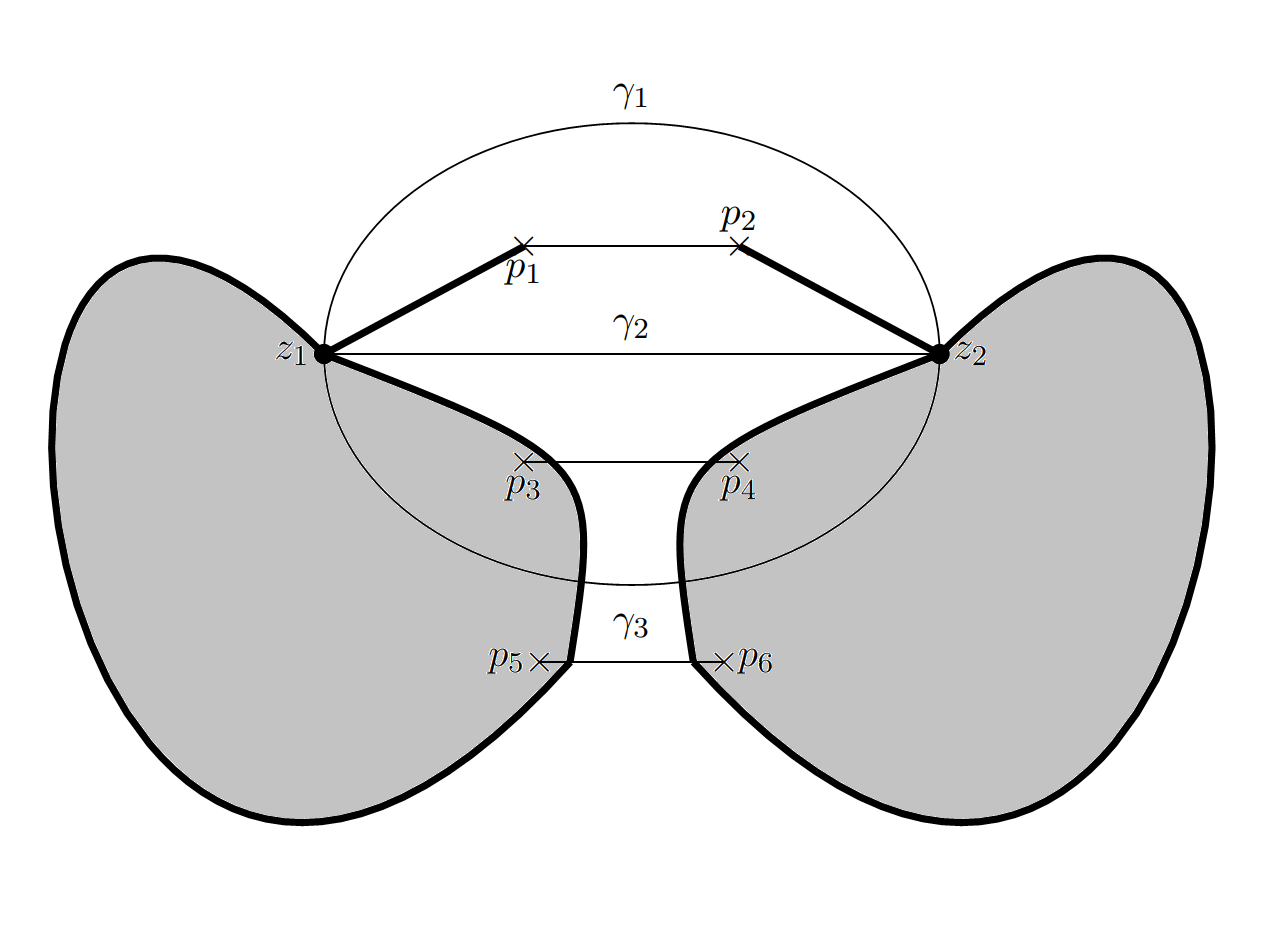}
    \caption{Case (I-3-c). Thick lines denote vertical saddle connections. The grey domains are the complement of the vertical cylinder $\Theta$.}
    \label{fig:I-6-3}
\end{figure}

\subsubsection*{\textbf{(I-3-d):}} Assume that $|\gamma_2|<|\gamma_3|<|\gamma_1|$.
The proof for Case (I-3-b) applies to this case. We omit the details.

\medskip
\noindent
{\bf Jenkins-Strebel differentials with two cylinders.}

\subsection*{Case (I-4): }
The two zeros $z_1$ and $z_2$ of $\phi$ are connected by a pair of horizontal saddle connections,
and each zero is connected to a pole by a horizontal saddle connection. 
Denote by $\gamma_1$ and $\gamma_2$, respectively, the two horizontal saddle connections ending
at  $z_1$ and $z_2$. There are two horizontal saddle connections, each of which connects a pair of poles, denoted by $\alpha$ and $\beta$. 

 There are several subcases.
 
 \subsubsection*{ \textbf{(I-4-a):}}
The first subcase is illustrated in Figure \ref{fig:I-3}. We assume that 
 $|\beta|>|\alpha|$ and  $|\gamma_1| = |\gamma_2|$. The union $\gamma_1\cup\gamma_2$
bounds a Jordan domain with $\alpha$ as a slit.

Up to an $\HP^2$-action, we can assume that $p_1$
is connected to $p_5$ by a vertical saddle connection, and 
$p_3$
is connected to $z_1$ by a vertical saddle connection.
Since $|\gamma_1|=|\gamma_2|$,
  $\phi$ is the double of the polygon shown in Figure \ref{fig:I-3-1}.
Rotating $\phi$ by $\rot$, we can reduce this case to a pillowcase of type (I-1-c).

\begin{figure}[h]
    \centering
   \begin{tikzpicture}[line width=1pt,scale=0.9]
    \draw (0,0) ellipse (2cm and 1.5cm);
        \node[ circle, fill=black,inner sep = 2pt] (A) at (2,0){};
        \node[ circle, fill=black,inner sep = 2pt] (A) at (-1.93,0.31225){};
       \draw (2,0)--(3.5,0);
       \draw (-1.93,0.31225)--(-3.43,0.31225);
       \node  at(3.5,0){$\times$};
       \node[right]  at(3.5,0){$p_2$};
       \node  at(-3.43,0.31225){$\times$};
       \node[left]  at(-3.43,0.31225){$p_1$};
      \node at (0,1.2) {$\gamma_2$};
       \node at (0,-1.2) {$\gamma_1$};
       \draw (-0.8,0)--(0.8,0);
        \node  at(0.8,0){$\times$};
        \node[right]  at(0.8,0){$p_4$};
       \node  at(-0.8,0){$\times$};
       \node[left]  at(-0.8,0){$p_3$};
       \draw (-2.5,-2.3)--(2.5,-2.3);
         \node  at(-2.5,-2.3){$\times$};
       \node  at(2.5,-2.3){$\times$};  
       \node[left]  at(-2.5,-2.3){$p_5$};  
      \node[right]  at(2.5,-2.3){$p_6$};  
       \node at (0,0.2) {$\alpha$};
       \node at (0,-2.6){$\beta$};
           \node at (-2.2,0){$z_1$};
       \node[right] at (2,0.2){$z_2$};
   \end{tikzpicture}
   \caption{Jenkins-Strebel differential in Case (I-4-a) or Case (I-4-b).}
   \label{fig:I-3}
\end{figure}
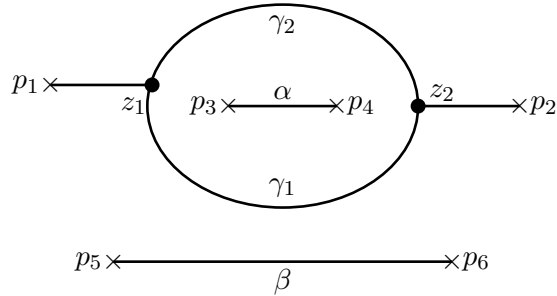

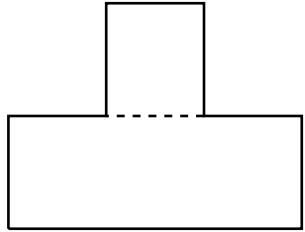
\begin{figure}[h]
    \centering
    \begin{tikzpicture}[line width=1pt]
     \draw (0,0)--(3.9,0)--(3.9,1.5)--(2.6,1.5)--(2.6,3)--(1.3,3)--(1.3,1.5)--(0,1.5)--(0,0);
     \draw [dashed] (2.6,1.5)--(1.3,1.5);   
    \end{tikzpicture}
    \caption{Polygon in Case (I-4-a).}
    \label{fig:I-3-1}
\end{figure}

\subsubsection*{\textbf{(I-4-b):}}
The second subcase is the same as (I-4-a), except that 
in this case we don't have $|\gamma_1| = |\gamma_2|$. 
Without loss of generality,
we may assume that 
$|\gamma_1|>|\gamma_2|$. As before, 
up to an $\HP^2$-action, we can assume that the pole $p_1$
is connected to  $p_5$ by a vertical saddle connection, and 
$p_3$
is connected to  $z_1$ by a vertical saddle connection.

In this case, $\phi$ has two vertical cylinders, denoted by $\Theta_1$ and $\Theta_2$. See Figure \ref{fig:I-3-2} for an illustration. The first cylinder, denoted by $\Theta_1$, contains 
the interior of $\gamma_2$, and it crosses $\alpha$ and $\beta$. The second cylinder, denoted by $\Theta_2$,
is bounded by a vertical closed saddle connection passing through $z_1$ and the vertical saddle connection connecting $p_1$ to $p_5$.

\begin{figure}[h]
\centering
\begin{tikzpicture}[line width=1pt,scale=1]
\draw (0,0) rectangle (4,1.5);
\draw [-{Stealth[length=3mm,width=2mm]}] (0,0)--(2.5,0);
\draw(2.5,0)--(3.5,0);
\draw [-{Stealth[length=3mm,width=2mm]}] (4,0)--(3.6,0);
\draw (3.6,0)--(3.5,0);
\draw [-{To[length=3mm,width=2mm]}] (0,0)--(0,1.0);
\draw (0,1)--(0,1.5);

\draw[fill=green,opacity=0.3](1,0)--(0,0)--(0,1.5)--(1,1.5);
\draw[fill=blue,opacity=0.3](2,0)--(1,0)--(1,1.5)--(2,1.5);
\draw  [-{Stealth[length=3mm,width=2mm]}](0,1.5)--(0.6,1.5);
\draw  [-{Stealth[length=3mm,width=2mm]}](0.6,1.5)--(0.8,1.5);
\draw  (0.8,1.5)--(1,1.5);
\draw  (1,1.5)--(3,1.5);
\draw [-{Latex[length=3mm,width=2mm]}]  (4,1.5)--(3.4,1.5);
\draw (3.4,1.5)--(3,1.5);
\draw [-{To[length=3mm,width=2mm]}] (4,0)--(4,1.0);
\draw [-{To[length=3mm,width=2mm]}] (4,1.0)--(4,1.2);
\draw (4,1.2)--(4,1.5);
\node (A1) at(0,0){$\times$}; 
\node(A1) at(4,1.5){$\times$};
\node (A1) at(0,1.5){$\times$}; 
\node (A1) at(3.5,0){$\times$}; 
\node[ circle, fill=black,inner sep = 2pt] (A) at (1,1.5){};
\node[ circle, fill=black,inner sep = 2pt] (A) at (3,1.5){};

\draw (4.5,0) rectangle (7.5,1.5);
\draw [-{To[length=3mm,width=2mm]}] (4.5,0)--(4.5,1);
\draw [-{To[length=3mm,width=2mm]}] (7.5,0)--(7.5,1.0);
\draw [-{To[length=3mm,width=2mm]}] (7.5,1.0)--(7.5,1.2);
\draw [-{Stealth[length=3mm,width=2mm]}] (4.5,0)--(6.5,0);
\draw  [-{Stealth[length=3mm,width=2mm]}](4.5,1.5)--(5.1,1.5);
\draw  [-{Stealth[length=3mm,width=2mm]}](5.1,1.5)--(5.3,1.5);
\draw (5.3,1.5)--(5.5,1.5);
\draw (6.5,0)--(7.5,0);
\draw (7.5,1.2)--(7.5,1.5);
\draw (4.5,1)--(4.5,1.5);

\draw [-{Latex[length=3mm,width=2mm]}]  (7.5,1.5)--(6.9,1.5);
\draw (6.9,1.5)--(6.5,1.5);
\node (A1) at(4.5,0){$\times$}; 
\node (A1) at(7.5,1.5){$\times$};
\node (A1) at(4.5,1.5){$\times$}; 
\node[ circle, fill=black,inner sep = 2pt] (A) at (6.5,1.5){};
\node[ circle, fill=black,inner sep = 2pt] (A) at (5.5,1.5){};

\draw [fill=green,opacity=0.3](5.5,0)--(4.5,0)--(4.5,1.5)--(5.5,1.5);
\draw [fill=blue,opacity=0.3](6.5,0)--(5.5,0)--(5.5,1.5)--(6.5,1.5);

\draw (3,-2) rectangle (6,-0.5);

\draw [-{To[length=3mm,width=2mm]}] (3,-2)--(3,-1.2);
\draw[-{To[length=3mm,width=2mm]}]  (3,-1.2)--(3,-1);
\draw[-{To[length=3mm,width=2mm]}]  (3,-1)--(3,-0.8);

\draw (3,-0.8)--(3,-0.5);

\draw [-{To[length=3mm,width=2mm]}] (3,-0.5)--(3.6,-0.5);
\draw[-{To[length=3mm,width=2mm]}]  (3.6,-0.5)--(3.8,-0.5);
\draw[-{To[length=3mm,width=2mm]}]  (3.8,-0.5)--(4,-0.5);
\draw[-{To[length=3mm,width=2mm]}]  (4,-0.5)--(4.2,-0.5);
\draw (4.2,-0.5)--(4.5,-0.5);

\draw [-{To[length=3mm,width=2mm]}] (6,-2)--(6,-1.2);
\draw[-{To[length=3mm,width=2mm]}]  (6,-1.2)--(6,-1);

\draw [-{To[length=3mm,width=2mm]}] (6,-1)--(6,-0.8);
\draw (6,-0.8)--(6,-0.5);

\draw [-{To[length=3mm,width=2mm]}] (6,-0.5)--(5.4,-0.5);
\draw[-{To[length=3mm,width=2mm]}]  (5.4,-0.5)--(5.2,-0.5);
\draw[-{To[length=3mm,width=2mm]}]  (5.2,-0.5)--(5,-0.5);
\draw[-{To[length=3mm,width=2mm]}]  (5,-0.5)--(4.8,-0.5);
\draw (4.8,-0.5)--(4.5,-0.5);

\draw [-{Latex[length=3mm,width=2mm]}]  (5.5,1.5)--(6.1,1.5);
\draw [-{Latex[length=3mm,width=2mm]}]  (6.1,1.5)--(6.3,1.5);
\draw (6.3,1.5)--(6.5,1.5);

\draw [-{Latex[length=3mm,width=2mm]}]  (6,-2)--(5.4,-2);
\draw [-{Latex[length=3mm,width=2mm]}]  (5.4,-2)--(5.2,-2);
\draw (5.2,-2)--(5,-2);

\draw [-{Latex[length=3mm,width=2mm]}]  (1,1.5)--(2,1.5);
\draw [-{Latex[length=3mm,width=2mm]}]  (2,1.5)--(2.2,1.5);
\draw [-{Latex[length=3mm,width=2mm]}]  (2.2,1.5)--(2.4,1.5);
\draw (2.4,1.5)--(3,1.5);

\draw [-{Latex[length=3mm,width=2mm]}]  (3,-2)--(4,-2);
\draw [-{Latex[length=3mm,width=2mm]}]  (4,-2)--(4.2,-2);
\draw [-{Latex[length=3mm,width=2mm]}]  (4.2,-2)--(4.4,-2);
\draw (4.4,-2)--(5,-2);

\draw[dashed,green](1,1.5)--(1,0);
\draw[dashed,green](5.5,1.5)--(5.5,0);
\draw[dashed,blue](2,1.5)--(2,0);
\draw[dashed,blue](6.5,1.5)--(6.5,0);
\draw[dashed,blue](4,-2)--(4,-0.5);
\draw[dashed,blue](5,-2)--(5,-0.5);

\node[ circle, fill=black,inner sep = 2pt] (A) at (3,-2){};
\node[ circle, fill=black,inner sep = 2pt] (A) at (6,-2){};
\node[ circle, fill=black,inner sep = 2pt] (A) at (5,-2){};
\node (A1) at(3,-0.5){$\times$}; 
\node (A1) at(4.5,-0.5){$\times$};
\node (A1) at(6,-0.5){$\times$};
\node at (2,1.8) {$\gamma_1$};
\node at (6,1.8) {$\gamma_2$};
\node at (4,-2.3) {$\gamma_1$};
\node at (5.5,-2.3) {$\gamma_2$};
\node[above] at (3.5,-0.5) {$\alpha$};
\node[below] at (0.5,0) {$\beta$};
\draw[fill=blue,opacity=0.3](4,-2)--(3,-2)--(3,-0.5)--(4,-0.5);
\draw[fill=blue,opacity=0.3](5,-2)--(6,-2)--(6,-0.5)--(5,-0.5);
\end{tikzpicture}
    \caption{Case (I-4-b): The surface is obtained by gluing the three rectangles 
    along the sizes with the same arrow. The green shadow domain stands for the first vertical cylinder and the blue shadow domain stands for the second vertical cylinder.}
    \label{fig:enter-label}
    \label{fig:I-3-2}
\end{figure}
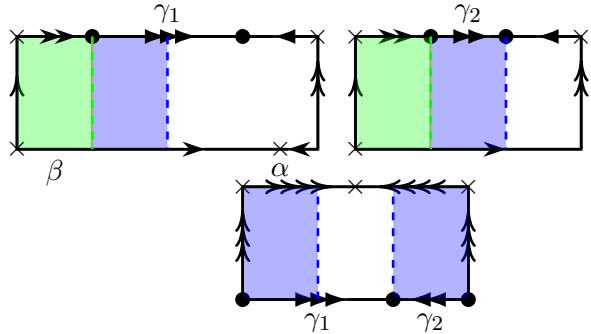

Note that the complement of $\overline{\Theta_1}\cup\overline{\Theta_2}$ is a Jordan domain
$\Omega$, which contains three poles in the interior. The boundary $\partial\Omega$
is a closed vertical saddle connection passing through $z_2$. There is a unique vertical critical leaf called $\delta$ emanating from $z_2$.  See Figure \ref{fig:I-4-0} for an illustration.

\begin{figure}
    \centering
   \begin{tikzpicture}[line width=1pt,scale=0.6]
    \draw (0,0) ellipse (3cm and 3cm);
        \node[ circle, fill=black,inner sep = 2pt] (A) at (-3,0.31225){};
        \node  at(1,0){$\times$};
          \node  at(0,1.5){$\times$};
            \node  at(0,-1.5){$\times$};
   \draw[thick] (-3,0.31225) .. controls(-2,-1) and (-1,2) .. (-0.5,0); 
   \node at(-1,-0.5){$\Omega$};
   \end{tikzpicture}
   \caption{The Jordan domain is bounded by a closed vertical saddle connection passing through $z_2$.}
   \label{fig:I-4-0}
\end{figure}
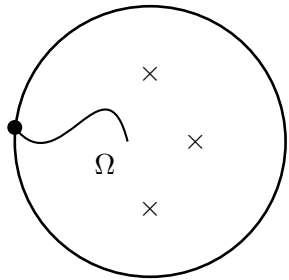

 If $\delta$ terminates at a pole, then the vertical foliation of $\phi$ is Jenkins-Strebel. In this case,  we can reduce $\phi$ to Case (I-1) using the  $\rot$-action. If $\delta$ does not terminate at a pole, then it is minimal in $\Omega$ (This case is similar to a minimal foliation in a four-punctured sphere). Replace $\phi$ by $\rot \cdot \phi$.
 By Theorem \ref{thm:SW},   there
is a new Jenkins-Strebel differential $\psi$ in $\overline{H\cdot \phi}$ which has at least three horizontal cylinders. Since any Jenkins-Strebel differential on $S_{0,6}$ has at most three cylinders, $\psi$ has exactly three horizontal cylinders. Thus we can reduce $\phi$ to the case of Jenkins-Strebel differential with three cylinders.

\subsubsection*{\textbf{(I-4-c):}} 
The third subcase is illustrated in Figure \ref{fig:I-3-c}, where
the union $\gamma_1\cup\gamma_2$
separates the surface into two components, each containing three poles. We assume that $|\gamma_1|=|\gamma_2|$. 

Up to an $\HP^2$-action, we can assume $p_5$
is connected to $p_1$ by a vertical saddle connection, and 
 $p_3$
is connected to  $z_1$ by a vertical saddle connection.
In this case $\phi$ is the double of the polygon 
 illustrated in Figure \ref{fig:I-3-c(P)}.
Rotate  $\phi$ by $\rot$. 
Then the resulting Jenkins-Strebel differential has three horizontal cylinders, with each zero connected to itself by a horizontal saddle connection. This is exactly the one described in Case (I-1-c).

\begin{figure}[h]
    \centering
   \begin{tikzpicture}[line width=1pt,scale=0.9]
    \draw (0,0) ellipse (3cm and 1.5cm);
        \node[ circle, fill=black,inner sep = 2pt] (A) at (3,0){};
        \node[ circle, fill=black,inner sep = 2pt] (A) at (-2.93,0.31225){};
       \draw (3,0)--(2,0);
       \draw (-2.93,0.31225)--(-4.43,0.31225);
       \node  at(2,0){$\times$};
       \node  at(-4.43,0.31225){$\times$};
       \node[left]  at(-4.43,0.31225){$p_1$};
        \node[left]  at(2,0){$p_2$};
      \node at (0,1.2) {$\gamma_2$};
       \node at (0,-1.2) {$\gamma_1$};
       \draw (-0.8,0)--(0.8,0);
        \node  at(0.8,0){$\times$};
       \node  at(-0.8,0){$\times$};
        \node[right]  at(0.8,0){$p_4$};
       \node[left]  at(-0.8,0){$p_3$};
       \draw (-4,-2.3)--(2,-2.3);
         \node  at(-4,-2.3){$\times$};
       \node  at(2,-2.3){$\times$};   
         \node[left]  at(-4,-2.3){$p_5$};
       \node[right]  at(2,-2.3){$p_6$};   
       \node at (0,0.2) {$\alpha$};
       \node at (0,-2.6){$\beta$};
        \node[right] at (3,0) {$z_2$};
          \node[right] at (-2.93,0.31225) {$z_1$}; 
   \end{tikzpicture}
   \caption{Case (I-4-c).}
   \label{fig:I-3-c}
\end{figure}
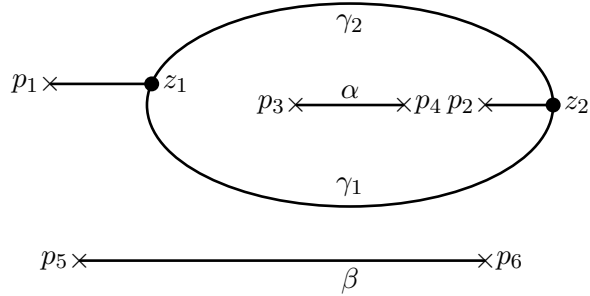

\begin{figure}[h]
    \centering
    \begin{tikzpicture}[line width=1pt,scale=1.2]
     \draw (0,0)--(2.5,0)--(2.5,1.5)--(4,1.5)--(4,3)--(1.5,3)--(1.5,1.5)--(0,1.5)--(0,0);
     \draw [dashed] (2.6,1.5)--(1.3,1.5);

    \end{tikzpicture}
    \caption{Polygon in Case (I-4-c).}
     \label{fig:I-3-c(P)}
\end{figure}
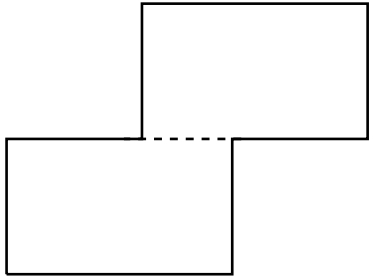


\subsubsection*{\textbf{(I-4-d):}} 
The remaining subcase is the same as (I-4-c), except that we don't have 
 $|\gamma_1|=|\gamma_2|$. 
Without loss of generality, we may assume that $|\gamma_1| > |\gamma_2|$.
We can shear the horizontal cylinders appropriately to transform $\phi$ into the one as shown in Figure \ref{fig:I-3-d}. There are two vertical cylinders $\Theta_1$ and $\Theta_2$ such that the complement of  $\overline{\Theta_1}\cup\overline{ \Theta_2}$ is a Jordan domain containing 
three poles. Replace $\phi$ by $\rot \cdot \phi$. 
Similar to our discussion in Case (I-4-b), either $\phi$ has three or two horizontal cylinders.  
In the later case, we can apply Theorem \ref{thm:SW} to produce a new Jenkins-Strebel differential $\psi$ with three horizontal cylinders. 

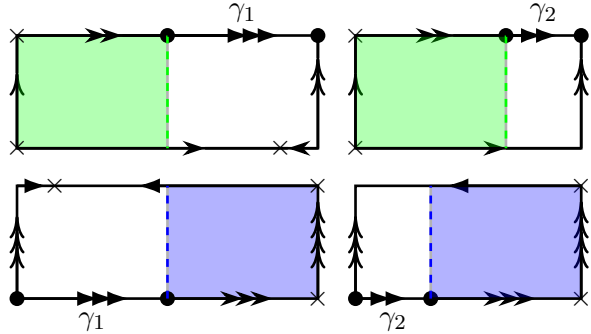
\begin{figure}[h]
\centering
\begin{tikzpicture}[line width=1pt]
\draw (0,0) rectangle (4,1.5);
\draw [-{Stealth[length=3mm,width=2mm]}] (0,0)--(2.5,0);
\draw(2.5,0)--(3.5,0);
\draw [-{Stealth[length=3mm,width=2mm]}] (4,0)--(3.6,0);
\draw (3.6,0)--(3.5,0);
\draw [-{To[length=3mm,width=2mm]}] (0,0)--(0,1.0);
\draw (0,1)--(0,1.5);

\draw[fill=green,opacity=0.3](2,0)--(0,0)--(0,1.5)--(2,1.5)--(2,0);
\draw  [-{Stealth[length=3mm,width=2mm]}](0,1.5)--(1.2,1.5);
\draw  [-{Stealth[length=3mm,width=2mm]}](1.2,1.5)--(1.4,1.5);
\draw  (1.4,1.5)--(2,1.5);
\draw  (1,1.5)--(3,1.5);
\draw (3.4,1.5)--(3,1.5);
\draw [-{To[length=3mm,width=2mm]}] (4,0)--(4,1.0);
\draw [-{To[length=3mm,width=2mm]}] (4,1.0)--(4,1.2);
\draw (4,1.2)--(4,1.5);
\node (A1) at(0,0){$\times$}; 
\node[ circle, fill=black,inner sep = 2pt] (A) at (4,1.5){};
\node (A1) at(0,1.5){$\times$}; 
\node (A1) at(3.5,0){$\times$}; 
\node[ circle, fill=black,inner sep = 2pt] (A) at (2,1.5){};

\draw (4.5,0) rectangle (7.5,1.5);
\draw [-{To[length=3mm,width=2mm]}] (4.5,0)--(4.5,1);
\draw [-{To[length=3mm,width=2mm]}] (7.5,0)--(7.5,1.0);
\draw [-{To[length=3mm,width=2mm]}] (7.5,1.0)--(7.5,1.2);
\draw [-{Stealth[length=3mm,width=2mm]}] (4.5,0)--(6.5,0);
\draw  [-{Stealth[length=3mm,width=2mm]}](4.5,1.5)--(5.6,1.5);
\draw  [-{Stealth[length=3mm,width=2mm]}](5.6,1.5)--(5.8,1.5);
\draw (5.8,1.5)--(6.5,1.5);
\draw (6.5,0)--(7.5,0);
\draw (7.5,1.2)--(7.5,1.5);
\draw (4.5,1)--(4.5,1.5);

\draw (6.9,1.5)--(6.5,1.5);
\node (A1) at(4.5,0){$\times$}; 
\node (A1) at(4.5,1.5){$\times$}; 
\node[ circle, fill=black,inner sep = 2pt] (A) at (6.5,1.5){};

\node[ circle, fill=black,inner sep = 2pt] (A) at (7.5,1.5){};
\draw [fill=green,opacity=0.3](6.5,0)--(4.5,0)--(4.5,1.5)--(6.5,1.5)--(6.5,0);

\draw[dashed,green](2,1.5)--(2,0);
\draw[dashed,green](6.5,1.5)--(6.5,0);

\node at (3,1.8) {$\gamma_1$};
\node at (7,1.8) {$\gamma_2$};

\draw [-{Latex[length=3mm,width=2mm]}]  (6.5,1.5)--(7.0,1.5);
\draw [-{Latex[length=3mm,width=2mm]}]  (7.0,1.5)--(7.2,1.5);

\draw [-{Latex[length=3mm,width=2mm]}]  (2,1.5)--(3.1,1.5);
\draw [-{Latex[length=3mm,width=2mm]}]  (3.1,1.5)--(3.3,1.5);
\draw [-{Latex[length=3mm,width=2mm]}]  (3.3,1.5)--(3.5,1.5);

\draw (0,-2) rectangle (4,-0.5);


\draw [-{Latex[length=3mm,width=2mm]}]  (0,-2)--(1.1,-2);
\draw [-{Latex[length=3mm,width=2mm]}]  (1.1,-2)--(1.3,-2);
\draw [-{Latex[length=3mm,width=2mm]}]  (1.3,-2)--(1.5,-2);

\draw [-{Latex[length=3mm,width=2mm]}]  (0,-0.5)--(0.4,-0.5);
\draw [-{Latex[length=3mm,width=2mm]}]  (4,-0.5)--(1.6,-0.5);

\draw[fill=blue,opacity=0.3](2,-2)--(2,-0.5)--(4,-0.5)--(4,-2)--(2,-2);

\draw  [-{Stealth[length=3mm,width=2mm]}](2,-2)--(3,-2);
\draw  [-{Stealth[length=3mm,width=2mm]}](3,-2)--(3.2,-2);
\draw  [-{Stealth[length=3mm,width=2mm]}](3.2,-2)--(3.4,-2);

\draw [-{To[length=3mm,width=2mm]}] (0,-2)--(0,-1.3);
\draw [-{To[length=3mm,width=2mm]}] (0,-1.3)--(0,-1.1);
\draw [-{To[length=3mm,width=2mm]}] (0,-1.1)--(0,-0.9);

\node(A1) at(4,-2){$\times$};
\node (A1) at(0.5,-0.5){$\times$}; 
\node (A1) at(4,-0.5){$\times$}; 
\node[ circle, fill=black,inner sep = 2pt] (A) at (0,-2){};
\node[ circle, fill=black,inner sep = 2pt] (A) at (2,-2){};
\draw [-{To[length=3mm,width=2mm]}] (4,-2)--(4,-1.3);
\draw [-{To[length=3mm,width=2mm]}] (4,-1.3)--(4,-1.1);
\draw [-{To[length=3mm,width=2mm]}] (4,-1.1)--(4,-0.9);

\draw [-{To[length=3mm,width=2mm]}] (4,-0.9)--(4,-0.7);

\draw (4.5,-2) rectangle (7.5,-0.5);
\draw [-{To[length=3mm,width=2mm]}] (4.5,-2)--(4.5,-1.3);
\draw [-{To[length=3mm,width=2mm]}] (4.5,-1.3)--(4.5,-1.1);
\draw [-{To[length=3mm,width=2mm]}] (4.5,-1.1)--(4.5,-0.9);

\draw [-{To[length=3mm,width=2mm]}] (7.5,-2)--(7.5,-1.3);
\draw [-{To[length=3mm,width=2mm]}] (7.5,-1.3)--(7.5,-1.1);
\draw [-{To[length=3mm,width=2mm]}] (7.5,-1.1)--(7.5,-0.9);

\draw [-{To[length=3mm,width=2mm]}] (7.5,-0.9)--(7.5,-0.7);

\draw [-{Latex[length=3mm,width=2mm]}] (7.5,-0.5)--(5.7,-0.5);
\draw  [-{Latex[length=3mm,width=2mm]}](4.5,-2)--(5,-2);
\draw  [-{Latex[length=3mm,width=2mm]}](5,-2)--(5.2,-2);

\draw  [-{Stealth[length=3mm,width=2mm]}](5.5,-2)--(6.5,-2);
\draw  [-{Stealth[length=3mm,width=2mm]}](6.5,-2)--(6.7,-2);
\draw  [-{Stealth[length=3mm,width=2mm]}](6.7,-2)--(6.9,-2);


\node (A1) at(7.5,-2){$\times$}; 
\node (A1) at(7.5,-0.5){$\times$}; 
\node[ circle, fill=black,inner sep = 2pt] (A) at (4.5,-2){};

\node[ circle, fill=black,inner sep = 2pt] (A) at (5.5,-2){};
\draw [fill=blue,opacity=0.3](7.5,-2)--(5.5,-2)--(5.5,-0.5)--(7.5,-0.5)--(7.5,-2);

\draw[dashed,blue](2,-2)--(2,-0.5);
\draw[dashed,blue](5.5,-2)--(5.5,-0.5);

\node at (1,-2.3) {$\gamma_1$};
\node at (5,-2.3) {$\gamma_2$};

\end{tikzpicture}
    \caption{Case (I-4-d) : The green shadow domain stands for the first vertical cylinder and the blue shadow domain stands for the second vertical cylinder.}
    \label{fig:I-3-d}
\end{figure}


\subsection*{Case \textbf{(I-5):}} The two zeros of $\phi$ are connected by a unique horizontal saddle connection, and one of the zeros is connected  to itself by a horizontal saddle connection, see Figure \ref{fig:I-5}.

\begin{figure}[h]
    \centering
    \begin{tikzpicture}[line width=1pt]
        \draw (0,0)--(1.3,-1.3)--(3,-1.3);
        \draw (-0.3, -2.5)--(1.3,-1.3);
        \draw (4.5,-1.3) ellipse (1.5cm and 1 cm);
        \draw (4.0,-1.3)--(5,-1.3);
        \draw (0.5,-3.3)--(4.5,-3.3);
         \node[ circle, fill=black,inner sep = 2pt] (A) at (1.3,-1.3){};
        \node[ circle, fill=black,inner sep = 2pt] (A) at (3,-1.3){};
                \node at(0,0){$\times$}; 
        \node  at(-0.3, -2.5){$\times$};
\node at(4.0,-1.3){$\times$}; 
\node  at(5,-1.3){$\times$}; 
\node  at(0.5,-3.3){$\times$}; 
        \node  at(4.5,-3.3){$\times$};
    \end{tikzpicture}
    \caption{Case (I-5).}
    \label{fig:I-5}
\end{figure}
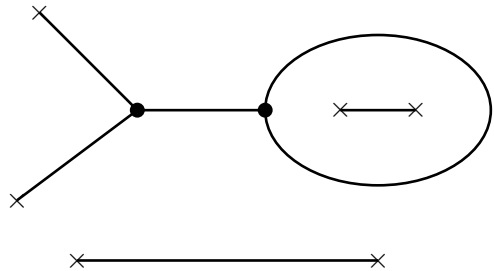

 By  appropriately shearing the horizontal cylinders, we can assume that each zero
  is connected  to itself by a vertical saddle connection. These two vertical saddle connections bounds a cylinder, whose complement is two disjoint Jordan domains. Each such Jordan domain contains three poles in its interior. 
  
  Rotating  $\phi$ by $\rot$,
   the resulting quadratic differential (still denoted by $\phi$) may be Jenkins-Strebel or not.
     If it is Jenkins-Strebel, then it must have three horizontal cylinders. Then we can reduce $\phi$
     to Case (I-1). 
      If it is not Jenkins-Strebel, then we apply Theorem \ref{thm:SW} to produce a new Jenkins-Strebel differential $\psi$ in $\overline{H\cdot \phi}$ with three horizontal cylinders.

\subsection*{Case \textbf{(I-6):}} In this case,
one of the zeros say $z_1$ is connected to itself by a horizontal saddle connection, 
denoted by $\beta$. The other zero $z_2$ is connected to three poles by horizontal saddle connections.

\begin{figure}[h]
    \centering
    \begin{tikzpicture}[line width=1pt,scale=1]
        \draw (0,0)--(-0.11,-1.5)--(-1.2,-3.2);
        \draw (-0.11,-1.5)--(1.2,-3.1);
        \draw (2.1,-2.3) ellipse (0.8cm and 1.0cm);
        \draw (2.1,-1.3)--(2.0,0);
        \draw (2.1,-2)--(2.1,-2.8);
         \node[ circle, fill=black,inner sep = 2pt] (A) at (-0.11,-1.5){};
        \node[ circle, fill=black,inner sep = 2pt] (A) at (2.1,-1.3){};
         \node at(0,0){$\times$}; 
        \node  at(-1.2,-3.2){$\times$};
        \node  at(1.2,-3.1){$\times$}; 
       \node  at(2.0,0){$\times$}; 
        \node  at(2.1,-2){$\times$}; 
        \node  at(2.1,-2.8){$\times$}; 
        \node at (1.3,-4.3) {(a)};

        \node[left] at (-0.11,-1.5) {$z_2$};
        \node[right] at (2.1,-1.1) {$z_1$};
         \node[above] at(0,0){$p_2$};
         \node[above] at(2,0){$p_1$};
           \node[above] at(2.1,-2){$p_5$};
             \node[below] at(2.1,-2.8){$p_6$};
         \node[below] at(-1.2,-3.2){$p_3$};
         \node[below] at(1.2,-3.1){$p_4$};
            \node[below] at(2.1,-3.3){$\beta$};
      \node[right] at(2.1,-2.3){$\alpha$};
    \end{tikzpicture}

\caption{Case (I-6-a). }
       \label{fig:I-7-1} 
\end{figure}
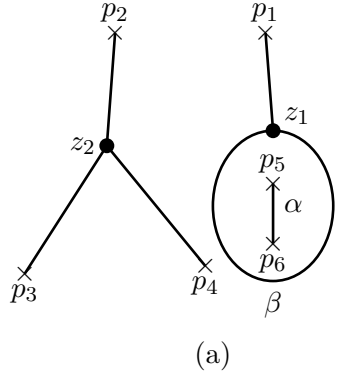

\subsubsection*{\textbf{(I-6-a):}} See Figure \ref{fig:I-7-1} for an illustration. 
Denote by $\gamma_1$ the horizontal saddle connections connecting $z_1$ 
to $p_1$. 
Denote by $\gamma_i$ the horizontal saddle connection connecting $z_2$ to $p_i$ 
by $\gamma_i$, where $i=2,3,4$. 
Without loss of generality, we may assume that 
$|\gamma_2| = \max\{ |\gamma_2|, |\gamma_3|, |\gamma_4|\}.$
Denote by $\alpha$ the horizontal saddle connections connecting $p_5$ 
to $p_6$. 
We have the equation
\begin{equation}\label{equ:length}
  |\gamma_2|+|\gamma_3|+|\gamma_4| = |\gamma_1| + |\alpha|.
\end{equation}

There are several subcases.

We first consider the subcase that $|\gamma_1|< |\gamma_2|$. 
By  appropriately shearing the two horizontal cylinders, we may assume that $z_1$ is connected to
$p_5$ by a vertical saddle connection, and $p_1$ is connected to $p_2$ by a vertical saddle connection. Denote the above vertical saddle connections by $\widehat{z_1p_5}$ and $\widehat{p_1p_2}$, respectively.

There is a vertical cylinder with leaves surrounding
$\widehat{p_1p_2}$, denoted by $\Theta_1$. Since $|\gamma_1|<|\gamma_2|$, 
$\Theta_1$ must be bounded by a vertical closed saddle connection through $z_1$.  

 Now, we examine the vertical leaves near the boundary of $\overline{\Theta_1}$. Any vertical leaf  sufficiently close 
 to $\partial\overline{\Theta_1}$ travels along $\partial\overline{\Theta_1}$, and when it gets close to $z_1$,
 it turns right (or left) and follows the direction of $\widehat{z_1p_5}$ until it is close to $p_5$. Then it turns around
 and returns to $z_1$. If it keeps going, it will return to the starting point.
 
 We conclude that there is a vertical cylinder $\Theta_2$ adjoint to $\Theta_1$. Since $|\gamma_2|-|\gamma_1|< |\alpha|$, the zero $z_2$ must lie on the boundary of $\Theta_2$.  This is illustrated in Figure \ref{fig:I-7-a}.
 The complement of $\overline{\Theta_1}\cup \overline{\Theta_2}$, denoted by $\Omega$,  is a Jordan domain containing three poles.
Therefore, either $\Omega$ corresponds to the third vertical cylinder $\Theta_3$ or the vertical leaf in $\Omega$ emanating from $z_2$ is minimal. 
Replace $\phi$ by $\rot \cdot \phi$. 
In the later case, we apply Theorem \ref{thm:SW} to produce a new Jenkins-Strebel differential $\psi$ in $\overline{H\cdot \phi}$ with three horizontal cylinders.

\begin{figure}[h]
    \centering
    \begin{tikzpicture}[line width=2pt,scale=0.7]
    \draw (0,0) ellipse (2cm and 1.5cm);
        \node[ circle, fill=black,inner sep = 2pt] (A) at (2,0){};
        \node at (-1,-0.5) {$\times$}; 
        \node at (1,0.5) {$\times$}; 
        \draw (-1,-0.5).. controls (-0.5,0.5) and (0.5,-0.5) .. (1,0.5);
        \draw (2,0) -- (3,0);
         \draw (0,0) ellipse (4cm and 2cm);
        \node at (3,0) {$\times$}; 
        \node at (0,0.5) {$\Theta_1$};
            \node at (-3,0) {$\Theta_2$};
             \node[right] at (2,0.2) {$z_1$};
    \node[right] at (3,0) {$p_5$};
       \node[ circle, fill=black,inner sep = 2pt] (A) at (-4,0){};
        \node[right] at (-4,0) {$z_2$};
        \draw (-4,0) .. controls (-4.5,0.5) and (-5,-0.5) .. (-5.5,0);
    \end{tikzpicture}
\caption{Case (I-6-a): The  vertical cylinders  $\Theta_1$ and $\Theta_2$. }
       \label{fig:I-7-a} 
\end{figure}
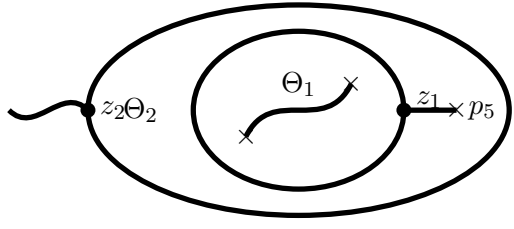

Next, we consider the subcase that $|\gamma_1|= |\gamma_2|$. We can  appropriately shearing the horizontal cylinders so that $p_1$ is connected to
$p_2$ by a vertical saddle connection.
Now we shear the horizontal cylinder bounded by $\alpha$ and $\beta$ so that $p_4$ is connected to
$p_6$ by a vertical saddle connection. Denote the above vertical saddle connections by $\widehat{p_1p_2}$ and $\widehat{p_4p_6}$, respectively.

\begin{figure}[h]
    \centering
    \begin{tikzpicture}[scale=0.9]
        \draw (0,0)--(-0.11,-1.5)--(-1.2,-3.2);
        \draw (-0.11,-1.5)--(0.2,-3.1);
        \draw (2.1,-3.3) ellipse (0.8cm and 2.0cm);
        \draw (2.1,-1.3)--(2.0,0.2);
        \draw (2.1,-2)--(2.1,-3.8);
         \node[ circle, fill=black,inner sep = 2pt] (A) at (-0.11,-1.5){};
        \node[ circle, fill=black,inner sep = 2pt] (A) at (2.1,-1.3){};
         \node at(0,0){$\times$}; 
        \node  at(-1.2,-3.2){$\times$};
        \node  at(0.2,-3.1){$\times$}; 
       \node  at(2.0,0.2){$\times$}; 
        \node  at(2.1,-2){$\times$}; 
        \node  at(2.1,-3.8){$\times$}; 
        \node[left] at (-0.11,-1.5) {$z_2$};
        \node[above] at (1.9,-1.3) {$z_1$};
         \node[above] at(0,0){$p_2$};
         \node[above] at(2,0.2){$p_1$};
           \node[right] at(2.1,-2){$p_5$};
             \node[below] at(2.1,-3.8){$p_6$};
         \node[below] at(-1.2,-3.2){$p_3$};
         \node[below] at(0.2,-3.1){$p_4$};
         \draw[line width=2pt] (0,0) -- (2,0.2);
          \draw[line width=2pt] (-0.11,-1.5) -- (2.1,-1.3);
          \draw[line width=2pt] (-0.11,-1.5) .. controls (-4,3) and (6,3).. (2.1,-1.3);
            \draw[line width=2pt] (0.2,-3.1) -- (2.1,-3.8);
    \draw[line width=2pt]  (-0.11,-1.5) .. controls (-1.5, -4) and (2.1, -5.6) .. (2.5,-3.8);
     \draw[line width=2pt] (2.1,-1.3) .. controls (1.1,-2) and (2.8,-3.2) .. (2.5,-3.8) ;
    \end{tikzpicture}

\caption{Thick lines denote vertical leaves. There are two vertical cylinders, with closed leaves surrounding $\widehat{p_1p_2}$ and $\widehat{p_4p_6}$, respectively. }
       \label{fig:I-7-3} 
\end{figure}
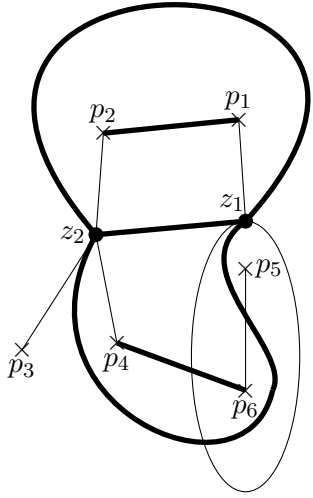

Similar to the subcase before, we 
have a vertical cylinder $\Theta_1$ with close leaves surrounding $\widehat{p_1p_2}$. Since $|\gamma_1|= |\gamma_2|$, both $z_1$ and $z_2$ lie on the boundary of $\Theta_1$.
On the other hand, there is a vertical cylinder $\Theta_2$ with close leaves surrounding $\widehat{p_4p_6}$. Since $|\gamma_4|< |\alpha|$, the boundary of $\Theta_2$ must meets $z_2$.
So  $\Theta_2$ must be bounded by a union of two vertical saddle connections, each of which connects $z_1$ and $z_2$. 
This is illustrated in Figure \ref{fig:I-7-1}. 

 Now the complement of $\overline{\Theta_1}\cup \overline{\Theta_2}$ is a Jordan domain $\Omega$, which contains the two poles $p_3$ and $p_5$. 
Thus the vertical foliation of $\phi$ is Jenkins-Strebel.  By applying the
$\rot$-action on $\phi$, we can reduce $\phi$ to the
Case (I-2).

The remaining subcase is that $|\gamma_1|> |\gamma_2|$. 
First we assume that $|\gamma_2|>|\alpha|$. 
By appropriately shearing $\phi$  along the horizontal cylinders we can assume that $p_2$
and $p_6$ is connected by a vertical saddle connection, and $z_1$ is connected by a vertical saddle connection to $p_5$. Denote the above vertical saddle connections by $\widehat{p_2p_6}$
and $\widehat{z_1p_5}$, respectively.
This is illustrated in Figure \ref{fig:I-7-2}.

\begin{figure}[h!]
\centering
\includegraphics[scale=0.4]{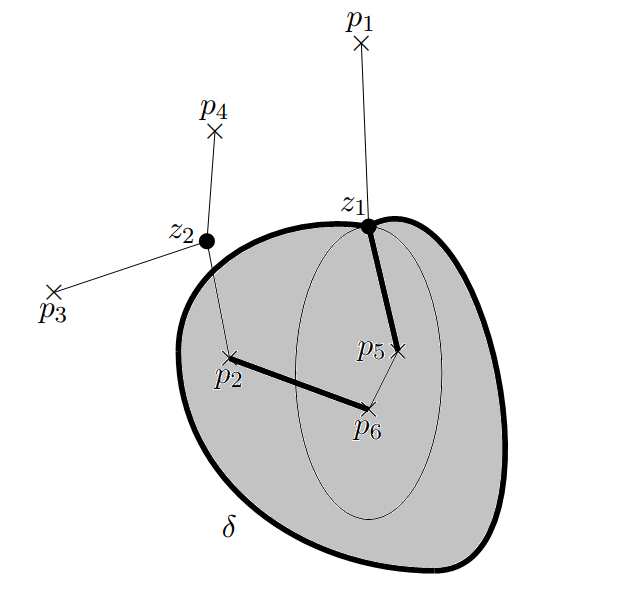}
\caption{Thick lines denote vertical leaves. There is a vertical cylinder with close leaves surrounding $\widehat{p_2p_6}$ and bounded by $\delta$. }
       \label{fig:I-7-2} 
\end{figure}

Consider the vertical cylinder $\Theta_1$ with leaves surrounding 
$\widehat{p_2p_6}$.
Since $|\gamma_2|>|\alpha|$, $z_1$ is contained in the boundary of $\Theta_1$. 
Moreover, $\overline{\Theta_1}$ is bounded by a vertical closed saddle connection
passing through $z_1$, denoted by  $\delta$. 
Since $z_1$ is a simple zero, the remaining angle at $z_1$ outside $\Theta_1$ is $\pi$.
As a result, there must be another vertical cylinder $\Theta_2$ adjoint to $\Theta_1$. The cylinder 
$\Theta_2$ is bounded by $\delta$ and another vertical  closed saddle connection passing through 
$z_2$. The complement of $\overline{\Theta_1} \cup \overline{\Theta_2}$ is a Jordan domain containing at most three poles. Replace $\phi$ by $\rot\cdot \phi$. Either $\Omega$ corresponds to the third horizontal cylinder or we can apply Theorem \ref{thm:SW} to produce a new Jenkins-Strebel differential $\psi$ in $\overline{H\cdot \phi}$ with three horizontal cylinders.

Now we assume that $|\alpha|\geq |\gamma_2|$. By our assumption, 
$|\alpha|\geq |\gamma_2|\geq |\gamma_3|\geq |\gamma_4.$ The inequality together with Equation \ref{equ:length} implies $|\gamma_3|+|\gamma_4|\geq |\gamma_1|$ or, equivalently, 
$|\gamma_1|-|\gamma_3| \leq |\gamma_4|$. 
By appropriately shearing $\phi$  along the horizontal cylinders,  we can assume that $p_3$
and $p_1$ is connected by a vertical saddle connection, and $z_2$ is connected by a vertical saddle connection to $p_6$. Denote the above vertical saddle connections by $\widehat{p_1p_2}$
and $\widehat{z_2p_6}$, respectively. Let $\Theta_1$ be the vertical cylinder with closed leaves surrounding $\widehat{p_1p_2}$, which is bounded by a vertical closed saddle connection passing through $z_2$. 

Now we examine the neighborhood of $\overline{\Theta_1}\cup \widehat{z_2p_6}$.
Since $|\gamma_1|-|\gamma_3| \leq |\gamma_4|\leq |\gamma_2|\leq |\alpha|$, there must be a vertical cylinder 
$\Theta_2$ adjoint to $\Theta_1$, which is illustrated in Figure \ref{fig:I-7-3}.
The width of $\Theta_2$ is $|\gamma_1|-|\gamma_3|$. It crosses $\gamma_2$, $\gamma_4$
and $\alpha$. Note that the complement of $\overline{\Theta_1}\cup \widehat{\Theta_2}$ is non-empty.
For otherwise, $|\gamma_2|=|\gamma_3|=|\gamma_4|=|\alpha|$, which implies $|\gamma_1|=|\gamma_2|$,
which is contradicted to our assumption that $|\gamma_1|> |\gamma_2|$. 
 Replace $\phi$ by $\rot\cdot \phi$.
 Now either the complement of $\overline{\Theta_1}\cup \overline{\Theta_2}$ corresponds to the third horizontal cylinder or we can apply Theorem \ref{thm:SW} to produce a new Jenkins-Strebel differential $\psi$ in $\overline{H\cdot \phi}$ with three horizontal cylinders. 

\begin{figure}[h!]
\centering
\includegraphics[scale=0.3]{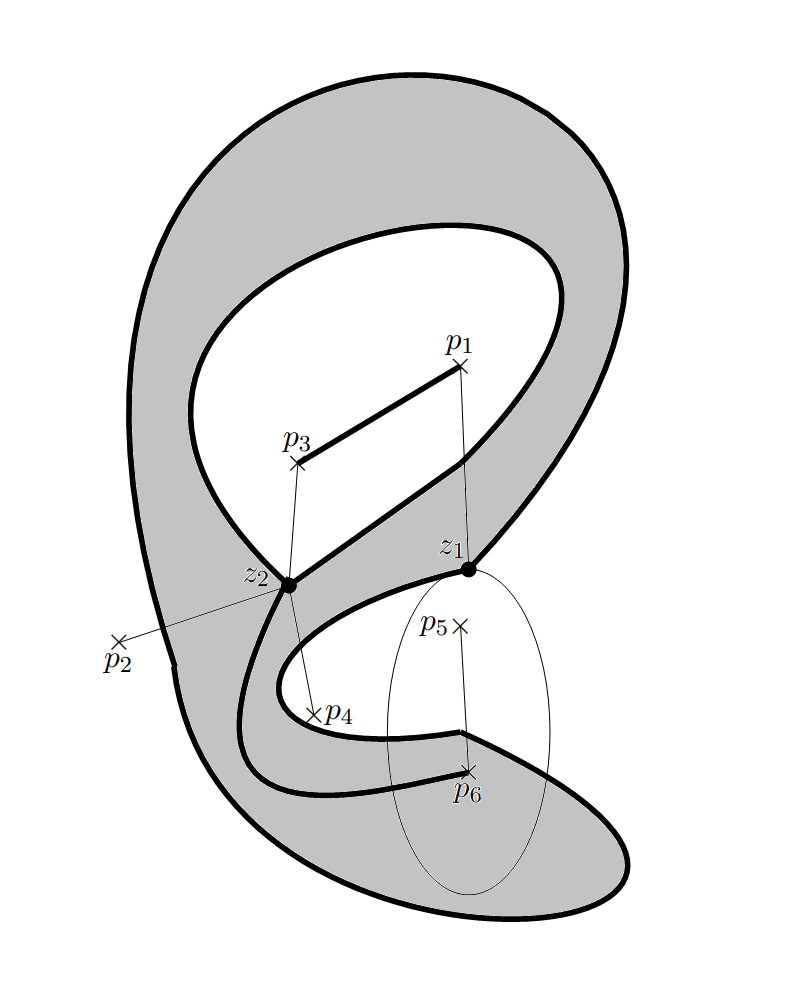}
\caption{Thick lines denote vertical leaves. The grey cylinder is $\Theta_2$. }
       \label{fig:I-7-3} 
\end{figure}

\subsubsection*{\textbf{(I-6-b):}} See Figure \ref{fig:I-7-b} for an illustration. 
In this case we have the equation 
$|\alpha|=\sum\limits_{i=1}^{4}|\gamma_i|$.
By  appropriately shearing the horizontal cylinders, we may assume that $p_1$  is connected to $p_5$ by a vertical saddle connection, $z_1$
is connected to $p_4$ by a vertical saddle connection.

Now, there exists a vertical cylinder $\Theta_1$ with closed leaves surrounding the vertical saddle connection $\widehat{p_1p_5}$. The domain $\overline{\Theta_1}$ is bounded by a vertical closed saddle connection passing through $z_1$. Denote such a  vertical closed saddle connection by $\delta$. 
Since $|\alpha|-|\gamma_1|\geq |\gamma_4|$ and $|\gamma_2|\geq |\gamma_3|\geq |\gamma_4|$,
there exists a second vertical cylinder $\Theta_2$ adjoint to $\Theta_1$ with width $|\gamma_4|$, which crosses $\gamma_2, \gamma_3$ and $\alpha$. 

Replace $\phi$ by $\rot \cdot \phi$. Now either the  complement of $\overline{\Theta_1} \cup \overline{\Theta_2}$ corresponds to the third horizontal cylinder or we can apply Theorem \ref{thm:SW} to produce a new Jenkins-Strebel differential $\psi$ in $\overline{H\cdot \phi}$ with three horizontal cylinders.

\begin{figure}[h]
    \centering
    \begin{tikzpicture}[line width=1pt,scale=1]
        
        \draw (5.2,-2.3) ellipse (1.2cm and 1.5cm);
        \draw (5.2,-0.8)--(5.1,0.5);
        \draw (5.2,-2.3)--(5.0,-1.5);
        \draw (5.2,-2.3)--(4.7,-3);
        \draw (5.2,-2.3)--(5.8,-2.7);
        \draw (6.8,-0.7)--(6.9,-2.7);
         \node[ circle, fill=black,inner sep = 2pt] (A) at (5.2,-0.8){};
        \node[ circle, fill=black,inner sep = 2pt] (A) at (5.2,-2.3){};
         \node  at(5.1,0.5){$\times$}; 
        \node  at(5.0,-1.5){$\times$};
        \node  at(4.7,-3){$\times$}; 
       \node  at(5.8,-2.7){$\times$}; 
        \node  at(6.8,-0.7){$\times$}; 
        \node  at(6.9,-2.7){$\times$}; 
        \node at (6.3,-4.3) {(b)};
           \node[left] at (5.2,-0.6) {$z_1$};
           \node[left] at (5.2,-2.3) {$z_2$};
             \node[left] at (5.2,-0.6) {$z_1$};
        \node[above] at (5.1,0.5) {$p_1$};
        \node[right] at (6.8,-1.6) {$\alpha$};
         \node[above] at (6.8,-0.7) {$p_5$};
           \node[below] at (6.8,-2.7) {$p_6$};
        \node[right] at (4.7,-4) {$\beta$};
           \node[above] at (5.0,-1.5) {$p_4$};
           \node[right] at (5.8,-2.7) {$p_3$};
            \node[below] at (4.7,-3) {$p_2$};
    \end{tikzpicture}

\caption{Case (I-6-b). }

       \label{fig:I-7-b} 
\end{figure}
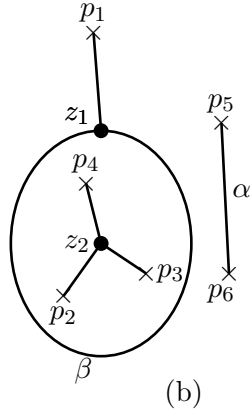

\medskip
\noindent
{\bf Jenkins-Strebel differentials with a single cylinder.}

\subsection*{Case \textbf{(I-7):}}  The two zeros of $\phi$ are connected by a unique horizontal saddle connection, and each zero is connected to two poles by horizontal saddle connection, see Figure \ref{fig:I-4}. Denote the saddle connection between the two zeros by $\alpha$. There is a horizontal connection ending at two poles, denoted by $\beta$.

\begin{figure}[h]
    \centering
    \begin{tikzpicture}[line width=1pt]
        \draw (0,0)--(1.5,-1.5)--(4,-1.5)--(5,-0.5);
        \draw (-0.3,-3.1)--(1.5,-1.5);
        \draw (4,-1.5)--(5.5,-3);
        \node[ circle, fill=black,inner sep = 2pt] (A) at (1.5,-1.5){};
        \node[ circle, fill=black,inner sep = 2pt] (A) at (4,-1.5){};
        \draw (0.6,-4.0)--(4.5,-4.0);
        \node  at(0,0){$\times$}; 
        \node  at(5,-0.5){$\times$};
        \node[below] at (3,-1.5) {$\alpha$};
         \node[below] at (2,-3.5) {$\beta$};
\node  at(-0.3,-3.1){$\times$}; 
\node  at(0.6,-4.0){$\times$}; 
\node  at(4.5,-4.0){$\times$}; 
        \node at(5.5,-3){$\times$};
    \end{tikzpicture}
\caption{Case (I-7).}
       \label{fig:I-4}
\end{figure}
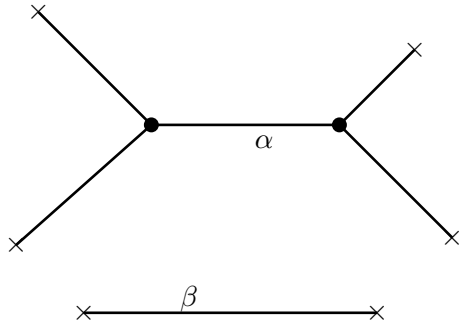

In this case,  $\phi$ is a Jenkins-Strebel differential with a single horizontal cylinder. 
Note that $|\alpha| < |\beta|$. By appropriately
shearing the horizontal cylinder, we may assume that each zero is connected  to itself by  a vertical saddle connection. The two vertical saddle connections together bound a vertical cylinder $\Theta$,
and the complement of $\overline{\Theta}$ consists of two Jordan domains.
In each such Jordan domain, there are three poles. 

Replace $\phi$ by $\rot \cdot \phi$. Now either each  complement component of $\overline{\Theta}$ corresponds to a horizontal cylinder 
or we can apply Theorem \ref{thm:SW} to produce a new Jenkins-Strebel differential $\psi$ in $\overline{H\cdot \phi}$ with three horizontal cylinders.

\subsection*{Case \textbf{(I-8):}} This is the final case, illustrated in Figure \ref{fig:I-8}.
 Each zero is connected to three poles by horizontal saddle connections. 
 By  appropriately shearing this horizontal cylinder, we can assume that 
 $p_1$ and $p_4$ are connected by  a vertical saddle connection.
 
 Denote by $\gamma_1, \gamma_2$ and $\gamma_3$ the horizontal saddle connections joining
 $z_1$ to $p_1, p_2$ and $p_3$, respectively.  Denote by $\gamma_4, \gamma_5$ and $\gamma_6$ the horizontal saddle connections joining
 $z_2$ to $p_4, p_5$ and $p_6$, respectively. 
 Then we have the equation 
 $$|\gamma_1|+|\gamma_2|+|\gamma_3|= |\gamma_4|+|\gamma_5|+|\gamma_6|.$$
 
 If all $\gamma_i$ have equal length, then by using the $\rot$-action the resulting quadratic differential
 is exactly the same as the one in Case (I-3). As a result, by abuse of notation,
  we can assume that $|\gamma_1|> |\gamma_4|$. Since $p_1$ and $p_4$ are connected by  a vertical saddle connection, there exists a vertical cylinder $\Theta$ with closed leaves 
 surrounding the  vertical saddle connection $\widehat{p_1p_4}$. 
 
 Replace $\phi$ by $\rot\cdot \phi$. 
 Now either the horizontal foliation of $\phi$ is Jenkins-Strebel or we can apply Theorem \ref{thm:SW} to produce a new Jenkins-Strebel differential $\psi$ in $\overline{H\cdot \phi}$ with two or three horizontal cylinders. Thus,  we can reduce $\phi$ to  the previous cases.

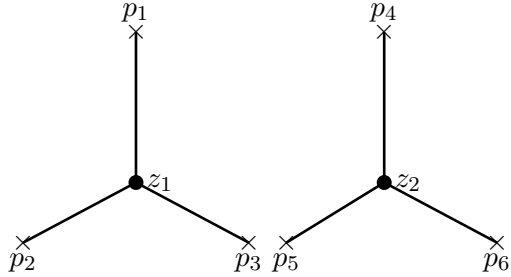
\begin{figure}[h]
    \centering
    \begin{tikzpicture}[line width=1pt]
        \draw (0,0)--(0,-2)--(-1.5,-2.8);
        \draw (0,-2)--(1.5,-2.8);
        \draw (3.3,0)--(3.3,-2)--(2,-2.8);
        \draw (3.3,-2)--(4.8,-2.8);
        \node[ circle, fill=black,inner sep = 2pt] (A) at (0,-2){};
        \node[ circle, fill=black,inner sep = 2pt] (A) at (3.3,-2){};
         \node  at(0,0){$\times$}; 
        \node  at(-1.5,-2.8){$\times$};
        \node  at(1.5,-2.8){$\times$}; 
       \node  at(3.3,0){$\times$}; 
        \node  at(2,-2.8){$\times$}; 
        \node  at(4.8,-2.8){$\times$}; 
          \node[above]  at(0,0){$p_1$}; 
        \node[below]  at(-1.5,-2.8){$p_2$};
        \node[below]  at(1.5,-2.8){$p_3$}; 
       \node[above]  at(3.3,0){$p_4$}; 
        \node[below]  at(2,-2.8){$p_5$}; 
        \node[below]  at(4.8,-2.8){$p_6$}; 
        \node[right]  at(0,-2){$z_1$}; 
          \node[right]  at(3.3,-2){$z_2$};
    \end{tikzpicture}
    \caption{ Case (I-8).}
    \label{fig:I-8}
\end{figure}


We conclude from the above discussion that

\begin{Proposition}\label{prop:staircase}
Let $\phi\in \Q(1^2, -1^6)$. There exists a Jenkins-Strebel differential $\psi\in \Q(1^2, -1^6)$ 
such that 
\begin{itemize}
  \item $\psi$ is a staircase.
  \item If $\tau^\phi$ admits a holomorphic retraction, so does $\tau^\psi$. 
\end{itemize}
\end{Proposition}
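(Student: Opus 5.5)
The plan is to assemble the case analysis of this section into a chain of moves, each of which preserves the property ``$\tau$ admits a holomorphic retraction''. There are three such moves. The first is passing to an element of the orbit closure $\overline{\operatorname{GL}_2^+(\mathbb{R})\cdot\phi}$, which is allowed by Proposition \ref{lem:GM-orbit}; this covers both the horocycle flow and the rotation $\rot$, since both lie in $\operatorname{GL}_2^+(\mathbb{R})$. The second is passing to an element of the $\mathbb{H}^k$-orbit of a Jenkins--Strebel differential with $k$ cylinders, which is allowed by the corollary of Markovic's criterion (Theorem \ref{thm:GM-JS}). The third is composing finitely many such moves, which is allowed because each step preserves the property. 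So it suffices to show that, starting from any $\phi\in\Q(1^2,-1^6)$, finitely many such moves reach a staircase. Each of these moves also preserves the stratum, since the $\operatorname{GL}_2^+(\mathbb{R})$ and $\mathbb{H}^k$ actions keep the zero and pole orders, and orbit closures are taken inside the stratum.

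\emph{Reduction to Jenkins--Strebel.} By Theorem \ref{thm:SW}(1), $\overline{H\cdot\phi}$ contains a Jenkins--Strebel $\psi_0\in\Q(1^2,-1^6)$, so we may assume $\phi$ is Jenkins--Strebel. Next, enumerate the possible horizontal critical graphs $\Gamma(\phi)$ for two simple zeros and six simple poles on the sphere. Each zero is trivalent and each pole is univalent, and the number of cylinders is $1$, $2$ or $3$. This enumeration produces exactly Cases (I-1) through (I-8), and the completeness of the list is the first thing to verify carefully.

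\emph{Three-cylinder cases.} Case (I-1-a) shears directly to a staircase. Cases (I-1-b) and (I-1-c) reach (I-1-a) by the explicit sequences of shears, height changes and $\rot$ described above. Case (I-2) reduces to (I-1-b) after one rotation. Case (I-3) reduces to (I-1) in subcases (b), (c) and (d), and in subcase (a) it reduces to the two-cylinder configuration (I-4-a).

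\emph{Two- and one-cylinder cases.} Each of the remaining cases, (I-4) through (I-8), is treated in one of two ways. In some configurations the vertical foliation after shearing is Jenkins--Strebel, and then $\rot$ produces a Jenkins--Strebel differential of an earlier type: either a three-cylinder one, or (I-4-a)/(I-4-c), which become pillowcases of type (I-1-c). Otherwise, after applying $\rot$, the union of the vertical cylinders is not dense; the complement is a Jordan domain carrying a minimal component. Theorem \ref{thm:SW}(2) then yields a Jenkins--Strebel $\psi$ in $\overline{H\cdot\rot\phi}$ with strictly more cylinders. Applying this at most twice ends in the three-cylinder cases, because three is the maximum number of cylinders on $S_{0,6}$.

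\emph{Termination and the main obstacle.} The process must not loop. I would use the number of cylinders as a monotone quantity. Every move out of a one- or two-cylinder case either goes through a rotation that lands in a case already resolved (namely (I-1), (I-2), or (I-4-a)/(I-4-c), and these lead to (I-1-c)) or strictly increases the cylinder count. The only exception is (I-8) with equal $|\gamma_i|$, which goes to (I-3); but (I-3-a) then goes to (I-4-a), and (I-4-a) goes to (I-1-c), so there is no cycle. I expect the main obstacle to be the geometric claims inside the two-cylinder subcases, for example (I-6-a), where one must check that the vertical cylinders $\Theta_1,\Theta_2$ really exist with the stated boundaries, using the length inequalities such as \eqref{equ:length}. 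Once those are granted, the proposition follows by organizing the reductions into the directed acyclic graph above, with the staircase (I-1-a) as its unique sink.
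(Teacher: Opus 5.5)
Your proposal is correct and is essentially the paper's own argument. You reduce to a Jenkins--Strebel differential via Theorem \ref{thm:SW}(1), run through the critical-graph types (I-1)--(I-8) using the $\mathbb{H}^k$-actions, the rotation $\rot$ and Theorem \ref{thm:SW}(2), and transport the retraction property along each move by Proposition \ref{lem:GM-orbit} and the corollary of Theorem \ref{thm:GM-JS}, exactly as the paper does.

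What you add is explicit acyclicity bookkeeping, which the paper leaves implicit. One correction there: the step that actually lowers the cylinder count is (I-3-a)$\to$(I-4-a), not the (I-8) step, which raises it. Your chain (I-3-a)$\to$(I-4-a)$\to$(I-1-c)$\to$(I-1-a) already shows that no cycle arises, so the conclusion is unaffected.
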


\subsection{Case (II): $\Q(2, -1^6)$.}\label{Case-II}

There is a canonical double cover $\eta: S_{2,0} \to S_{0,6}$
branched over the marked points. By pulling back
Riemann surface structures, $\eta$ induces a holomorphic isometry  of Teichm\"uller spaces (with respect to the Teichm\"uller metric)
$$\eta^* : \Tei_{0,6} \cong \Tei_{2,0}.$$
Moreover, any quadratic differential on $X\in \Tei_{0,6}$ is pulled back  to a 
quadratic differential $\psi$ on $Y= \eta^*(X)$. 

\begin{figure}[h]
    \centering
    \begin{tikzpicture}[line width=1pt,scale=0.75]

        \draw [line width=1pt] (-1.2,4.5) arc (40:320:1.5cm and 1.0cm);       
       \draw [line width=1pt] (-1.2,4.5) arc (230:310:1.5cm and 1.0cm);
        \draw [line width=1pt] (-1.2,3.22) arc (130:50:1.5cm and 1.0cm);     
        \draw [line width=1pt] (+0.7,4.5) arc (140:0:1.5cm and 1.0cm);         
       \draw [line width=1pt] (+0.7,3.22) arc (220:360:1.5cm and 1.0cm);  
      \draw [line width=1pt] (-1.6,4) arc (30:150:0.8cm and 0.5cm);
    \draw [line width=1pt] (-1.6,4) arc (330:190:0.8cm and 0.5cm);
    \draw [line width=1pt] (-1.6,4) arc (330:350:0.8cm and 0.5cm);
     \draw [line width=1pt] (2.5,4) arc (30:150:0.8cm and 0.5cm);
    \draw [line width=1pt] (2.5,4) arc (330:190:0.8cm and 0.5cm);
    \draw [line width=1pt] (2.5,4) arc (330:350:0.8cm and 0.5cm);
    \draw [line width=1pt][-{To[length=3mm,width=2mm]}] 
    (0,3)--(0,1.7);
    \node at (0.2,2.3) {$f$};
    \node [ circle, fill=black,inner sep = 1pt] (A) at (-3.7,3.96) {};
     \node [ circle, fill=black,inner sep = 1pt] (A) at (-3.2,3.96) {};   
    \node [ circle, fill=black,inner sep = 1pt] (A) at (-1.4,3.96) {};
     \node [ circle, fill=black,inner sep = 1pt] (A) at (0.8,3.96) {};
    \node [ circle, fill=black,inner sep = 1pt] (A) at (2.7,3.96) {};
    \node [ circle, fill=black,inner sep = 1pt] (A) at (3.2,3.96) {};
    \draw [line width=0.4pt](-4.3,3.96)--(-3.9,3.96); 
    \draw [line width=0.4pt][->] (-3.94,3.9) arc (350:25:0.1cm and 0.22cm);      
        \draw [line width=1pt](0,0) circle (1.5);
        \draw [line width=0.2pt,dashed] (1.5,0) arc (0:180:1.5cm and 0.4cm);
        \draw [line width=0.2pt] (-1.5,0) arc (180:360:1.5cm and 0.4cm);
        \node at (0.7,0.8){$\times$};
        \node at (0,1){$\times$};
        \node at (-0.7,0.8){$\times$};
        \node at (0.7,-0.8){$\times$};
        \node at (0,-1){$\times$};
        \node at (-0.7,-0.8){$\times$};
    \end{tikzpicture}
        \caption{The canonical double cover $\eta: S_{2,0}\to S_{0,6}$.}
    \label{fig:enter-label}
\end{figure}
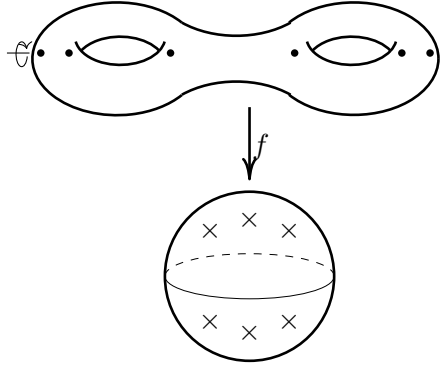

If $(X,\phi)\in \Q(2, -1^6)$, then  $(Y,\psi)$ be a holomorphic quadratic differential in $Q\Tei_{2,0}$, which has two zeros of even order. 
According to Kra's theorem, there exists a holomorphic retraction 
$$F: \Tei_{2,0} \to \mathbb{H}, \  F \circ \tau^\psi= \operatorname{id}_{\mathbb{H}}.$$
As a consequence, $\tau^\phi$ admits a holomorphic retraction.

\subsection{Case (III): $\Q(1, -1^5)$.}\label{Case-III}
There are two subcases.

\subsection*{Case (III-1):} The zero of $\phi$ does not lie at a marked point. 
In this case, $\phi$ is a Jenkins-Strebel differential with a simple zero. There are two  possibilities, see Figure \ref{fig:III-1} for an illustration. The right one can be transform into the left one.

\begin{figure}[h]
\centering

\begin{tikzpicture}[line width=1pt,scale=0.8]
 
  \draw (-4.2,0) ellipse (1.5cm and 2cm);
  \node[ circle, fill=black,inner sep = 2pt] (A) at (-4.2, 2.0){};
  \draw (-4.2,2.0) to (-4.2,3.0);
      \node  at (-4.2, 3){$\times$};
   \draw (-4.7,0) to (-3.7,0);
    \node  at (-4.7, 0){$\times$};
     \node at (-3.7, 0){$\times$};
     \draw (-5.2,-3.3) to (-3.2, -3.3);
      \node at (-5.2, -3.3){$\times$};
     \node  at (-3.2, -3.3){$\times$};
     \draw[dashed] (-2,3)--(-2,-3.5);
     \draw (0,2.5)--(0,0)--(-1.5,-1.3);
        \draw (0,0)--(1.5,-1.3);
     \draw (-1.3,-2.7)--(1.8,-2.8);
\node at (0,2.5){$\times$};
     \node  at (1.5,-1.3){$\times$};

     \node at (-1.5,-1.3){$\times$};
     \node  at (-1.3,-2.7){$\times$};
     \node at (1.8,-2.8){$\times$};
\node[ circle, fill=black,inner sep = 2pt] (A) at (0,0){};
     
\end{tikzpicture}
\caption{Case (III-1).}
\label{fig:III-1}
\end{figure}
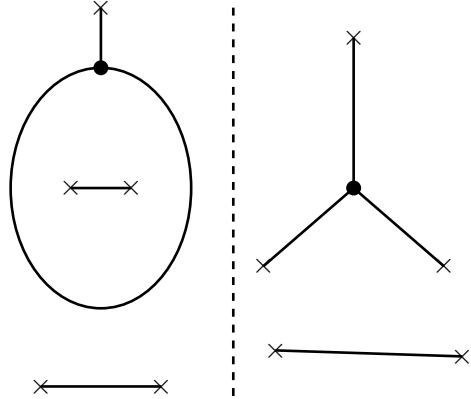

Let us assume that $\phi$ is the one on the left of Figure \ref{fig:III-1}.
In this situation, we shall consider the regular marked point on $S_{0,6}$ as a singularity of order $0$. That is,  $\phi$ is a quadratic differential in the stratum 
$\Q(1, -1^5, 0)$. 

There is a horizontal closed saddle connection $\gamma$ joining the zero to itself,
which separates the surface into two horizontal cylinders $\Pi_1$ and $\Pi_2$.  
We can use the $\mathbb{H}^2$-action to shear $\phi$ appropriately along the horizontal cylinders so that $\phi$ is a $L$-shaped pillowcase  with an extra marked point. See Figure \ref{fig:III-2} for an illustration, where we denote the zero by $z$, the poles by $p_1, \cdots, p_5$ and the extra marked point by $p$. 
Without loss of generality, we may assume that $p$ lies in the closure of $\Pi_2$.

\begin{figure}[h]
    \centering
   \begin{tikzpicture}[line width=1.5pt,scale=1.3]
        \node[circle, fill=black,inner sep = 2pt] (A) at (0,0){};
       \node  at(0,2){$\times$};
       \node  at(-1,2){$\times$};
      \node at(2,0){$\times$};
      \node at(2,-1){$\times$};
      \node at(-1,-1){$\times$};
      \draw (0,0) -- (2,0);
      \draw (0,0) -- (0,2);
      \draw (0,2) -- (-1,2);
      \draw (-1,2) -- (-1,-1);
      \draw (-1,-1) -- (2,-1);
      \draw (2,-1) -- (2,0);
       \node[below] at (-0.5,-0.2){$\gamma$};
       \draw (-1,0) to[out=315,in=225] (0,0);
         \draw[dashed] (-1,0) to[out=45,in=135] (0,0);
           \draw (0,-1) to[out=100,in=260] (0,0);
         \draw[dashed] (0,-1) to[out=280,in=80] (0,0);
          \node[circle, fill=blue,inner sep = 2pt] (A) at (1,-1){};
          \node[below] at (1,-1){$p$};
          \draw (1,-1) to[out=105,in=255] (1,0);
          \draw[dashed] (1,-1) to[out=285,in=75] (1,0);
      \node at (0.5,-0.5){$\Pi_2$};
      \node at (-0.5,1){$\Pi_1$};
      \node[right] at (0,0.15) {$z$};
       \node[right]  at (0,2) {$p_1$};
       \node[left]  at(-1,2){$p_2$};
      \node[right] at(2,0){$p_5$};
      \node[below] at(2,-1){$p_4$};
      \node[left] at(-1,-1){$p_3$}; 
   \end{tikzpicture}
   \caption{The Jenkins-Strebel differential in Case (III-1), which can be deformed into a L-shaped pillowcase with an extra marked point (the blue bullet) lying at the bottom.}
   \label{fig:III-2}
\end{figure}
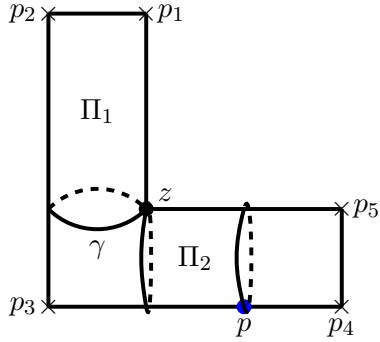

If $p$ lies in the interior of $\Pi_2$, then we can take the  horizontal closed trajectory of $\phi$
passing through $p$, which together with $\gamma$ separate the surface into three horizontal cylinders. By using the 
action of $\mathbb{H}^3$ on the horizontal cylinders, we can move the marked point $p$ such that it lies at the left side of the $L$-shaped polygon. Rotating $\phi$ by $\rot$, we can transform $\phi$ into a $L$-shaped pillowcase such that the marked point $p$ lies at the bottom of the polygon.

If $p$ lies at the saddle connection between $z$ and $p_5$ (but not at the saddle connection between $p_3$ and $p_4$), then we add the  vertical closed trajectory of $\phi$
passing through $p$ and consider $\phi$ as a Jenkins-Strebel differential with three vertical cylinders. Rotate $\phi$ by $\rot$ and apply the $\mathbb{H}^3$-action to $\rot \cdot \phi$
we can move $p$ to the bottom of $\phi$.

Finally, assume that the marked point $p$ lies at $\gamma$ (but not at the saddle connection between $p_2$ and $p_3$). We take the vertical closed leaf passing through $p$ and consider
$\phi$ as a Jenkins-Strebel differential with three vertical cylinders. 
Up to the $\rot$-action, we can again transform $\phi$ into a $L$-shaped pillowcase such that the marked point $p$ lies at the bottom of the polygon.

As a result, we can always assume that $\phi$ is the double of a $L$-shaped polygon
with an extra marked point lying at the bottom. We can deal with this case 
using Markovic's computation for the $L$-shaped polygon \cite{Markovic2018}.
This is the same as for the proof of \cite[Theorem 1.1]{Markovic2018}, which shows that 
 $\tau^\phi$ does not admit a holomorphic retraction. We omit the details.

\subsection*{Case (III-2):}  The zero of $\phi$ is a marked point. 
In this case, the pullback of  $\phi$ under $\eta^*$ gives a quadratic differential
$\psi$ with a single zero of order $4$ on $S_{2,0}$. By Kra's theorem,  $\tau^{\phi}$ admits a holomorphic retraction.

\subsection{Case (IV): $\Q(-1^4)$.}\label{Case-IV}
In this case, the pullback of  $\phi$ under $\eta^*$ produces a quadratic differential
$\psi$ with two zeros of order $2$ on $S_{2,0}$. By Kra's theorem,  $\tau^{\phi}$ admits a holomorphic retraction.

\subsection{Conclusion}

From the above discussion, to prove Theorem \ref{thm:main2}, it suffices to show that 
 the equation \eqref{equ:criterion} does not hold for  staircase Jenkins-Strebel differentials.  This will be addressed in Section \ref{sec:computation}.

To prove Theorem \ref{thm:main}, we can identify any quadratic differential 
$\psi$ on a genus two surface as the pullback of some $\phi\in Q\Tei_{0,6}$. 
Note that if $\psi$ has a zero of odd order, then either $\phi\in \mathcal{Q}(1^2,-1^6)$ or  $\phi\in \mathcal{Q}(1,-1^5)$. In the latter case,
the zero of $\phi$ could not be located at a marked point. Thus Theorem \ref{thm:main}
follows directly from Theorem \ref{thm:main2}.

\section{Staircase surfaces} \label{sec:computation}
\noindent

We have already shown in Section \ref{sec:classify}  that
the proof of Theorem \ref{thm:main2} can be reduced to staircase surfaces. 
In this section, we  show that for any  given staircase on $S_{0,6}$, the associated Teichm\"uller disk does not
admit a holomorphic retraction. 
Our result generalizes Markovic's previous work on $L$-shaped pillowcases \cite{Markovic2018}.

\begin{Remark}
  Markovic's computation shows that extremal length is not $C^2$ along certain smooth paths in  the space of measured foliations, see also Azemar \cite[Theorem 1.9]{Azemar}.
\end{Remark}

\subsection{Holomorphic retraction}\label{sec:area}

Let us begin with some notation. Assume that $a, b, c>0$ and $p,q>0, p+q<1$.
 Let $L(a,b,c,p,q)$ be a staircase-shaped polygon as shown in Figure \ref{fig:L1}.
 There are six vertices, denoted by
 $P_k, k=1, \cdots,6$, at which the interior angles are $\frac{\pi}{2}$. And there are another two vertices, denoted by $Q_1$ and $Q_2$, at which the interior angles are $\frac{3\pi}{2}$.

 Let $S(a,b,c,p,q)$ be the double of  $L(a,b,c,p,q)$,
  which is regarded as a Riemann sphere with  marked points $P_1, \cdots, P_6$.
Let $\phi(a,b,c,p,q)$ denote the quadratic differential on $S(a,b,c,p,q)$ obtained by 
gluing the two copies of $dz^2$ on $L(a,b,c,p,q)$. 
It is evident that $\phi(a,b,c,p,q)$ is a Jenkins-Strebel differential,
 with two simple zeros corresponding to $Q_1$ and  $Q_2$. The surface $S(a,b,c,p,q)$ is decomposed 
 by the horizontal critical graph of $\phi$ into three annuli $\Pi_1,\Pi_2$ and 
$\Pi_3$, each of which is swept out by horizontal closed  trajectories of $\phi(a,b,c,p,q)$.

\begin{figure}[h]
    \centering
    \begin{tikzpicture}[line width=1pt,scale=1]
        \draw (-5.7,0)--(-0.7,0)--(-0.7,1.2)--(-2.3,1.2)--(-2.3,2.4)--(-3.9,2.4)--(-3.9,3.6)--(-5.7,3.6)--(-5.7,0);
        \node at (-5.85,-0.15) {$P_6$};
        \node at (-0.55,-0.15){$P_1$};
        \node at (-0.55,1.35){$P_2$};
        \node at (-2.15,2.55){$P_3$};
        \node at (-3.75,3.75){$P_4$};
        \node at (-5.9,3.77){$P_5$};
        \node at (-2.0,1.4){$Q_1$};
        \node at (-3.6,2.6){$Q_2$};
        \draw[dashed] (-5.7,1.2)--(-2.3,1.2);
        \draw[dashed] (-5.7,2.4)--(-3.9,2.4);
        \node at (-3.2,-0.2){1};
        \node at (-1.5,1){1-p-q};
        \node at (-3,2.2){q};
        \node at (-4.7,3.8){p};
        \node at (-3.2,0.6){$\Pi_1$};
        \node at (-3.6,1.8){$\Pi_2$};
        \node at (-4.7,3){$\Pi_3$};
        \node at (-5.9,0.6){a};
        \node at (-5.9,1.8){b};
        \node at (-5.9,3){c};
    \end{tikzpicture}
    \caption{The polygon $L(a,b,c,p,q)$.}
    \label{fig:L1}
\end{figure}
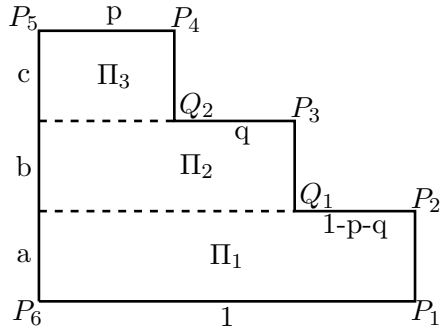


Fix $(a_0, b_0, c_0, p_0, q_0)$. We denote $S= S(a_0,b_0,c_0,p_0,q_0)$ and $\phi=\phi(a_0,b_0,c_0,p_0,q_0)$. 
The main result in this section is
 \begin{Theorem}\label{thm:typeA}
 The Teichm\"uller disk 
 $\tau^{\phi} : \mathbb{H} \to \Tei_{0,6}$ does not admit a holomorphic retraction.  
 \end{Theorem}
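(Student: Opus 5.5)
We argue by contradiction, following Markovic's treatment of $L$-shaped pillowcases \cite{Markovic2018}. Suppose $\tau^\phi$ admits a holomorphic retraction. By Theorem \ref{thm:GM-JS} there is a holomorphic $\Phi:\Tei_{0,6}\to\HP$ with $\Phi\circ\mathcal{E}^\phi(\lambda)=a_0\lambda_1+q_0'b_0\lambda_2+p_0c_0\lambda_3$. Here $a_1=a_0$, $a_2=(p_0+q_0)b_0$ and $a_3=p_0c_0$ are the cylinder areas of $L$, with circumferences $1,\,p+q,\,p$ for the doubled cylinders up to the factor $2$. Restricting $\lambda_j\in i\mathbb{R}_+$ shows that on the real $3$-parameter family of staircases with fixed $p_0,q_0$ and varying heights $a,b,c$, the function $\Phi$ is linear in the heights. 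Its imaginary part equals $a+(p_0+q_0)b+p_0c$ up to normalization, which is the area of $L(a,b,c,p_0,q_0)$.

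The central step is to compute $\Phi$ along a second family, in which the horizontal widths $p,q$ vary. Each staircase $S(a,b,c,p,q)$ is a point of $\Tei_{0,6}$, namely the sphere with marked points given by the images of $P_1,\dots,P_6$. We will use the Schwarz--Christoffel formula to write the map from the upper half-plane to $L(a,b,c,p,q)$. The prevertices are $x_1<\dots<x_6$ for the $P_k$ and $y_1,y_2$ for the $Q_i$, and the map is
\[
f(\zeta)=C\int^\zeta \frac{(w-y_1)^{1/2}(w-y_2)^{1/2}}{\prod_{k=1}^6 (w-x_k)^{1/2}}\,dw .
\]
Normalizing three of the $x_k$, the point of $\Tei_{0,6}$ is determined by the cross-ratios of the $x_k$. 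The side lengths $a,b,c,p,q,1$ are then explicit hyperelliptic period integrals in $(x_k,y_i)$.

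Consider a smooth path $t\mapsto X_t$ in $\Tei_{0,6}$ through $S$, for instance one moving a single prevertex. Along it we have $X_t=S(a(t),b(t),c(t),p(t),q(t))$, and the shape is in general not a staircase $\HP^3$-translate of $\phi$. The key observation, as in Markovic's argument, is that $\Phi$ is pinned down on a larger set than the polydisk. Since $\Phi$ is holomorphic and $\Phi\circ\mathcal{E}^\phi$ is linear, Schwarz--Pick combined with the extremal-length description of the Teichm\"uller norm yields the following: at every point of the polydisk, the differential $d\Phi$ equals (up to constant) the cotangent vector $\phi_\lambda\in Q(X_\lambda)$. This is because equality in the Schwarz lemma forces $d\Phi$ to be dual to the Teichm\"uller direction. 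Consequently $\operatorname{Im}\Phi$ agrees to first order with the real-analytic function $X\mapsto\operatorname{Area}$ of the Jenkins--Strebel differential with prescribed heights, that is, with an extremal-length type function $\sqrt{\mathrm{Ext}}$ (Hubbard--Masur). The plan is to evaluate the second derivative of $\operatorname{Im}\Phi$ along two transverse paths at $S$ and compare them.

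Concretely, we take a path $X_t$ leaving the staircase locus in the direction in which the zero $Q_1$ splits so that the horizontal foliation degenerates, for example making $q\to 0^{\pm}$ or letting the order of prevertices $y_1,x_3$ change. Using the Schwarz--Christoffel periods, we expand the relevant Jenkins--Strebel area to second order in $t$. The expected outcome is a term of the form $t^2\log|t|$, or asymmetric one-sided second derivatives, coming from the local expansion $\int (w-y)^{1/2}(w-x)^{-1/2}dw$ near the collision of a zero with a pole. This shows that the function forced to coincide with $\operatorname{Im}\Phi$ is not $C^2$, contradicting the smoothness of the holomorphic $\Phi$. The main obstacle is this step: extracting the non-smooth second-order term from the explicit period integrals, and verifying that the singular coefficient is nonzero for all $(a_0,b_0,c_0,p_0,q_0)$. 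We would try first to localize the computation near one zero, so that the staircase problem reduces to Markovic's $L$-shaped computation, where the nonvanishing is already known. We would then check that the contributions of the other two cylinders are real-analytic and cannot cancel the singular term.
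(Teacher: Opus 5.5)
Your proposal has a genuine gap: you never give a mechanism that determines $\Phi$, to second order, along a smooth path through a point where non-smoothness can occur.

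Nothing can go wrong near $S$ itself. The map $\mathcal{E}^\phi$ is an injective holomorphic map between complex $3$-manifolds, since $\dim_{\C}\Tei_{0,6}=3=k$. So its image is open, and $\Phi=(\sum a_j\lambda_j)\circ(\mathcal{E}^\phi)^{-1}$ is holomorphic there. Comparing second derivatives along paths through $S$ therefore cannot give a contradiction.

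Your Schwarz--Pick observation is correct: $\Phi\circ\tau^{\phi_\lambda}(\zeta)=\sum a_j\operatorname{Re}\lambda_j+\zeta\sum a_j\operatorname{Im}\lambda_j$ is a real affine map, so $d\Phi\propto\phi_\lambda$. But this is only first-order information at points of the polydisk. First-order agreement of $\operatorname{Im}\Phi$ with an extremal-length function cannot transfer a failure of $C^2$ to $\Phi$. In addition, once your path passes the collision of $Q_1$ with $P_3$, you have no formula for $\Phi$ at all. So the ``function forced to coincide with $\operatorname{Im}\Phi$'' is never actually forced to coincide with it along your path.

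The missing idea is to work at the degenerate boundary $b,c\to 0$, using the fact that the known formula applies only to staircases whose widths are exactly $(p_0,q_0)$. The paper proceeds as follows.
\begin{enumerate}
\item It extends $F(S(a,b,c,p_0,q_0))=a+b(p_0+q_0)+cp_0$ by continuity to $b,c\ge 0$ (Lemma \ref{Lemma 4.2}).
\item It takes the real-analytic family of degenerate rectangles $\TR(x,y)=S(a_0,0,0,p_0-x,q_0-y)$, whose widths are not $(p_0,q_0)$.
\item It shows that each $\TR(x,y)$ is the same point of $\Tei_{0,6}$ as a genuine staircase with widths exactly $(p_0,q_0)$. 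To do this it fixes the prevertices $-1,\xi_1,\xi_2,\xi_3,1$ of the marked points, so the Riemann surface is fixed, and slides the prevertices of $Q_1,Q_2$ to $\xi_1-s$ and $\xi_2-t$.
\item Under this slide the heights $b,c$ grow linearly in $s,t$, while the widths move at the rates $s\ln\frac1s$ and $t\ln\frac1t$, with positive coefficients $Q,P$ (Propositions \ref{prop:B-C}--\ref{prop:Q}).
\item Solving $p(s,t)=p_0$, $q(s,t)=q_0$ (Lemma \ref{lem:locate}) and inverting gives $s\sim\frac{x+y}{Q\ln\frac{1}{x+y}}$ and $t\sim\frac{x}{P\ln\frac1x}$.
\item Hence $F(\TR(x,y))$ contains $\pi\frac{x^2}{\ln(1/x)}+\pi\frac{(x+y)^2}{\ln(1/(x+y))}$. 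This is incompatible with the Taylor expansion of the smooth function $F\circ\TR$ at $(0,0)$, and only the one-sided region $x,y>0$ is needed.
\end{enumerate}

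Your remark that $\int(w-y)^{1/2}(w-x)^{-1/2}\,dw$ produces logarithmic terms is the right computational seed. Without this fixed-width re-identification, however, there is nothing for it to contradict. Also, the positivity of the singular coefficients comes out automatically ($\beta_{11}=\pi P^2$, $\beta_{22}=\pi Q^2$), so you need neither a reduction to the $L$-shaped case nor a cancellation check.
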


 To derive a contradiction, we assume that there exists a holomorphic retraction $$\Phi : \Tei_{0,6} \to \mathbb{H}$$ of the Teichm\"uller disk $\tau^{\phi}$. 
 Let $$\mathcal{E}:\mathbb{H}^3\rightarrow\Tei_{0,6}$$ be the Teichm\"uller polydisk associated with $\phi$. Define $$a_1=\frac{1}{{\|\phi\|}}\int_{\Pi_1}|\phi|=\frac{a_0}{a_0+b_0(p_0+q_0)+c_0p_0},$$
 $$a_2=\frac{1}{{\|\phi\|}}\int_{\Pi_2}|\phi|=\frac{b_0(p_0+q_0)}{a_0+b_0(p_0+q_0)+c_0p_0},$$ and
  $$ a_3=\frac{1}{{\|\phi\|}}\int_{\Pi_3}|\phi|=\frac{c_0p_0}{a_0+b_0(p_0+q_0)+cp_0}.$$
By Theorem \ref{thm:GM-JS}, we can assume that $\Phi$ satisfies  $$\left(\Phi\circ \mathcal{E}\right)(\lambda_1,\lambda_2,\lambda_3)=a_1\lambda_1+a_2\lambda_2+a_3\lambda_3$$
for all $(\lambda_1,\lambda_2,\lambda_3)\in \mathbb{H}^3$.

Note that for any $a,b,c>0$, we have $$\mathcal{E}\left(\frac{a}{a_0}i,\frac{b}{b_0}i,\frac{c}{c_0}i\right)=S(a,b,c,p_0,q_0).$$ 
Thus we have the following equality \begin{equation}\label{equ:linear}
 \Phi(S(a,b,c,p_0,q_0))=\frac{a+b(p_0+q_0)+cp_0}{a_0+b_0(p_0+q_0)+c_0p_0}i.
\end{equation}

\begin{Lemma}\label{Lemma 4.2} Assume that the Teichm\"uller disk 
 $\tau^{\phi} : \mathbb{H} \to \Tei_{0,6}$ admits a holomorphic retraction. Then
there exists a holomorphic function $F$ on $\Tei_{0,6}$ such that $$F\left(S(a,b,c,p_0,q_0)\right)= a+b(p_0+q_0)+cp_0 $$ for every $a>0$ and $b,c\geq 0$.
\end{Lemma}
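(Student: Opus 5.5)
The natural candidate is
$$F = -i\,\bigl(a_0+b_0(p_0+q_0)+c_0p_0\bigr)\,\Phi,$$
where $\Phi:\Tei_{0,6}\to\HP$ is the holomorphic retraction supplied by Theorem \ref{thm:GM-JS}. This $F$ is holomorphic on $\Tei_{0,6}$. By \eqref{equ:linear} it satisfies
$$F(S(a,b,c,p_0,q_0)) = a+b(p_0+q_0)+cp_0$$
whenever $a,b,c>0$. What remains is to extend the identity to the boundary cases $b=0$ or $c=0$, with $a>0$ still required.

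When $b=0$ or $c=0$, the polygon $L(a,b,c,p_0,q_0)$ is degenerate: a cylinder of height zero collapses. To give this meaning, I will interpret $S(a,b,c,p_0,q_0)$ as the double of the degenerate polygon, with the marked points kept distinct.

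For $c=0$, the top rectangle of width $p_0$ disappears. $P_4$ and $P_5$ then lie on the top edge of the new polygon, at the endpoints of a segment of length $p_0$ starting at the left corner. The surface is still a six-punctured sphere, and $P_4,P_5$ remain distinct because $p_0>0$. The quadratic differential becomes one with a zero of order two, or a zero at a marked point, but the underlying conformal structure is a genuine point of $\Tei_{0,6}$.

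The other degenerate cases work the same way. For $b=0$, $Q_1$ and $P_3$ together with the middle step flatten onto a horizontal line. Since $a>0$, the bottom rectangle always survives and the six points stay distinct.

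With this interpretation, I will show that the map $(a,b,c)\mapsto S(a,b,c,p_0,q_0)\in\Tei_{0,6}$ is continuous on $\{a>0,\ b,c\ge 0\}$. The cleanest route is the Schwarz--Christoffel description. $S(a,b,c,p_0,q_0)$ is the Riemann sphere with marked points the preimages of the vertices $P_k$ under the Schwarz--Christoffel map from the upper half-plane onto the polygon. As $b\to 0$ or $c\to 0$, the polygons converge in the Hausdorff sense, with a uniform lower bound on the distances between the vertices $P_k$. By Carath\'eodory kernel convergence, the normalized Riemann maps converge. Hence the six prevertices converge to six distinct points, which are exactly the prevertices of the degenerate polygon.

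Equivalently, one can exhibit explicit quasiconformal maps between nearby polygons, piecewise affine on the rectangles, whose dilatation tends to $1$. Shrinking a rectangle of height $\epsilon$ to height zero is not quasiconformal, though. So I will instead use the affine stretching only on the neighbouring rectangle of height $a$ or $b$, which absorbs the vanishing height. For example, collapse $\Pi_3$ by vertically rescaling the union $\Pi_2\cup\Pi_3$ onto the degenerate $\Pi_2$. Because $p_0<p_0+q_0$, this is bi-Lipschitz away from the corners, though it needs care near $Q_2$. That care is why I prefer the Riemann-map argument.

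Given continuity of the path, the identity at the boundary follows by continuity of $F$ and of the right-hand side $a+b(p_0+q_0)+cp_0$.

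I expect the main obstacle to be making precise that the degenerate surface is a well-defined point of $\Tei_{0,6}$, including its marking, and that convergence holds in $\Tei_{0,6}$ rather than merely in moduli space. The marking is handled by noting the family is connected and stays in a fixed chart: the markings are carried along continuously by the continuous movement of the prevertices, starting from the interior parameters. So I will set this up concretely: normalize three prevertices, say $P_6,P_1,P_5$, at $0,1,\infty$, and prove continuity of the remaining three prevertices as functions of $(a,b,c)$ up to the boundary.
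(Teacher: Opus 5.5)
Your proposal is correct and is the paper's argument: the same $F=\bigl(a_0+b_0(p_0+q_0)+c_0p_0\bigr)\Phi/i$ (note $1/i=-i$), the identity for $a,b,c>0$ from \eqref{equ:linear}, and extension to $b,c\ge 0$ by continuity of $F$. The paper only asserts that $(a,b,c)\mapsto S(a,b,c,p_0,q_0)$ extends continuously into $\Tei_{0,6}$, while you justify it; note that convergence of the prevertices needs uniform convergence of the Riemann maps up to the boundary, not just Carath\'eodory kernel convergence, and this holds here because the polygons are uniformly locally connected.
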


\begin{proof}
We take 
 $$F=\left( a_0+b_0(p_0+q_0)+c_0p_0\right) \frac{\Phi}{i},$$ which is a holomorphic function on $\Tei_{0,6}$. 
 By \eqref{equ:linear}, we have 
 $$F(S(a,b,c,p_0,q_0))= a+b(p_0+q_0)+cp_0$$ for any $a,b,c> 0$.
By continuity, the above equality holds for every $a>0$ and $b,c\geq 0$.
\end{proof}

Note that the function $F(S(a,b,c,p_0,q_0))$, for fixed $(p_0,q_0)$, is the area 
of  $L(a,b,c,p_0,q_0)$.

\subsection{Computation of area.}\label{SCM}
Let $f:\mathbb{H} \to L(a,b,c,p,q)$ be the Riemann mapping such that $f(\infty)=P_1, f(-1)=P_2, f(1)=P_6$. Assume that
$f(\xi_1)=P_3, f(\xi_2)=P_4, f(\xi_3)=P_5$, where $-1<\xi_1<\xi_2<\xi_3<1$. Assume that 
$f(\eta_1)=Q_1, f(\eta_2)=Q_2$,  where $-1<\eta_1\leq\xi_1<\eta_2\leq\xi_2$ (we allow $b, c$
to be $0$). 
By  the Schwarz-Christoffel formula, $f$ is given by $$f(z)=J\int_1^z\frac{\sqrt{w-\eta_1}\sqrt{w-\eta_2}}{\sqrt{w-\xi_1}\sqrt{w-\xi_2}\sqrt{w-\xi_3}\sqrt{w-1}\sqrt{w+1}}dw.$$
Since $|P_6P_1|=1$, we have  $$\frac{1}{J}=\int_1^{\infty}\frac{\sqrt{x-\eta_1}\sqrt{x-\eta_2}}{\sqrt{x-\xi_1}\sqrt{x-\xi_2}\sqrt{x-\xi_3}\sqrt{x-1}\sqrt{x+1}}dx.$$

Since $S(a,b,c,p,q)$ is the double of $L(a,b,c,p,q)$, we can extend $f$ to a conformal mapping $$f: \mathbb{C}\setminus
\{-1,\xi_1,\xi_2,\xi_3,1\}\rightarrow S(a,b,c,p,q).$$ 

\subsection*{Computing the Schwarz-Christoffel maps.}

Consider the case that  $-1<\eta_1=\xi_1<\eta_2=\xi_2<\xi_3<1$. Under the Schwarz-Christoffel map,  the corresponding polygon is $L(a,0,0,p,q)$, which is a rectangle with two additional marked points $P_3$ and $P_4$ on the top side. Let $s>0$ and  $t>0$ be sufficiently small.
We deform $L(a,0,0,p,q)$ into a new polygon $$\mathbf{L}(s,t)=L\left(a(s,t),b(s,t),c(s,t),p(s,t),q(s,t)\right)$$ in a way that
the Schwarz-Christoffel map  
sends $(\xi_1, \xi_2, \xi_3)$ into $(P_3, P_4, P_5)$ and $(\xi_1-s, \xi_2-t)$ into $(Q_1, Q_2)$.  
In other words, we have $\eta_1=\xi_1-s, \eta_2=\xi_2-t$. Let $$\mathbf{S}(s,t)=S\left(a(s,t),b(s,t),c(s,t),p(s,t),q(s,t)\right)$$ be the double of $\TL(s,t)$, with the natural Jenkins-Strebel differential
$$\phi(s,t)=\phi\left(a(s,t),b(s,t),c(s,t),p(s,t),q(s,t)\right)$$
induced by the flat metric on $\TL(s,t)$.  
Note that the Riemann surface structure induced by $\TS(s,t)$ is  equivalent to  $\mathbb{C}\setminus
\{-1,\xi_1,\xi_2,\xi_3,1\}$.
So we obtain a  two-parameter family of Jenkins-Strebel differentials 
that represent the same point in $\Tei_{0,6}$. 

To compute $a(s,t), b(s,t), c(s,t), p(s,t), q(s,t)$, we let
\begin{eqnarray*}
A(s,t)&=&-i\int_{-\infty}^{-1}\frac{\sqrt{x-\xi_1+s}\sqrt{x-\xi_2+t}}{\sqrt{x-1}\sqrt{x+1}\sqrt{x-\xi_1}\sqrt{x-\xi_2}\sqrt{x-\xi_3}}dx,\\
B(s,t)&=&-i\int_{\xi_1-s}^{\xi_1}\frac{\sqrt{x-\xi_1+s}\sqrt{x-\xi_2+t}}{\sqrt{x-1}\sqrt{x+1}\sqrt{x-\xi_1}\sqrt{x-\xi_2}\sqrt{x-\xi_3}}dx,\\
C(s,t)&=&-i\int_{\xi_2-t}^{\xi_2}\frac{\sqrt{x-\xi_1+s}\sqrt{x-\xi_2+t}}{\sqrt{x-1}\sqrt{x+1}\sqrt{x-\xi_1}\sqrt{x-\xi_2}\sqrt{x-\xi_3}}dx,\\
P(s,t)&=&-\int_{\xi_2}^{\xi_3}\frac{\sqrt{x-\xi_1+s}\sqrt{x-\xi_2+t}}{\sqrt{x-1}\sqrt{x+1}\sqrt{x-\xi_1}\sqrt{x-\xi_2}\sqrt{x-\xi_3}}dx,\\
Q(s,t)&=&-\int_{\xi_1}^{\xi_2-t}\frac{\sqrt{x-\xi_1+s}\sqrt{x-\xi_2+t}}{\sqrt{x-1}\sqrt{x+1}\sqrt{x-\xi_1}\sqrt{x-\xi_2}\sqrt{x-\xi_3}}dx,\\
\frac{1}{J(s,t)}&=&\int_{1}^{+\infty}\frac{\sqrt{x-\xi_1+s}\sqrt{x-\xi_2+t}}{\sqrt{x-1}\sqrt{x+1}\sqrt{x-\xi_1}\sqrt{x-\xi_2}\sqrt{x-\xi_3}}dx.
\end{eqnarray*}
It follows by definition that
\begin{eqnarray*}
  a(s,t) = {J(s,t)}{A(s,t)}, && 
  b(s,t) = {J(s,t)}{B(s,t)}, \ \ \ \ 
  c(s,t) ={J(s,t)}{C(s,t)}, \\
 p(s,t) = {J(s,t)}{P(s,t)}, &&   q(s,t)  = {J(s,t)}{Q(s,t)}.\\
\end{eqnarray*}

Both $J(s,t)$ and $A(s,t)$ are real analytic with respect to $(s,t)$,
 since their integrals depend analytically on $(s,t)$.
 
 \subsection*{Convention} We adopt the following convention. If two functions $h(s,t)$ and $g(s,t)$ satisfy $\frac{h(s,t)}{g(s,t)} \to 0$ as $(s,t)\to (0,0)$,
 then we denote $h(s,t)=o\left(g(s,t)\right)$.
 If there is a constant $C$ such that $\left|\frac{h(s,t)}{g(s,t)}\right| < C$, then
we denote $h(s,t)=O\left(g(s,t)\right)$.
 
\begin{Proposition}\label{prop:B-C}
With the about notation, we have for $s,t>0$ sufficiently small
    $$B(s,t)=\frac{\pi s}{2\sqrt{1-\xi_1}\sqrt{1+\xi_1}\sqrt{\xi_3-\xi_1}} +O(s^2)+O(st),$$
    $$C(s,t)=\frac{\pi t}{2\sqrt{1-\xi_2}\sqrt{1+\xi_2}\sqrt{\xi_3-\xi_2}}  +O(st)+ O(t^2).$$
\end{Proposition}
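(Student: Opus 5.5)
We rescale the short interval of integration to a fixed one and expand the smooth part of the integrand. For $B(s,t)$, substitute $x=\xi_1-s+su$ with $u\in[0,1]$. Then $x-\xi_1+s=su$ and $x-\xi_1=-s(1-u)$, so the singular part of the integrand becomes
$$\frac{\sqrt{x-\xi_1+s}}{\sqrt{x-\xi_1}}=\frac{\sqrt{u}}{\sqrt{-(1-u)}},$$
which is independent of $s$. The remaining factor is
$$g(x;t)=\frac{\sqrt{x-\xi_2+t}}{\sqrt{x-1}\sqrt{x+1}\sqrt{x-\xi_2}\sqrt{x-\xi_3}}.$$
It is real analytic in $(x,t)$ near $(\xi_1,0)$, since $x$ stays away from $\pm1,\xi_2,\xi_3$ and $x-\xi_2+t<0$ there. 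Hence
$$B(s,t)=-i\,s\int_0^1\frac{\sqrt u}{\sqrt{-(1-u)}}\,g(\xi_1-s+su;t)\,du .$$
The key identity is $\int_0^1\sqrt{u/(1-u)}\,du=B(3/2,1/2)=\pi/2$. Taylor expanding gives $g(\xi_1-s+su;t)=g(\xi_1;0)+O(s)+O(t)$, uniformly in $u$. The weight $\sqrt{u/(1-u)}$ is integrable, so the error terms contribute $O(s^2)+O(st)$.

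The main point requiring care is the branch bookkeeping, so that the leading constant is real, positive, and equal to $\frac{1}{\sqrt{1-\xi_1}\sqrt{1+\xi_1}\sqrt{\xi_3-\xi_1}}$. I would fix the branches of $\sqrt{\cdot}$ consistent with the Schwarz--Christoffel map on $\mathbb H$ (the argument of $w-\zeta$ lies in $[0,\pi]$, so $\sqrt{-r}=i\sqrt r$ for $r>0$). At $x=\xi_1$, counting the negative factors gives:
\begin{itemize}
\item the quotient $\sqrt{x-\xi_2+t}/\sqrt{x-\xi_2}$ is real, tending to $1$ as $t\to0$;
\item the factors $\sqrt{x-1}$, $\sqrt{x-\xi_3}$ and $\sqrt{x-\xi_1}$ each contribute a factor $i$;
\item the factor $\sqrt{x+1}$ is positive.
\end{itemize}
So $g(\xi_1;0)\cdot\frac{1}{i}=\frac{1}{i^2\sqrt{1-\xi_1}\sqrt{1+\xi_1}\sqrt{\xi_3-\xi_1}}$, up to the overall $-i$. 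One checks that the resulting sign makes $B>0$; this is forced anyway because $B/J=b$ is a height of the polygon $\TL(s,t)$. This yields the first formula.

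For $C(s,t)$ the argument is identical. Substitute $x=\xi_2-t+tv$, which makes $\sqrt{x-\xi_2+t}/\sqrt{x-\xi_2}=\sqrt{v}/\sqrt{-(1-v)}$. The remaining factor
$$\frac{\sqrt{x-\xi_1+s}}{\sqrt{x-\xi_1}\sqrt{x-1}\sqrt{x+1}\sqrt{x-\xi_3}}$$
is analytic near $(x,s)=(\xi_2,0)$ and tends to $1/(\sqrt{x-1}\sqrt{x+1}\sqrt{x-\xi_3})$ as $s\to0$, because $x-\xi_1>0$ there. Expanding this factor gives a leading value at $x=\xi_2$, $s=0$, plus errors $O(s)+O(t)$. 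After multiplication by $t\cdot\frac{\pi}{2}$ this produces the stated leading term with error $O(st)+O(t^2)$.
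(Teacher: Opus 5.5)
Your proposal is correct and is essentially the paper's argument. You freeze the smooth factor at $x=\xi_1$, $t=0$ (resp.\ $x=\xi_2$, $s=0$) and evaluate the model singular integral exactly to get $\pi s/2$ (resp.\ $\pi t/2$); you rescale to $[0,1]$ and use the Beta integral $\int_0^1\sqrt{u/(1-u)}\,du=\pi/2$, whereas the paper substitutes $y=\xi_1-x$ and uses the antiderivative $\sqrt{s-y}\sqrt{y}+s\arctan\bigl(\sqrt{y}/\sqrt{s-y}\bigr)$.

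There are two harmless slips. Consistent with your own bullet list, the display should read $g(\xi_1;0)\cdot\frac{1}{i}=\frac{1}{i^{3}\sqrt{1-\xi_1}\sqrt{1+\xi_1}\sqrt{\xi_3-\xi_1}}$, so that the prefactor $-i$ cancels $i^{-3}$ exactly and $B$ is real and positive. Also, the paper's relation is $b=JB$, not $B/J=b$.
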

\begin{proof}
For $s,t>0$ sufficiently small, we have  for  $x\in[\xi_1-s,\xi_1]$
\begin{eqnarray*}
 \frac{-i}{\sqrt{x-1}\sqrt{x+1}\sqrt{x-\xi_3}} &=& \frac{i}{\sqrt{1-\xi_1}\sqrt{1+\xi_1}\sqrt{\xi_3-\xi_1}}+ O(s), \\
 \frac{\sqrt{x-\xi_2+t}}{\sqrt{x-\xi_2}} &=& 1+ O(t).
\end{eqnarray*}
 So we can write
\begin{eqnarray*}
  B(s,t) &=& \left(\frac{i}{\sqrt{1-\xi_1}\sqrt{\xi_1+1}\sqrt{\xi_3-\xi_1}}+O(s) \right)\int_{\xi_1-s}^{\xi_1}\frac{\sqrt{x-\xi_1+s}\sqrt{x-\xi_2+t}}{\sqrt{x-\xi_1}\sqrt{x-\xi_2}}dx \\
  &=& \left(\frac{i}{\sqrt{1-\xi_1}\sqrt{\xi_1+1}\sqrt{\xi_3-\xi_1}}+O(s) \right)\left(1+O(t)\right)
  \int_{\xi_1-s}^{\xi_1}\frac{\sqrt{x-\xi_1+s}}{\sqrt{x-\xi_1}}dx.
\end{eqnarray*}
It remains to compute the integral 
$$\int_{\xi_1-s}^{\xi_1}\frac{\sqrt{x-\xi_1+s}}{\sqrt{x-\xi_1}}dx.$$ Note that 
$$\int_{\xi_1-s}^{\xi_1}\frac{\sqrt{x-\xi_1+s}}{\sqrt{x-\xi_1}}dx=-i\int_{\xi_1-s}^{\xi_1}\frac{\sqrt{x-\xi_1+s}}{\sqrt{\xi_1-x}}dx,$$
Substituting $\xi_1-x=y$ then yields 
\begin{eqnarray*}
 -i\int_{\xi_1-s}^{\xi_1}\frac{\sqrt{x-\xi_1+s}}{\sqrt{\xi_1-x}}dx&=&-i\int_0^s\frac{\sqrt{s-y}}{\sqrt{y}}dy\\&=&-i\left[ \sqrt{s-y}\sqrt{y}+s\arctan\left( \frac{\sqrt{y}}{\sqrt{s-y}}\right)\right]\bigg|_0^s
 \\&=& -i\frac{\pi s}{2} . 
\end{eqnarray*}
We conclude that 
\begin{eqnarray*}
  B(s,t) &=& \left(\frac{1}{\sqrt{1-\xi_1}\sqrt{\xi_1+1}\sqrt{\xi_3-\xi_1}}+O(s) \right)\left(1+O(t)\right) \frac{\pi s}{2} \\
  &=& \frac{\pi s}{2\sqrt{1-\xi_1}\sqrt{1+\xi_1}\sqrt{\xi_3-\xi_1}}+ O(s^2) + O(st).
\end{eqnarray*}
The proof for $C(s,t)$ is similar.
\end{proof}

Recall that 
\begin{eqnarray*}
  P(0,0) &=& - \int_{\xi_2}^{\xi_3}\frac{1}{\sqrt{x-1}\sqrt{x+1}\sqrt{x-\xi_3}} dx,  \\
  Q(0,0) &=& - \int_{\xi_1}^{\xi_2}\frac{1}{\sqrt{x-1}\sqrt{x+1}\sqrt{x-\xi_3}} dx.  \\
\end{eqnarray*}
\begin{Proposition}\label{prop:P}
With the about notation, we have 
    $$P(s,t)-P(0,0)=\frac{1}{2\sqrt{1-\xi_2}\sqrt{1+\xi_2}\sqrt{\xi_3-\xi_2}} \  t\ln \frac{1}{t}+
    o\left( t \ln \frac{1}{t} \right) +O(s).$$
\end{Proposition}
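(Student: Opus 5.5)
The plan is to isolate the $t$-dependence, because the $s$-dependence is harmless on $[\xi_2,\xi_3]$. On this interval the factor $\sqrt{x-\xi_1+s}/\sqrt{x-\xi_1}$ is real analytic in $s$, since $x-\xi_1\ge \xi_2-\xi_1>0$, and it equals $1+O(s)$ uniformly in $x$. Write
\[
g(x)=\frac{-1}{\sqrt{x-1}\sqrt{x+1}\sqrt{x-\xi_3}},
\]
with the branches fixed as in the definition of $P$. Then
\[
P(s,t)=\int_{\xi_2}^{\xi_3} g(x)\,\frac{\sqrt{x-\xi_2+t}}{\sqrt{x-\xi_2}}\,dx\,\bigl(1+O(s)\bigr)
\]
up to an $O(s)$ error. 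This uses the fact that $\int_{\xi_2}^{\xi_3}|g|\sqrt{(x-\xi_2+t)/(x-\xi_2)}\,dx$ is bounded uniformly in small $t$: the singularity at $\xi_3$ is integrable, and the ratio near $\xi_2$ is bounded by $1+\sqrt{t/(x-\xi_2)}$, which is integrable. So it suffices to show
\[
\int_{\xi_2}^{\xi_3} g(x)\Bigl(\frac{\sqrt{x-\xi_2+t}}{\sqrt{x-\xi_2}}-1\Bigr)dx = g(\xi_2)\,\frac{t}{2}\ln\frac1t + o\Bigl(t\ln\frac1t\Bigr),
\]
and then check that $g(\xi_2)$, with the correct branch choices, equals $1/\bigl(\sqrt{1-\xi_2}\sqrt{1+\xi_2}\sqrt{\xi_3-\xi_2}\bigr)$. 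The sign bookkeeping here is the same as in Proposition \ref{prop:B-C}: the product $\sqrt{x-1}\sqrt{x-\xi_3}$ contributes $i\cdot i=-1$.

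To get the $t\ln(1/t)$ term, substitute $y=x-\xi_2$ and split $g(\xi_2+y)=g(\xi_2)+\bigl(g(\xi_2+y)-g(\xi_2)\bigr)$.

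\textbf{Model integral.} The first piece is
\[
g(\xi_2)\int_0^{d}\Bigl(\sqrt{1+t/y}-1\Bigr)dy, \qquad d=\xi_3-\xi_2 .
\]
It can be computed in closed form, e.g.\ via $\int\sqrt{(y+t)/y}\,dy=\sqrt{y(y+t)}+t\ln(\sqrt y+\sqrt{y+t})$. Evaluating from $0$ to $d$ gives
\[
\sqrt{d(d+t)}-d+t\ln\bigl(\sqrt d+\sqrt{d+t}\bigr)-\tfrac{t}{2}\ln t=\tfrac{t}{2}\ln\tfrac1t+O(t).
\]

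\textbf{Remainder.} The second piece must be shown to be $o(t\ln(1/t))$. On $[0,d/2]$ the function $g$ is Lipschitz, so $|g(\xi_2+y)-g(\xi_2)|\le Cy$. Combined with $\sqrt{1+t/y}-1\le \min\bigl(\sqrt{t/y},\,t/(2y)\bigr)$, this bounds the contribution by $C\int_0^{d/2} y\cdot t/(2y)\,dy=O(t)$. On $[d/2,d]$, $g$ has only the integrable singularity $(\xi_3-x)^{-1/2}$ while $\sqrt{1+t/y}-1=O(t)$ uniformly, so this part is also $O(t)$.

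Adding the two pieces, together with the $O(s)$ error (where the cross term is $O(s)\cdot O(t\ln(1/t))\subset O(s)$), gives the stated expansion.

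The main obstacle is not the analysis, which is elementary once the splitting is made, but keeping the branches of the square roots consistent. This is what makes the leading coefficient come out real and positive, with the factor $\frac12$ rather than $1$. I would fix the branches exactly as in the proof of Proposition \ref{prop:B-C} and verify that the resulting constant agrees with the coefficient of $t$ in $C(s,t)$ divided by $\pi$, which serves as a consistency check.
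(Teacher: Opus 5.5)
Your proof is correct, but it controls the remainder differently from the paper.

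The paper splits $[\xi_2,\xi_3]$ at the $t$-dependent point $\xi_2+\epsilon(t)$, with $\epsilon(t)=1/\sqrt{\ln(1/t)}$. On the short piece it freezes the coefficient at $\xi_2$ with an $O(\epsilon)$ error and evaluates the same antiderivative you use. This gives a main term proportional to $t\ln\tfrac1t-t\ln\tfrac1\epsilon$. On the long piece it uses only the crude bound $O(s)+O(t/\epsilon)$ for the integrand factor. The cutoff is then chosen so that $\epsilon\, t\ln\tfrac1t$, $t\ln\tfrac1\epsilon$ and $t/\epsilon$ are all $o(t\ln\tfrac1t)$.

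You instead subtract $g(\xi_2)$ on the whole fixed interval. You then pair the Lipschitz bound $|g(\xi_2+y)-g(\xi_2)|\le Cy$ with $\sqrt{1+t/y}-1\le t/(2y)$. This makes the remainder $O(t)$ directly, with no $t$-dependent cutoff and no $\ln\ln$ bookkeeping. Your route is shorter and gives the sharper statement $P(s,t)-P(0,0)=\tfrac12 g(\xi_2)\,t\ln\tfrac1t+O(t)+O(s)$. The paper's scheme leaves hidden errors of size about $t\sqrt{\ln(1/t)}$. The stronger form is not needed later in the paper. The paper's cutoff approach mainly has the advantage of being the same template it reuses, with $N=\ln\tfrac1s$ and $M=\ln\tfrac1t$, in Proposition~\ref{prop:Q}. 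Your device would adapt there as well.

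One small correction to your closing remark. The branch choices only fix the sign: on $[\xi_2,\xi_3]$ both square-root ratios are positive and $g>0$. The factor $\tfrac12$ comes from the term $-\tfrac t2\ln t=t\ln(1/\sqrt t)$ in the antiderivative, not from the branches.
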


\begin{proof} By definition, 
\begin{align*}
P(s,t)-P(0,0)  = \int_{\xi_2}^{\xi_3}\frac{1}{\sqrt{x-1}\sqrt{x+1}\sqrt{x-\xi_3}}\left(1-\frac{\sqrt{x-\xi_1+s}\sqrt{x-\xi_2+t}}{\sqrt{x-\xi_1}\sqrt{x-\xi_2}}\right) dx.  
\end{align*}

Let $\epsilon=\epsilon(t)$ be a function (to be explicitly determined later) such that 
$$\epsilon(t) \to 0 \quad and \quad \frac{t}{\epsilon(t)} \to 0$$ as $t\to 0$ . 
Write the above integral as the sum
\begin{eqnarray*}
I_1+ I_2 &=& \int_{\xi_2}^{\xi_2+\epsilon}\frac{1}{\sqrt{x-1}\sqrt{x+1}\sqrt{x-\xi_3}}
\left(1-\frac{\sqrt{x-\xi_1+s}\sqrt{x-\xi_2+t}}{\sqrt{x-\xi_1}\sqrt{x-\xi_2}}\right)dx \\
&& + \int_{\xi_2+\epsilon}^{\xi_3}\frac{1}{\sqrt{x-1}\sqrt{x+1}\sqrt{x-\xi_3}}
\left(1-\frac{\sqrt{x-\xi_1+s}\sqrt{x-\xi_2+t}}{\sqrt{x-\xi_1}\sqrt{x-\xi_2}}\right)dx . 
\end{eqnarray*}

For the first intergal $I_1$, we have 
\begin{eqnarray}\label{equ:P}
\frac{1}{\sqrt{x-1}\sqrt{x+1}\sqrt{x-\xi_3}}
=\frac{-1}{\sqrt{1-\xi_2}\sqrt{1+\xi_2}\sqrt{\xi_3-\xi_2}}+O(\epsilon).
\end{eqnarray} 
for $\xi_2\leq x\leq \xi_2+\epsilon$. On the other hand, 
\begin{eqnarray}\label{equ:P0}
  && \int_{\xi_2}^{\xi_2+\epsilon} \left(1-\frac{\sqrt{x-\xi_1+s}\sqrt{x-\xi_2+t}}{\sqrt{x-\xi_1}\sqrt{x-\xi_2}} \right) dx \nonumber \\
  &=& \epsilon-\left(1+O(s)\right)
\int_{\xi_2}^{\xi_2+\epsilon}\frac{\sqrt{x-\xi_2+t}}{\sqrt {x-\xi_2}} dx .
\end{eqnarray}

Substituting $x-\xi_2=y$ yields

\begin{eqnarray}\label{equ:P1}
    && \int_{\xi_2}^{\xi_2+\epsilon}\frac{\sqrt{x-\xi_2+t}}{\sqrt {x-\xi_2}}dx  \nonumber  \\ 
    &=&\int_0^\epsilon\frac{\sqrt{y+t}}{\sqrt{y}}dy \nonumber  \\
    &=& \left( \sqrt{y}\sqrt{y+t}+t\ln (\sqrt{y}+\sqrt{y+t})\right)\mid_0^\epsilon \nonumber  \\
    &=&\sqrt{\epsilon}\sqrt{\epsilon+t}+t\ln \frac{(\sqrt{\epsilon}+\sqrt{\epsilon+t})}{\sqrt{t}}.  
    \end{eqnarray}

Combining \eqref{equ:P},\eqref{equ:P0} and \eqref{equ:P1},  the first integral satisfies
\begin{eqnarray*}
 I_1 &=& \int_{\xi_2}^{\xi_2+\epsilon}\frac{1}{\sqrt{x-1}\sqrt{x+1}\sqrt{x-\xi_3}}
\left(1-\frac{\sqrt{x-\xi_1+s}\sqrt{x-\xi_2+t}}{\sqrt{x-\xi_1}\sqrt{x-\xi_2}}\right)dx  \nonumber  \\
&=& \left( \frac{1}{\sqrt{1-\xi_2}\sqrt{1+\xi_2}\sqrt{\xi_3-\xi_2}} +O(\epsilon)\right) \left( \sqrt{\epsilon}\sqrt{\epsilon+t}-\epsilon+t\ln \frac{(\sqrt{\epsilon}+\sqrt{\epsilon+t})}{\sqrt{t}} \right) \nonumber \\
&&+ O(s) \left( \sqrt{\epsilon}\sqrt{\epsilon+t} +t\ln \frac{(\sqrt{\epsilon}+\sqrt{\epsilon+t})}{\sqrt{t}} \right) .
\end{eqnarray*}

Note that 
\begin{eqnarray*}
 \sqrt{\epsilon}\sqrt{\epsilon+t} &=& \epsilon + o(\epsilon), \\
  \sqrt{\epsilon}\sqrt{\epsilon+t}-\epsilon &=& \frac{t}{1+\sqrt{1+t/\epsilon}}= \frac{t}{2}+o(t),\\
  t\ln \frac{(\sqrt{\epsilon}+\sqrt{\epsilon+t})}{\sqrt{t}} &=&t\ln\left(\frac{\sqrt{\epsilon}}{\sqrt{t}}\left(1+\sqrt{1+\frac{t}{\epsilon}}\right)\right)\\&=& \frac{1}{2}t\ln \frac{1}{t} - \frac{1}{2}t\ln \frac{1}{\epsilon} + O\left(\frac{t^2}{\epsilon}\right)+t\ln 2. 
\end{eqnarray*}
Therefore, we obtain
\begin{eqnarray}\label{equ:P2}
I_1&=&\frac{1}{2\sqrt{1-\xi_2}\sqrt{1+\xi_2}\sqrt{\xi_3-\xi_2}} t\left(\ln \frac{1}{t}-\ln \frac{1}{\epsilon}\right)+ o\left(t\ln \frac{1}{t}\right)+O(s).
\end{eqnarray}

For the second integral $I_2$, we note that when $x\geq \xi_2+\epsilon$, 
\begin{eqnarray*}
 && 1-\frac{\sqrt{x-\xi_1+s}\sqrt{x-\xi_2+t}}{\sqrt{x-\xi_1}\sqrt{x-\xi_2}} \\
 &=& 1- \left(1+O(s)\right)\left( 1+ O\left(\frac{t}{\epsilon}\right)\right) \\
 &=& O(s)+ O \left( \frac{t}{\epsilon}\right) .
\end{eqnarray*} 
Thus, we have 
\begin{eqnarray}\label{equ:P3}
I_2&=&\int_{\xi_2+\epsilon}^{\xi_3}\frac{1}{\sqrt{x-1}\sqrt{x+1}\sqrt{x-\xi_3}}
\left(1-\frac{\sqrt{x-\xi_1+s}\sqrt{x-\xi_2+t}}{\sqrt{x-\xi_1}\sqrt{x-\xi_2}}\right)dx \nonumber \\
&=& O\left(s\right)+O\left(\frac{t}{\epsilon}\right).
\end{eqnarray}

To conclude our result, we take $$\epsilon=\frac{1}{\sqrt{\ln \frac{1}{t}}}.$$
Then by \eqref{equ:P2} and \eqref{equ:P3}, we have
 $$P(s,t)-P(0,0)=\frac{1}{2\sqrt{1-\xi_2}\sqrt{1+\xi_2}\sqrt{\xi_3-\xi_2}}t\ln \frac{1}{t}+o\left(t\ln \frac{1}{t}\right)+ O(s).$$
 \end{proof}

\begin{Proposition}\label{prop:Q}
    With the about notation, we have for $s,t>0$ sufficiently small
\begin{eqnarray*}
 && Q(s,t)-Q(0,0) \\
 &=& \frac{-1}{2\sqrt{1-\xi_2}\sqrt{1+\xi_2}\sqrt{\xi_3-\xi_2}} t \ln \frac{1}{t}+\frac{1}{2\sqrt{1-\xi_1}\sqrt{1+\xi_1}\sqrt{\xi_3-\xi_1}} s\ln \frac{1}{s}\\
  &&+o\left(s\ln \frac{1}{s}\right)+o\left(t\ln \frac{1}{t}\right)+  O(t)s\ln \frac{1}{s} + O\left(\frac{1}{\ln \frac{1}{s}}\right) t \ln \frac{1}{t}.
\end{eqnarray*}
\end{Proposition}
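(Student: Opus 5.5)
We write $Q(s,t)-Q(0,0)$ as a single integral over $[\xi_1,\xi_2]$ plus a boundary correction, and then localize near the two endpoints exactly as in the proof of Proposition \ref{prop:P}. Set
$$g(x)=\frac{1}{\sqrt{x-1}\sqrt{x+1}\sqrt{x-\xi_3}}, \qquad R(x)=\frac{\sqrt{x-\xi_1+s}\sqrt{x-\xi_2+t}}{\sqrt{x-\xi_1}\sqrt{x-\xi_2}}.$$
Then
$$Q(s,t)-Q(0,0)=\int_{\xi_1}^{\xi_2-t} g(x)\bigl(1-R(x)\bigr)\,dx+\int_{\xi_2-t}^{\xi_2} g(x)\,dx.$$
Since $g$ is bounded near $\xi_2$, the last term is $O(t)=o(t\ln\frac1t)$. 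On $(\xi_1,\xi_2-t)$ the factor $\sqrt{x-\xi_2+t}/\sqrt{x-\xi_2}$ is real and lies in $(0,1)$, because both radicands are negative with $|x-\xi_2+t|<|x-\xi_2|$. The factor $\sqrt{x-\xi_1+s}/\sqrt{x-\xi_1}$ is greater than $1$. This explains the opposite signs of the two logarithmic terms.

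Pick cutoffs $\epsilon_1=\epsilon(s)$ and $\epsilon_2=\epsilon(t)$ with $\epsilon(u)=1/\sqrt{\ln\frac1u}$, and split $[\xi_1,\xi_2-t]$ into three pieces: $J_1=[\xi_1,\xi_1+\epsilon_1]$, the middle piece $J_2$, and $J_3=[\xi_2-\epsilon_2,\xi_2-t]$.

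On $J_1$, replace $g$ by $g(\xi_1)+O(\epsilon_1)$. Note $g(\xi_1)<0$: the only negative radicand is $x-\xi_3$, so $g(\xi_1)=-1/(\sqrt{1-\xi_1}\sqrt{1+\xi_1}\sqrt{\xi_3-\xi_1})$ with the paper's branch convention. Replace the $\xi_2$-factor by $1+O(t)$. The remaining integral
$$\int_0^{\epsilon_1}\Bigl(1-\sqrt{\tfrac{y+s}{y}}\Bigr)\,dy$$
is computed exactly by \eqref{equ:P1}; it equals $-\tfrac12 s\ln\frac1s+\tfrac12 s\ln\frac1{\epsilon_1}+O(s)$. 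This yields
$$\frac{1}{2\sqrt{1-\xi_1}\sqrt{1+\xi_1}\sqrt{\xi_3-\xi_1}}\,s\ln\frac1s+o\Bigl(s\ln\frac1s\Bigr)+O(t)\,s\ln\frac1s.$$

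On $J_3$, substitute $y=\xi_2-x$. The $\xi_2$-factor becomes $\sqrt{y-t}/\sqrt{y}$, and we need
$$\int_t^{\epsilon_2}\Bigl(1-\sqrt{\tfrac{y-t}{y}}\Bigr)\,dy.$$
The antiderivative $\sqrt{y}\sqrt{y-t}-t\ln(\sqrt y+\sqrt{y-t})$ gives $\tfrac12 t\ln\frac1t+o(t\ln\frac1t)$. Combined with $g(\xi_2)<0$, this produces the term
$$-\frac{1}{2\sqrt{1-\xi_2}\sqrt{1+\xi_2}\sqrt{\xi_3-\xi_2}}\,t\ln\frac1t.$$
The $s$-factor on $J_3$ is $1+O(s)$, so its contribution is $O(s)\,t\ln\frac1t$, which we absorb as needed.

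On the middle piece $J_2$, use $1-R=O(s/\epsilon_1)+O(t/\epsilon_2)$ as in \eqref{equ:P3}. Both $s/\epsilon_1$ and $t/\epsilon_2$ are $o(s\ln\frac1s)$ and $o(t\ln\frac1t)$ respectively, since $1/\epsilon(u)=\sqrt{\ln\frac1u}=o(\ln\frac1u)$. The only remaining subtlety is that on $J_1$ we paid $O(\epsilon_1)\cdot t\ln\frac1t$-type cross errors when the two regimes interact. For instance, bounding the $t$-factor on $J_1$ or on $J_3$ only through the cutoff produces an error like $\epsilon(s)\,t\ln\frac1t=O\bigl(1/\sqrt{\ln\frac1s}\bigr)t\ln\frac1t$. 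This is the listed error term $O\bigl(1/\ln\frac1s\bigr)t\ln\frac1t$, up to adjusting the exponent in $\epsilon(s)$; if needed, take $\epsilon(s)=1/\ln\frac1s$.

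The main obstacle is bookkeeping these mixed error terms so that every cross term lands in one of the four stated error classes. This is most delicate on $J_3$ when $t$ is not small relative to $s$, and there we would keep the $s$-factor exact rather than expanding it.
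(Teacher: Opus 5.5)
Your argument is correct, but you organize it differently from the paper.

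\textbf{Your route.} You run the window scheme of Proposition \ref{prop:P} at both endpoints. Write $R=R_1R_2$ with $R_1=\sqrt{x-\xi_1+s}/\sqrt{x-\xi_1}$ and $R_2=\sqrt{x-\xi_2+t}/\sqrt{x-\xi_2}$. You use two disjoint shrinking windows $J_1=[\xi_1,\xi_1+\epsilon(s)]$ and $J_3=[\xi_2-\epsilon(t),\xi_2-t]$. On each window the ``other'' factor is uniformly $R_2=1+O(t)$, resp.\ $R_1=1+O(s)$. You extract both logarithms \emph{inside} the windows from exact antiderivatives, and the middle piece is negligible.

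\textbf{The paper's route.} It does the opposite. It cuts at $\xi_1+Ns$ with $N=\ln\frac1s$ and shows the near piece $I_3$ is only $O(s\ln\ln\frac1s)+O(t)\,s\ln\frac1s$. It then extracts $s\ln\frac1s$ and $t\ln\frac1t$ from the \emph{far} regions ($y\ge Ns$, resp.\ $y\ge Mt$), using $1-\sqrt{1\pm u/y}\approx \mp u/(2y)$ and integration by parts. The $t$-factor is integrated over all of $[\xi_1+Ns,\xi_2-t]$, where $R_1$ is only $1+O(1/N)$. This coupling, together with $I_3$, is what produces the error terms $O(t)\,s\ln\frac1s$ and $O(1/\ln\frac1s)\,t\ln\frac1t$.

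\textbf{What each buys.} Your decoupling gives cross terms of size only $O(t\,\epsilon(s))$, $O(s\,\epsilon(t))$, $O(t)\,s\ln\frac1s$ and $O(s)\,t\ln\frac1t$. Hence you get the cleaner remainder $o(s\ln\frac1s)+o(t\ln\frac1t)$. This implies the statement, because the two extra error classes there are themselves $o(s\ln\frac1s)$ and $o(t\ln\frac1t)$. The paper's version needs no exact antiderivative on the far part, and it shows that the logarithm accumulates over the whole range $s\ll y\lesssim 1$.

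\textbf{Bookkeeping to fix} (none of it is fatal):
\begin{enumerate}
\item On $J_1$, with $1-R_1R_2=(1-R_2)+R_2(1-R_1)$, you also pick up $\int_{J_1}g(1-R_2)=O(t\,\epsilon(s))$. This is not of the form $O(t)\,s\ln\frac1s$.
\item On $J_3$ you omit $\int_{J_3}g(1-R_1)=O(s\,\epsilon(t))$. Both this and the previous term are $o(\cdot)$ anyway.
\item The negativity of $g(\xi_1)$ comes from \emph{two} negative radicands, $x-1$ and $x-\xi_3$. A single one would make $g$ imaginary.
\item Your last paragraph worries about cross errors of size $\epsilon(s)\,t\ln\frac1t$, but these do not occur in your scheme: $R_1=1+O(s)$ on $J_3$ uniformly, whatever the relative size of $s$ and $t$.
\item The fallback you suggest there, $\epsilon(s)=1/\ln\frac1s$, would break your crude middle bound, since $s/\epsilon(s)$ becomes $s\ln\frac1s$. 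You would then need the sharper bound $O(s\ln\frac{1}{\epsilon(s)})$.
\end{enumerate}
Just drop that last paragraph.
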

\begin{proof} By the formula of $Q(s,t)$, we have 

\begin{eqnarray}\label{equ:Q1}
Q(s,t)-Q(0,0)&=&\int_{\xi_1}^{\xi_2-t}\frac{1}{\sqrt{x-1}\sqrt{x+1}\sqrt{x-\xi_3}}
\left(1-\frac{\sqrt{x-\xi_1+s}\sqrt{x-\xi_2+t}}{\sqrt{x-\xi_1}\sqrt{x-\xi_2}}\right)dx  \nonumber \\ 
&& +\int_{\xi_2-t}^{\xi_2}\frac{1}{\sqrt{x-1}\sqrt{x+1}\sqrt{x-\xi_3}}dx
\end{eqnarray}
Obviously, the second integral
 $$\int_{\xi_2-t}^{\xi_2}\frac{1}{\sqrt{x-1}\sqrt{x+1}\sqrt{x-\xi_3}}dx=O(t)=o\left(t \ln \frac{1}{t}\right).$$ 
It remains to study the first integral in \eqref{equ:Q1}.

Denote $$h(x)=\frac{1}{\sqrt{x-1}\sqrt{x+1}\sqrt{x-\xi_3}},$$ 
which is negative when $\xi_1\leq x\leq \xi_2-t$.

Let $N=\ln \frac{1}{s}$. We write the first integral in \eqref{equ:Q1} as
\begin{eqnarray}\label{equ:Q2}
 && \int_{\xi_1}^{\xi_2-t} h(x) \left(1-\frac{\sqrt{x-\xi_1+s}\sqrt{x-\xi_2+t}}{\sqrt{x-\xi_1}\sqrt{x-\xi_2}}\right)dx  \\
&=&  I_3+I_4 \nonumber \\
&=& \int_{\xi_1}^{\xi_1+Ns} h(x)\left(1-\frac{\sqrt{x-\xi_1+s}\sqrt{x-\xi_2+t}}{\sqrt{x-\xi_1}\sqrt{x-\xi_2}}\right)dx \nonumber \\ &&+\int_{\xi_1+Ns}^{\xi_2-t} h(x)\left(1-\frac{\sqrt{x-\xi_1+s}\sqrt{x-\xi_2+t}}{\sqrt{x-\xi_1}\sqrt{x-\xi_2}}\right)dx. \nonumber
\end{eqnarray}

For the integral $I_3$ in \eqref{equ:Q2}, we have 
\begin{eqnarray*}
  I_3 &=& \int_{\xi_1}^{\xi_1+Ns} h(x)\left(1-\frac{\sqrt{x-\xi_1+s}\sqrt{x-\xi_2+t}}{\sqrt{x-\xi_1}\sqrt{x-\xi_2}}\right)dx
   \\
   &=& \int_{\xi_1}^{\xi_1+Ns} h(x) \left(1-\frac{\sqrt{x-\xi_2+t}}{\sqrt{x-\xi_2}}\right)dx \\
   && +\int_{\xi_1}^{\xi_1+Ns} h(x)\frac{\sqrt{x-\xi_2+t}}{\sqrt{x-\xi_2}}\left(1-\frac{\sqrt{x-\xi_1+s}}{\sqrt{x-\xi_1}}\right)dx \\
   &=& O(t) \left( s \ln \frac{1}{s}\right) + \left(1+O(t)\right)\int_{\xi_1}^{\xi_1+Ns} h(x)\left(1-\frac{\sqrt{x-\xi_1+s}}{\sqrt{x-\xi_1}}\right)dx.
\end{eqnarray*}
Substituting $x-\xi_1=y$, we have 
\begin{eqnarray*}
&& \int_{\xi_1}^{\xi_1+Ns}h(x) \left(1-\frac{\sqrt{x-\xi_1+s}}{\sqrt{x-\xi_1}}\right)dx \\
&=& \left( h(\xi_1)+O\left(s\ln\frac{1}{s}\right) \right) \left( Ns-\int_0^{Ns}\frac{\sqrt{y+s}}{\sqrt{y}}dy \right)\\
&=& \left( h(\xi_1)+O\left(s\ln\frac{1}{s}\right) \right) \left( Ns-\left[\sqrt{y}\sqrt{y+s}+s\ln(\sqrt{y}+\sqrt{y+s})\right]|_0^{Ns} \right)\\
&=&  \left( h(\xi_1)+O\left(s\ln\frac{1}{s}\right) \right) \left( Ns-\sqrt{Ns}\sqrt{Ns+s}-s\ln\frac{(\sqrt{Ns}+\sqrt{Ns+s})}{\sqrt{s}} \right)\\
&=& \left( h(\xi_1)+O\left(s\ln\frac{1}{s}\right) \right) \left(N-\sqrt{N(N+1)}-\ln(\sqrt{N}+\sqrt{N+1})\right)s.
\end{eqnarray*}
Since $N-\sqrt{N(N+1)}$ is bounded and $\ln(\sqrt{N}+\sqrt{N+1})$ is approximately 
$\frac{1}{2} \ln \left(\ln \frac{1}{s}\right)$,
we conclude that  $$I_3= o\left(s \ln \frac{1}{s}\right)+ O(t) s \ln \frac{1}{s}.$$

For the integral $I_4$ in \eqref{equ:Q2}, we write
\begin{eqnarray*}
   I_4 &=& \int_{\xi_1+Ns}^{\xi_2-t} h(x)\left(1-\frac{\sqrt{x-\xi_1+s}\sqrt{x-\xi_2+t}}{\sqrt{x-\xi_1}\sqrt{x-\xi_2}}\right)dx \\
    &=& I_5+I_6, 
    \end{eqnarray*}
    where
  \begin{eqnarray*} 
  I_5 &=& \int_{\xi_1+Ns}^{\xi_2-t}h(x)\left(1-\frac{\sqrt{x-\xi_1+s}}{\sqrt{x-\xi_1}}\right)dx, \\
  I_6 &=& \int_{\xi_1+Ns}^{\xi_2-t}  h(x) \frac{\sqrt{x-\xi_1+s}}{\sqrt{x-\xi_1}} \left(1-\frac{\sqrt{x-\xi_2+t}}{\sqrt{x-\xi_2}}\right)dx.
\end{eqnarray*}
Substituting $x-\xi_1=y$, we have 
\begin{eqnarray*}
I_5 &=& \int_{\xi_1+Ns}^{\xi_2-t}h(x) \left(1-\frac{\sqrt{x-\xi_1+s}}{\sqrt{x-\xi_1}}\right)dx \\
&=& \int_{Ns}^{\xi_2-\xi_1-t} h(\xi_1+y) \left(1-\sqrt{1+\frac{s}{y}}\right) dy\\
&=& \frac{-1}{2} \int_{Ns}^{\xi_2-\xi_1-t} h(\xi_1+y) \left( \frac{s}{y}  +O\left(\frac{s^2}{y^2}\right)\right) dy\\
&=& \frac{-h(\xi_1)}{2}s\ln \frac{1}{s}+o\left(s\ln \frac{1}{s}\right),
\end{eqnarray*}
where the last equality follows with the aid of an integration by parts.

It remains to estimate 
$$I_6=\int_{\xi_1+Ns}^{\xi_2-t}  h(x) \frac{\sqrt{x-\xi_1+s}}{\sqrt{x-\xi_1}} \left(1-\frac{\sqrt{x-\xi_2+t}}{\sqrt{x-\xi_2}}\right)dx.$$
Note that  $\frac{\sqrt{x-\xi_1+s}}{\sqrt{x-\xi_1}}>0$
when  $\xi_1+Ns<x<\xi_2-t$.

Substituting $\xi_2-x=y$, we have 

\begin{eqnarray*}
&& \int_{\xi_1+Ns}^{\xi_2-t}  h(x) \frac{\sqrt{x-\xi_1+s}}{\sqrt{x-\xi_1}} \left(1-\frac{\sqrt{x-\xi_2+t}}{\sqrt{x-\xi_2}}\right)dx \\
&=&  \int_{t}^{\xi_2-\xi_1-Ns}  h(\xi_2-y) \sqrt{1+\frac{s}{\xi_2-\xi_1-y}}
 \left(1- \sqrt{1- \frac{t}{y}}\right)dy \\
 &=& \left( 1+ O\left(\frac{1}{N}\right)\right) \int_{t}^{\xi_2-\xi_1-Ns}  h(\xi_2-y)
 \left(1- \sqrt{1- \frac{t}{y}}\right)dy, 
\end{eqnarray*}
since
$$\sqrt{1+\frac{s}{\xi_2-\xi_1-y}}=1+ O\left(\frac{1}{N}\right).$$

Now we take $M=\ln \frac{1}{t}$ and write 
\begin{eqnarray*}
&& \int_{t}^{\xi_2-\xi_1-Ns}  h(\xi_2-y)
 \left(1- \sqrt{1- \frac{t}{y}}\right)dy \\
 & =& \int_{t}^{Mt}  h(\xi_2-y)
 \left(1- \sqrt{1- \frac{t}{y}}\right)dy+ \int_{Mt}^{\xi_2-\xi_1-Ns}  h(\xi_2-y)
 \left(1- \sqrt{1- \frac{t}{y}}\right)dy.
\end{eqnarray*}
Similar to the calculation of  $I_5$, the first term is $o\left(t\ln\frac{1}{t}\right)$. For the second term, we note that 
$$1- \sqrt{1- \frac{t}{y}} =  \frac{1}{2} \frac{t}{y} + O\left(  \frac{t^2}{y^2}\right).$$
Applying integration by parts, we obtain 
\begin{eqnarray*}
\int_{Mt}^{\xi_2-\xi_1-Ns}  h(\xi_2-y)
 \left(1- \sqrt{1- \frac{t}{y}}\right)dy &=& \frac{1}{2}h(\xi_2) t\ln\frac{1}{t}+ o\left(t\ln\frac{1}{t}\right).
\end{eqnarray*}
So we have 
$$I_6= \frac{1}{2}h(\xi_2) t \ln \frac{1}{t} + o(t\ln\frac{1}{t})+ O\left( \frac{1}{\ln\frac{1}{s}}\right) t\ln\frac{1}{t}.$$

We have shown that 
$$I_4= \frac{-h(\xi_1)}{2}s\ln \frac{1}{s}+ \frac{1}{2}h(\xi_2) t\ln\frac{1}{t}+ o\left(s\ln \frac{1}{s}\right)+o\left(t\ln\frac{1}{t}\right)+O\left( \frac{1}{\ln\frac{1}{s}}\right) t\ln\frac{1}{t}.$$
Now the proposition follows,  by noticing that $$h(\xi_1)=\frac{-1}{\sqrt{1-\xi_1}\sqrt{\xi_1+1}\sqrt{\xi_3-\xi_1}},h(\xi_2)= \frac{-1}{\sqrt{1-\xi_2}\sqrt{\xi_2+1}\sqrt{\xi_3-\xi_2}}.$$
\end{proof}

With the above computation, we are able to write down an expansion
 for the area of the polygon $L(a(s,t),b(s,t),c(s,t),p(s,t),q(s,t))$.
 We denote 
 \begin{eqnarray*}
P &=& J(0,0)\frac{1}{2\sqrt{1-\xi_2}\sqrt{1+\xi_2}\sqrt{\xi_3-\xi_2}}, \\
   Q &=& J(0,0) \frac{1}{2\sqrt{1-\xi_1}\sqrt{1+\xi_1}\sqrt{\xi_3-\xi_1}}. 
 \end{eqnarray*}
All of them are positive. 
\begin{Proposition}\label{prop:area}
    There are $\beta_1,\beta_2, \beta_{11}, \beta_{22}$ such that 
 \begin{eqnarray*}
   && \operatorname{Area}(L(a(s,t),b(s,t),c(s,t),p(s,t),q(s,t))) \\
    &=& a(0,0)+\beta_1t+\beta_2s
    +\beta_{11}t^2\ln \frac{1}{t}+\beta_{22}s^2\ln \frac{1}{s}\\
    && +o(st\ln \frac{1}{t})+o(t^2\ln \frac{1}{t}) +o(s^2\ln \frac{1}{s}),
\end{eqnarray*} 
where  $\beta_{11}=\pi P^2$ and $\beta_{22}=\pi Q^2$.
\end{Proposition}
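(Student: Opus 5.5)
We write the area in terms of the Schwarz--Christoffel data. From Figure \ref{fig:L1},
\[
\operatorname{Area}(\TL(s,t)) = a + b(p+q) + cp = J\,A + J^2\bigl(B(P+Q) + C P\bigr),
\]
with all quantities evaluated at $(s,t)$. Each term is then expanded with Propositions \ref{prop:B-C}, \ref{prop:P} and \ref{prop:Q}.

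\textbf{The analytic part $JA$.} The functions $J(s,t)$ and $A(s,t)$ are real analytic near $(0,0)$. Hence
\[
J A = a(0,0) + \alpha_1 t + \alpha_2 s + O(s^2 + st + t^2).
\]
The quadratic remainder is $o(t^2\ln\frac1t) + o(s^2\ln\frac1s) + o(st\ln\frac1t)$. The linear coefficients go into $\beta_1, \beta_2$.

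\textbf{The product $J^2 B(P+Q)$.} By Proposition \ref{prop:B-C},
\[
J B = J(0,0)\,\frac{\pi s}{2\sqrt{1-\xi_1}\sqrt{1+\xi_1}\sqrt{\xi_3-\xi_1}} + O(s^2+st) = \pi Q\, s + O(s^2+st).
\]
Adding Propositions \ref{prop:P} and \ref{prop:Q}, the $t\ln\frac1t$ terms cancel. This leaves
\[
P(s,t)+Q(s,t) = P(0,0)+Q(0,0) + \frac{Q}{J(0,0)}\, s\ln\frac1s + o\!\left(s\ln\frac1s\right) + o\!\left(t\ln\frac1t\right) + O(s) + \cdots .
\]
Multiplying by $J(s,t) = J(0,0)+O(s+t)$ gives
\[
J(P+Q) = \text{const} + Q\, s\ln\frac1s + o\!\left(s\ln\frac1s\right) + o\!\left(t\ln\frac1t\right) + O(s+t).
\]
Therefore
\[
J^2 B(P+Q) = \bigl(\pi Q s + O(s^2+st)\bigr)\Bigl(\text{const} + Q\, s\ln\frac1s + \cdots\Bigr).
\]
The constant times $\pi Q s$ is linear in $s$. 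The leading singular term is $\pi Q^2\, s^2\ln\frac1s$, which gives $\beta_{22}=\pi Q^2$. The cross terms are $s\cdot o(t\ln\frac1t) = o(st\ln\frac1t)$ and $s\cdot O(s+t)$, both negligible.

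\textbf{The product $J^2 CP$.} By the same proposition, $JC = \pi P\, t + O(st+t^2)$. By Proposition \ref{prop:P},
\[
JP(s,t) = \text{const} + P\, t\ln\frac1t + o\!\left(t\ln\frac1t\right) + O(s).
\]
The product therefore yields a linear term in $t$ and $\pi P^2\, t^2\ln\frac1t$, so $\beta_{11}=\pi P^2$. The remainders are $t\cdot O(s) = O(st)$ and $t\cdot o(t\ln\frac1t)$, both absorbed.

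\textbf{Main obstacle: the error terms.} The hardest part is checking that every error is absorbed into the three stated $o$-terms. Two remainders need care.

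\begin{itemize}
\item The term $O(t)\,s\ln\frac1s$ in Proposition \ref{prop:Q} becomes $O(s^2 t\ln\frac1s)$ after multiplying by $s$. This is fine, because it is $o(s^2\ln\frac1s)$.
\item The term $O(1/\ln\frac1s)\,t\ln\frac1t$ becomes $s\,t\ln\frac1t/\ln\frac1s$ after multiplying by $s$. This is $o(st\ln\frac1t)$.
\end{itemize}

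The $O(s)$ appearing in Propositions \ref{prop:P} and \ref{prop:Q} needs a closer look. It multiplies $Js = O(s)$ and $Jt = O(t)$, producing $O(s^2)$ and $O(st)$. These are acceptable provided they are genuinely bounded terms that do not hide a $\ln$. If a hidden logarithm did occur, the bound would become $O(st\ln)$ rather than $o$. To rule this out I would reinspect those $O(s)$ terms and confirm they come from analytic factors only.

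With these checks done, the expansion follows with $\beta_{11}=\pi P^2$ and $\beta_{22}=\pi Q^2$.
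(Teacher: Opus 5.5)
Your proposal is correct and is essentially the paper's argument: you write the area as $a+b(p+q)+cp$, insert the expansions of $a,b,c,p,q$ from Propositions \ref{prop:B-C}, \ref{prop:P} and \ref{prop:Q}, and multiply out, and your explicit observation that the $t\ln\frac1t$ terms cancel in $p+q$ is exactly what the paper uses implicitly to obtain an $o(st\ln\frac1t)$ remainder. Your closing worry is moot: by the paper's convention $O(s)$ means $|h/s|$ is bounded, so $O(s)\cdot t=O(st)=o(st\ln\frac1t)$ and $O(s)\cdot s=O(s^2)=o(s^2\ln\frac1s)$, with nothing further to check.
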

\begin{proof}
This is a consequence of Proposition \ref{prop:B-C}, Proposition \ref{prop:P} and 
Proposition \ref{prop:Q}. In fact, we have 
\begin{eqnarray*}
a(s,t) &=& a(0,0) + A_1s+ A_2t + O(s^2)+O(st)+O(t^2), \\
  b(s,t) &=& \pi Q s+O(s^2)+O(st), \\
  c(s,t) &=& \pi P t+O(t^2)+O(st), \\
  p(s,t)&=&p(0,0)+ P t\ln \frac{1}{t}+o(t\ln \frac{1}{t})+O(s),\\
  q(s,t)&=& q(0,0)-Pt\ln \frac{1}{t}+Qs\ln \frac{1}{s}+o(t\ln \frac{1}{t})+o(s\ln \frac{1}{s}) \\
  && + O(t) s \ln \frac{1}{s} + O\left(\frac{1}{\ln \frac{1}{s}}\right) t \ln \frac{1}{t}.
\end{eqnarray*}
A direct computation gives 

\begin{eqnarray*}
   && \operatorname{Area}(L(a(s,t),b(s,t),c(s,t),p(s,t),q(s,t))) \\
   &=& a(0,0)+\left(A_1+\pi Q p(0,0)+\pi Q q(0,0)\right)s + \left(A_2+\pi P p(0,0)\right)t \\
   &&
   + \pi Q^2 s^2\ln\frac{1}{s}+\pi P^2 t^2 \frac{1}{t} 
   + o(st\ln \frac{1}{t})+o(t^2\ln \frac{1}{t}) +o(s^2\ln \frac{1}{s}).
\end{eqnarray*}
\end{proof}

\subsection{Proof of Theorem \ref{thm:typeA}}\label{sec:proof}

Let $\phi=\phi(a_0,b_0,c_0,p_0,q_0)$, where $a_0,b_0,c_0>0$.
Assume for contradiction that the Teichm\"uller disk $\tau^{\phi}$ admits a holomorphic retraction.  Let $F$
be the holomorphic function in Lemma \ref{Lemma 4.2}. 

\begin{figure}
    \centering
    \begin{tikzpicture}[line width=1pt]
        \draw (0,0) rectangle (6,2);
        \draw [dashed](2,0)--(2,2);
        \draw [dashed] (3.5,0)--(3.5,2);
        \node at (3,-0.2) {1};
        \node at (6.2,1){a};
        \node at (1,2.3){p};
        \node at (2.75,2.3){q};
        \node at (4.75,2.3){1-p-q};
        \node at (1,1){$\Pi_{1,1}$};
        \node at (2.75,1){$\Pi_{1,2}$};
        \node at (4.75,1){$\Pi_{1,3}$};
    \end{tikzpicture}
    \caption{}
    \label{21}
\end{figure}

Consider the surface $S(a_0,0,0,p_0,q_0)$.  
  For small $x,y>0$, we let $$\mathbf{R}(x,y)=S(a_0,0,0,p_0-x,q_0-y)\in \Tei_{0,6}.$$ 
  
\begin{Lemma}\label{lemma 4.7}
The family of Riemann surfaces $\{\mathbf{R}(x,y)\}$ is real analytic with respect to the parameters $x$ and $y$.
\end{Lemma}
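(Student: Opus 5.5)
We will exhibit $\TR(x,y)$ explicitly as a family of points of $\Tei_{0,6}$ given by Beltrami differentials, or as marked points on the sphere, that depend real analytically on $(x,y)$. The surface $S(a_0,0,0,p_0-x,q_0-y)$ is the double of the rectangle $[0,1]\times[0,a_0]$. It has six marked points: the four corners $P_1,P_2,P_5,P_6$, and the two points $P_3,P_4$ on the top side, at horizontal positions $1-(p_0-x)-(q_0-y)$ and $1-(p_0-x)$ respectively. Only the two marked points on the top edge move as $(x,y)$ varies; the underlying pillowcase is fixed.

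The plan is to use the Schwarz--Christoffel picture of \S\ref{SCM} in the degenerate case $\eta_1=\xi_1$, $\eta_2=\xi_2$. The map
\[
f(z)=J\int_1^z\frac{dw}{\sqrt{w-\xi_3}\sqrt{w-1}\sqrt{w+1}}
\]
sends $\mathbb{H}$ onto a rectangle with $-1,\xi_3,1,\infty$ going to the corners. The points $\xi_1,\xi_2\in(-1,\xi_3)$ are the preimages of the two marked points on the top side.

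\begin{enumerate}
\item First fix $\xi_3$, together with the normalization, so that this rectangle has width $1$ and height $a_0$. This is possible because the modulus is a real analytic, strictly monotone function of $\xi_3$, so $\xi_3$ is independent of $(x,y)$.
\item On the side $[-1,\xi_3]$ the map $f$ restricts to a real analytic diffeomorphism $g$ onto the top side of the rectangle, with nonvanishing derivative in the interior.
\item Define $\xi_1(x,y)=g^{-1}\bigl(1-p_0-q_0+x+y\bigr)$ and $\xi_2(x,y)=g^{-1}\bigl(1-p_0+x\bigr)$. By the real analytic inverse function theorem, these are real analytic in $(x,y)$ near $(0,0)$; the target points stay in the open side since $0<p_0,q_0$ and $p_0+q_0<1$.
\end{enumerate}

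Doubling the rectangle, $\TR(x,y)$ is then represented by the sphere $\mathbb{C}\cup\{\infty\}$ with marked points $\{\infty,-1,\xi_1(x,y),\xi_2(x,y),\xi_3,1\}$, carrying a marking that varies continuously. Teichm\"uller space $\Tei_{0,6}$ is locally biholomorphic to the configuration space of the three free points $(\xi_1,\xi_2,\xi_3)$ once the other three are normalized. Since the coordinates $(\xi_1(x,y),\xi_2(x,y),\xi_3)$ are real analytic, the family is real analytic.

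The only step needing care is justifying that the marking is the continuous one, so that the local coordinate chart is the right one; this follows because the points never collide for small $(x,y)$.

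As an alternative route, one can write a quasiconformal map of the rectangle that is piecewise affine in the horizontal direction. It fixes the corners and moves the horizontal coordinates of $P_3$ and $P_4$. Its Beltrami coefficient is real analytic in $(x,y)$, and $L^\infty$-valued real analytic families of Beltrami differentials give real analytic paths in $\Tei_{0,6}$ via the complex structure of the Bers embedding.
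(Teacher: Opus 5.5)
Your proof is correct. The ``alternative route'' in your last paragraph is exactly the paper's argument. The paper realizes $\TR(x,y)$ as a quasiconformal deformation of $S(a_0,0,0,p_0,q_0)$ by horizontal affine stretches on the three vertical strips $\Pi_{1,1},\Pi_{1,2},\Pi_{1,3}$, of widths $p_0,q_0,1-p_0-q_0$. The stretches have constant coefficients $\mu_1=\frac{-x}{2p_0-x}$, $\mu_2=\frac{-y}{2q_0-y}$, $\mu_3=\frac{x+y}{2(1-p_0-q_0)+x+y}$, and the paper concludes from their real-analytic dependence on $(x,y)$.

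Your main route is genuinely different. You stay in the Schwarz--Christoffel coordinates of \S\ref{SCM}. There $\xi_3$ is frozen because the rectangle itself never changes. The points $\xi_1,\xi_2$ come from composing the real-analytic inverse of the boundary map $g$ with affine functions of $(x,y)$. Real analyticity then follows because $\Tei_{0,6}$ covers the ordered configuration space holomorphically.

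The trade-off is the marking. In the paper's route it is built in, since the point of $\Tei_{0,6}$ is defined by the quasiconformal map from the base surface. In yours it has to be supplied, and non-collision of the points does not by itself supply it. What you actually need is continuity of $(x,y)\mapsto\TR(x,y)$. Then $\TR$ equals a local holomorphic section of the covering composed with your real-analytic configuration family. That continuity holds because the marking of $\TR(x,y)$ is induced by a polygon homeomorphism, such as your piecewise affine map. Conjugated by the Schwarz--Christoffel maps and extended by reflection, this map commutes with complex conjugation, fixes $\infty,-1,\xi_3,1$, and slides $\xi_1,\xi_2$ along $(-1,\xi_3)$. So the lift is the continuous one.

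In exchange, your route gives explicit real-analytic control of the base point $(\xi_1,\xi_2,\xi_3)$ of $\TR(x,y)$. That is exactly the data on which $J(0,0)$, $P$, $Q$ and the error terms in Lemma \ref{lem:locate} depend, so it also shows these quantities vary continuously with $(x,y)$. The paper uses this tacitly.

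A minor point: your positions $1-p-q$ and $1-p$ for $P_3,P_4$ are distances from $P_2$, which is consistent with $g(-1)=P_2$. They are not horizontal coordinates measured from the left edge of the rectangle, but this is harmless.
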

 \begin{proof}
    We can obtain each
 $\TR(x,y)$ by a quasiconformal deformation of $S(a,0,0,p_0,q_0)$, with Beltrami differential
$\mu(x,y)$  given by $\mu_j(x,y)$ on the annulus $\Pi_{1,j}$ (see Figure \ref{21}),
satisfying 
$$\mu_1(x,y)=\frac{-x}{2p_0-x}, \mu_2(x,y)=\frac{-y}{2q_0-y}, \mu_3(x,y)=\frac{x+y}{2(1-p_0-q_0)+x+y}.$$

 It is obvious that the family of Beltrami differentials  $\{\mu(x,y)\}$ depends real-analytically on $x$ and $y$, so the lemma follows.


 \end{proof}

As we have shown, 
 $\TR(x,y)$ depends smoothly on $x$ and $y$, thus $F(\TR(x,y))$ defines a smooth function of $x$ and $y$. 

Now we claim that each $\TR(x,y)$ can be identified  with a certain surface
$$\TS(s,t)=S\left(a(s,t), b(s,t),c(s,t), p(s,t), q(s,t)\right),$$  which arises from 
the  deformation constructed in Section \ref{SCM}. 
We set $p=p_0-x, q=q_0-y$.

\begin{Lemma}\label{lem:locate}
For $x,y$ sufficiently small, we can locate $(s,t)$ such that the surface $$\TS(s,t)=S\left(a(s,t), b(s,t),c(s,t), p(s,t), q(s,t)\right),$$ which is the deformation of the base surface $\TS(0,0)=\TR(x,y)$,
satisfying 
$$p(s,t)=p_0, q(s,t)=q_0.$$
\end{Lemma}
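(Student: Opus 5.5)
We set up the construction of Section~\ref{SCM} with the base surface $\TR(x,y)=S(a_0,0,0,p,q)$, where $p=p_0-x$ and $q=q_0-y$, in place of $S(a,0,0,p,q)$. Concretely, let $(\xi_1(x,y),\xi_2(x,y),\xi_3(x,y))$ be the Schwarz--Christoffel parameters of the rectangle $L(a_0,0,0,p,q)$. We then consider the two-parameter family $\TS(s,t)$, obtained by moving the prevertices of $Q_1,Q_2$ to $\xi_1-s,\ \xi_2-t$ with the remaining prevertices fixed. Each $\TS(s,t)$ is conformally $\C\setminus\{-1,\xi_1,\xi_2,\xi_3,1\}$, so $\TS(s,t)=\TR(x,y)$ in $\Tei_{0,6}$. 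The task is to solve the system
\begin{equation*}
p(s,t)=p_0,\qquad q(s,t)=q_0
\end{equation*}
for small $s,t\ge 0$, given small $x,y>0$.

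First I would record the expansions from Propositions~\ref{prop:P} and~\ref{prop:Q}, multiplied by $J(s,t)$, which is real analytic with $J(s,t)=J(0,0)+O(s)+O(t)$. This gives
\begin{align*}
p(s,t)&=p+P\,t\ln\tfrac1t+o\bigl(t\ln\tfrac1t\bigr)+O(s),\\
q(s,t)&=q-P\,t\ln\tfrac1t+Q\,s\ln\tfrac1s+o\bigl(t\ln\tfrac1t\bigr)+o\bigl(s\ln\tfrac1s\bigr),
\end{align*}
where the constants $P,Q>0$ depend continuously on $(x,y)$ through the $\xi_i$. The error terms should be uniform for $(x,y)$ in a small neighbourhood, since all estimates only used bounds on $h$ and its derivatives near $\xi_1,\xi_2$.

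Next, change variables to $u=t\ln\frac1t$ and $v=s\ln\frac1s$. These are continuous, strictly increasing maps from $[0,\delta)$ onto $[0,\delta')$. In these variables the map $G(u,v)=(p(s,t)-p,\ q(s,t)-q)$ is continuous on a small closed square $[0,r]^2$ and satisfies
\begin{equation*}
G(u,v)=\begin{pmatrix}P&0\\-P&Q\end{pmatrix}\begin{pmatrix}u\\v\end{pmatrix}+o(|u|+|v|),
\end{equation*}
because the $O(s)$ term is $o(v)$. We need $G(u,v)=(x,x+y)$. The linear part is invertible and maps the positive quadrant onto the cone $\{(X,Y):X>0,\ X+Y\cdot(\text{positive})\dots\}$, that is, the image of $[0,\infty)^2$ is $\{X\ge 0,\ Y\ge -X\}$. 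Our target $(x,x+y)$, with $x,y>0$, lies strictly inside this cone, at an angle bounded away from its boundary when $x$ and $y$ are comparable.

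A degree (Brouwer) argument then gives a solution. Compose with the inverse linear map and consider $\tilde G=\text{id}+o(|w|)$ on a small square $W$ around the target preimage $w_0=A^{-1}(x,x+y)$. On the boundary of a ball of radius $c|w_0|$ around $w_0$ that stays inside the quadrant, the homotopy to the identity does not hit the target. Hence $\tilde G$ attains the target, which yields $(s,t)$ with $p(s,t)=p_0$ and $q(s,t)=q_0$.

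The main obstacle is that the map $(s,t)\mapsto(p,q)$ is not differentiable at the corner $(0,0)$ and is defined only for $s,t\ge0$, so the implicit function theorem is unavailable. The degree argument requires the target to be interior to the cone. I would therefore restrict to $x,y$ with $y/x$ in a compact subset of $(0,\infty)$, which suffices for the later $C^2$ contradiction along smooth paths. If the statement is needed for all small $x,y>0$, I would first try to establish monotonicity instead: $\partial_t p>0$ and $\partial_s q>0$ for $s,t>0$, obtained by differentiating under the integral sign. Solving $q=q_0$ for $s$ as a function of $t$, and then applying the intermediate value theorem to $p$, would then handle the remaining cases.
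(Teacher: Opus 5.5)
Your linearization is the paper's. You use the same expansions of $p(s,t)$ and $q(s,t)$, the same substitution (your $u,v$ are the paper's $v,u$), and the same observation that the linear part has an inverse with nonnegative entries. The paper stops at ``approximately $(Pv,-Pv+Qu)$'' and simply asserts that positive $u,v$ exist. Your fixed-point/degree argument on a ball around the linear preimage of the target, with the error terms uniform in the base point $\TR(x,y)$, is what actually justifies that step, since $(s,t)\mapsto(p,q)$ is merely continuous at the corner.

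There is one slip to correct. Because $p=p_0-x$ and $q=q_0-y$, the equations $p(s,t)=p_0$, $q(s,t)=q_0$ read $G(u,v)=(x,y)$, not $(x,x+y)$. In your ordering the linear preimage is $w_0=(x/P,\,(x+y)/Q)$. So a ball of radius comparable to $|w_0|$ fits in the quadrant as soon as $y\le Kx$ for a fixed $K$; no lower bound on $y/x$ is needed.

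Do not try to remove the cone restriction. The paper asserts the lemma for all small $x,y>0$, and later uses it when it lets $x\to0$ with $y$ fixed. That version fails when $x\ll y/\ln\frac1y$, because the $O(s)$ term in $p(s,t)-p$ is of exact order $s$ with a positive coefficient. At $t=0$ the $\xi_2$-factors cancel and
\[
p(s,0)=\frac{\int_{\xi_2}^{\xi_3}w_1(x)\sqrt{1+\frac{s}{x-\xi_1}}\,dx}{\int_{1}^{\infty}w_2(x)\sqrt{1+\frac{s}{x-\xi_1}}\,dx},\qquad w_1,w_2>0.
\]
Since $1/(x-\xi_1)$ is larger on $[\xi_2,\xi_3]$ than on $[1,\infty)$, this gives $\partial_s p(0,0)>0$.

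Reading off the integrands, $q=J(s,t)Q(s,t)$ is decreasing in $t$. Also $p=J(s,t)P(s,t)$ is increasing in $t$ for small $t$, since $\partial_tP(s,t)$ grows like $\ln\frac1t$ while $\partial_tJ$ is bounded. Hence $q(s,t)=q_0$ forces $y\le q(s,0)-q\approx Q\,s\ln\frac1s$, i.e.\ $s\gtrsim y/\ln\frac1y$. Then $x=p(s,t)-p\ge p(s,0)-p\gtrsim s$. So for $x\ll y/\ln\frac1y$, in particular for $x=0$, there is no solution with small $s,t\ge0$.

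Consequently the paper's step treats the $O(s)$ term as negligible against $x$, which is false in that regime. Your monotonicity/IVT fallback breaks down at the endpoint $t=0$ for the same reason: there $p(s(0),0)>p_0$, and $p$ only increases along the curve $s(t)$. The cone statement you prove is the one that is actually true. Any later use of the lemma must stay inside such a cone, e.g.\ along rays $y=kx$, rather than pass to the limit $x\to0$ with $y$ fixed.
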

\begin{proof}
By our previous computation, there are positive coefficients $P$ and $Q$ (depending on the base surface $\TR(x,y)$) such that 
\begin{eqnarray*}
\begin{cases}
  p(s,t)-p = \left(P+o(1)\right)  t\ln{\frac{1}{t}}+O(s), \\
 q(s,t)- q = \left(-P+o(1)\right)  t\ln{\frac{1}{t}}+ \left(Q+o(1)\right)  s\ln{\frac{1}{s}}.
\end{cases}
\end{eqnarray*}
We set
 \begin{eqnarray*}
\begin{cases}
 u = s\ln{\frac{1}{s}}, \\
 v = t\ln{\frac{1}{t}}.
\end{cases}
\end{eqnarray*}
Then the value of
 $\left( p(s,t)-p, q(s,t)-q\right)$
is approximately $\left( P v, -P v+ Qu \right)$. Note that the inverse of the matrix $\left[ {\begin{array}{cc}
    0 & P \\
    Q & -P \\
  \end{array} } \right]$ is  $\left[ {\begin{array}{cc}
    \frac{1}{Q} & \frac{1}{Q} \\
    \frac{1}{P} & 0 \\
  \end{array} } \right]$, which is positive. 
Thus for $x,y>0$ that are sufficiently small, there exist  $u>0, v>0$ (and then $s>0, t>0$) such that 
$$ p(s,t)-p=x, \quad q(s,t)-q =y.$$
This gives $p(s,t)=p_0, q(s,t)=q_0$.

\end{proof}

Now we are able to prove Theorem \ref{thm:typeA}. 
Using the same notation as in Lemma \ref{lem:locate}, we have 
 \begin{eqnarray*}
\left[ {\begin{array}{cc}
    x \\
    y \\
  \end{array} } \right] = \left[ {\begin{array}{cc}
    o(1) & P+ o(1)  \\
    Q +o(1) & -P +o(1) \\
  \end{array} } \right]  \left[ {\begin{array}{cc}
    u \\
    v \\
  \end{array} } \right]
\end{eqnarray*} 
   This gives 
      \begin{eqnarray*}
\left[ {\begin{array}{cc}
    u \\
    v \\
  \end{array} } \right] = \left[ {\begin{array}{cc}
    \frac{1}{Q}+o(1) & \frac{1}{Q}+o(1) \\
    \frac{1}{P}+o(1) & o(1) \\
  \end{array} } \right]  \left[ {\begin{array}{cc}
    x \\
    y \\
  \end{array} } \right]
\end{eqnarray*} 
     Recall that $u=s\ln \frac{1}{s}, v= t\ln \frac{1}{t}$.
     Thus $s=\frac{u}{\ln \frac{1}{s}}, t=\frac{v}{\ln \frac{1}{t}}$. 
     Since
     $$\ln{\frac{1}{s}}=(1+o(1))\ln{\frac{1}{u}}, \ln{\frac{1}{t}}=(1+o(1))\ln{\frac{1}{v}},$$
      we have 
       $$s=\frac{u}{(1+o(1))\ln{\frac{1}{u}}},t=\frac{v}{(1+o(1))\ln{\frac{1}{v}}}.$$
    As a consequence, we obtain
    \begin{eqnarray*}
      s &=& \frac{\left( \frac{1}{Q}+o(1)\right) x + \left( \frac{1}{Q}+o(1)\right) y}{\left(1+o(1)\right) \ln \left( \frac{1}{\left( \frac{1}{Q}+o(1)\right) x + \left( \frac{1}{Q}+o(1)\right) y}\right)} \\
      &=& \frac{\frac{x}{Q}+\frac{y}{Q}+o(x)+o(y)}{\ln \left(\frac{Q}{x+y}\right)} \\
      &=& \frac{x+y+o(x)+o(y)}{Q\ln\frac{1}{x+y}},
       \end{eqnarray*} 
      where the last equality holds since 
       $$\lim_{x+y \to 0} \frac{\ln \frac{Q}{x+y}}{\ln  \frac{1}{x+y}}=1.$$
      Similarly, we have  
         \begin{eqnarray*}
         t &=&  \frac{x + o(x)+o(y)}{P \ln \frac{1}{x}}.
    \end{eqnarray*}  
    
  By Proposition \ref{prop:area}, substituting $s$ and $t$ with $x$ and $y$, we  find that  
\begin{eqnarray}\label{equ:area}
   && \operatorname{Area}(L(a(s,t),b(s,t),c(s,t),p_0,q_0)) \nonumber \\
   &= & a(0,0)+\beta_1t+\beta_2s+\beta_{11}t^2\ln \frac{1}{t}+\beta_{22}s^2\ln \frac{1}{s} \nonumber\\
    && +o(st\ln \frac{1}{t})+o(t^2\ln \frac{1}{t}) +o(s^2\ln \frac{1}{s}) \nonumber\\
    &=& a(0,0)+\frac{\beta_1}{P} \frac{x+o(x)+o(y)}{\ln \frac{1}{x}} +\frac{\beta_2 }{Q} \frac{x+y+o(x)+o(y)}{\ln \left(\frac{1}{x+y}\right)}  \\
    && + \pi \frac{x^2}{\ln \frac{1}{x}}+\pi \frac{\left(x+y \right)^2}{\ln \frac{1}{x+y}} 
    +o\left(\frac{x^2+xy+y^2}{\ln \left(\frac{1}{x+y}\right)}\right)+o\left(\frac{x^2}{\ln \frac{1}{x}}\right). \nonumber
\end{eqnarray} 
Here we use   $\beta_{11}=\pi P^2$ and $\beta_{22}=\pi Q^2$.

On the other hand, we have   $\TR(x,y)\cong S(a(s,t),b(s,t),c(s,t),p_0,q_0)$ in $\Tei_{0,6}$. By Lemma \ref{Lemma 4.2},  $F(\TR(x,y))$ is the area of 
    $L\left(a(s,t),b(s,t),c(s,t),p_0,q_0)\right)$.
Note that  $a_0=a(0,0)$ and so $F(S(a_0,0,0,p_0,q_0))=a(0,0)$.
It turns out that \eqref{equ:area} is exactly the value of the function $F(\TR(x,y))$.

Since $F(\TR(x,y))$ is smooth, so is $\eqref{equ:area}$. Thus, we have 

\begin{eqnarray}\label{equ:taylor}
 &&  \frac{\beta_1}{P} \frac{x+o(x)+o(y)}{\ln \frac{1}{x}} +\frac{\beta_2 }{Q} \frac{x+y+o(x)+o(y)}{\ln \left(\frac{1}{x+y}\right)} \nonumber \\
    && + \pi \frac{x^2}{\ln \frac{1}{x}}+\pi \frac{\left(x+y \right)^2}{\ln \frac{1}{x+y}} 
    +o\left(\frac{x^2+xy+y^2}{\ln \left(\frac{1}{x+y}\right)}\right)+o\left(\frac{x^2}{\ln \frac{1}{x}}\right) \\
   &=& \delta_{1}x +\delta_{2}y + \delta_{11}x^2 + \delta_{12}xy + \delta_{22}y^2+ O\left((x+y)^3\right)\nonumber
\end{eqnarray}
If we take $x\to 0$, then we get 
$$\frac{\beta_2}{Q}\frac{y+o(y)}{\ln \frac{1}{y}}=\delta_2 y + \delta_{22}y^2+O\left(y^3\right).$$
Divide both sides by $y$ and take $y\to 0$, we obtain $\delta_2=0$. So we have 
$$\frac{\beta_2}{Q}\frac{y+o(y)}{\ln \frac{1}{y}}= \delta_{22}y^2+O\left(y^3\right).$$
Now we divide both sides by $y^2$ and take $y\to 0$ to conclude that $\beta_2=0$. 
As a consequence, the equation \eqref{equ:taylor} becomes 
\begin{eqnarray}\label{equ:taylor2}
 &&  \frac{\beta_1}{P} \frac{x+o(x)+o(y)}{\ln \frac{1}{x}} 
    + \pi \frac{x^2}{\ln \frac{1}{x}}+\pi \frac{\left(x+y \right)^2}{\ln \frac{1}{x+y}} \nonumber \\
     && +o\left(\frac{x^2+xy+y^2}{\ln \left(\frac{1}{x+y}\right)}\right)+o\left(\frac{x^2}{\ln \frac{1}{x}}\right) \\
   &=& \delta_{1}x + \delta_{11}x^2 + \delta_{12}xy + \delta_{22}y^2+ O\left((x+y)^3\right)\nonumber
\end{eqnarray}
If we take $y\to 0$, then we get 
\begin{eqnarray*}
 &&  \frac{\beta_1}{P} \frac{x+o(x)}{\ln \frac{1}{x}} 
    + 2\pi \frac{x^2}{\ln \frac{1}{x}}
     +o\left(\frac{x^2}{\ln \left(\frac{1}{x}\right)}\right) \\
   &=& \delta_{1}x + \delta_{11}x^2 + O\left(x^3\right).
\end{eqnarray*}
The same analysis as above shows  $\delta_1=\beta_1=0$. So the equation \eqref{equ:taylor2}
becomes 
\begin{eqnarray*}
 && \pi \frac{x^2}{\ln \frac{1}{x}}+\pi \frac{\left(x+y \right)^2}{\ln \frac{1}{x+y}} \nonumber \\
     && +o\left(\frac{x^2+xy+y^2}{\ln \left(\frac{1}{x+y}\right)}\right)+o\left(\frac{x^2}{\ln \frac{1}{x}}\right) \\
   &=&  \delta_{11}x^2 + \delta_{12}xy + \delta_{22}y^2+ O\left((x+y)^3\right)\nonumber
\end{eqnarray*}
By taking $x\to 0$, we deduce that  
\begin{eqnarray*}
 && \pi \frac{y^2}{\ln \frac{1}{y}} +o\left(\frac{y^2}{\ln \left(\frac{1}{y}\right)}\right) \\
   &=&  \delta_{22}y^2+ O\left(y^3\right).
\end{eqnarray*}
If we divide both sides  by $y^2/\ln \frac{1}{y}$, then we obtain $\delta_{22}=0$,
which is impossible.



This completes the proof of Theorem \ref{thm:typeA}.

\bibliographystyle{plain}

\bibliography{bibliography}

\end{document}